\begin{document}

\allowdisplaybreaks

%%%%%%%%%%%%%%%%%%%%%%%%%%%%%%%%%%%%%%%%%%%%%%%%%%%%%%%%%%%%%%%%%%%%%%
%% Use these to print supplementary material that is not to
%% be included in the final draft
\long\def\FOOTNOTE#1{ \ignorespaces}
%% \long\def\FOOTNOTE#1{\footnote{\emph{Added Note}: #1}}
%%%%%%%%%%%%%%%%%%%%%%%%%%%%%%%%%%%%%%%%%%%%%%%%%%%%%%%%%%%%%%%%%%%%%%

\newcommand{\TITLE}{Amicable pairs and aliquot cycles for elliptic curves}
\newcommand{\TITLERUNNING}{Aliquot cycles for elliptic curves}
\newcommand{\DATE}{\today}
\newcommand{\VERSION}{5}

%%%%%%%% Set Up Theorem-Style Formats %%%%%%%%%%%%%% 
\theoremstyle{plain} 
\newtheorem{theorem}{Theorem} 
\newtheorem{conjecture}[theorem]{Conjecture}
\newtheorem{proposition}[theorem]{Proposition}
\newtheorem{lemma}[theorem]{Lemma}
\newtheorem{corollary}[theorem]{Corollary}

\theoremstyle{definition}
% A * surpresses numbering, for example
% \newtheorem*{definition}{Definition}
\newtheorem*{definition}{Definition}

\theoremstyle{remark}
\newtheorem{remark}[theorem]{Remark}
\newtheorem{example}[theorem]{Example}
\newtheorem{question}[theorem]{Question}
\newtheorem*{acknowledgement}{Acknowledgements}

\def\BigStrut{\vphantom{$(^{(^(}_{(}$}} % Add space in tables

%%%%%%%% Set Up Environment for Notation %%%%%%%%%%%%%% 
% This is currently set to allow quite wide items to be defined 
\newenvironment{notation}[0]{%
  \begin{list}%
    {}%
    {\setlength{\itemindent}{0pt}
     \setlength{\labelwidth}{4\parindent}
     \setlength{\labelsep}{\parindent}
     \setlength{\leftmargin}{5\parindent}
     \setlength{\itemsep}{0pt}
     }%
   }%
  {\end{list}}

%%%%%%%% Set Up Environment for Parts in Theorems %%%%%%%%%%%%%% 
\newenvironment{parts}[0]{%
  \begin{list}{}%
    {\setlength{\itemindent}{0pt}
     \setlength{\labelwidth}{1.5\parindent}
     \setlength{\labelsep}{.5\parindent}
     \setlength{\leftmargin}{2\parindent}
     \setlength{\itemsep}{0pt}
     }%
   }%
  {\end{list}}
% Use \Part{(a)}, instead of \item[(a)], to ensure upright font 
\newcommand{\Part}[1]{\item[\upshape#1]}

%%%%%%% PROOF ENDING AT DISPLAYED EQUATION %%%%%%%% 
% If a proof ends with a displayed equation, put the command 
% \qedtag just before the final \] (or, e.g., the final \end{align}) 
% and put the command \EndProofAtDisplay just before the \end{proof}.
%
\newcommand{\EndProofAtDisplay}{\renewcommand{\qedsymbol}{}}
\newcommand{\qedtag}{\tag*{\qedsymbol}}
%%%%%%%%%%%%%%%%%%%%%%%%%%%%%%%%%%%%%%%%%%%%%%%%%%%

%%%%%%%%%%%%%%%%%%
% Greek Alphabet %
%%%%%%%%%%%%%%%%%%
\renewcommand{\a}{\alpha}
\renewcommand{\b}{\beta}
\newcommand{\g}{\gamma}
\renewcommand{\d}{\delta}
\newcommand{\e}{\epsilon}
\newcommand{\f}{\phi}
\newcommand{\fhat}{{\hat\phi}}
\renewcommand{\l}{\lambda}
\renewcommand{\k}{\kappa}
\newcommand{\lhat}{\hat\lambda}
\newcommand{\m}{\mu}
\renewcommand{\o}{\omega}
\renewcommand{\r}{\rho}
\newcommand{\rbar}{{\bar\rho}}
\newcommand{\s}{\sigma}
\newcommand{\sbar}{{\bar\sigma}}
\renewcommand{\t}{\tau}
\newcommand{\z}{\zeta}

\newcommand{\D}{\Delta}
\newcommand{\F}{\Phi}
\newcommand{\G}{\Gamma}

%%%%%%%%%%%%%%%%%%%%
% Fraktur Alphabet %
%%%%%%%%%%%%%%%%%%%%
\newcommand{\ga}{{\mathfrak{a}}}
\newcommand{\gb}{{\mathfrak{b}}}
\newcommand{\gc}{{\mathfrak{c}}}
\newcommand{\gd}{{\mathfrak{d}}}
\newcommand{\gk}{{\mathfrak{k}}}
\newcommand{\gm}{{\mathfrak{m}}}
\newcommand{\gn}{{\mathfrak{n}}}
\newcommand{\gp}{{\mathfrak{p}}}
\newcommand{\gq}{{\mathfrak{q}}}
\newcommand{\gI}{{\mathfrak{I}}}
\newcommand{\gJ}{{\mathfrak{J}}}
\newcommand{\gK}{{\mathfrak{K}}}
\newcommand{\gM}{{\mathfrak{M}}}
\newcommand{\gN}{{\mathfrak{N}}}
\newcommand{\gP}{{\mathfrak{P}}}
\newcommand{\gQ}{{\mathfrak{Q}}}

%%%%%%%%%%%%%%%%%%%%%%%%%
% Calligraphic Alphabet %
%%%%%%%%%%%%%%%%%%%%%%%%%
\def\Acal{{\mathcal A}}
\def\Bcal{{\mathcal B}}
\def\Ccal{{\mathcal C}}
\def\Dcal{{\mathcal D}}
\def\Ecal{{\mathcal E}}
\def\Fcal{{\mathcal F}}
\def\Gcal{{\mathcal G}}
\def\Hcal{{\mathcal H}}
\def\Ical{{\mathcal I}}
\def\Jcal{{\mathcal J}}
\def\Kcal{{\mathcal K}}
\def\Lcal{{\mathcal L}}
\def\Mcal{{\mathcal M}}
\def\Ncal{{\mathcal N}}
\def\Ocal{{\mathcal O}}
\def\Pcal{{\mathcal P}}
\def\Qcal{{\mathcal Q}}
\def\Rcal{{\mathcal R}}
\def\Scal{{\mathcal S}}
\def\Tcal{{\mathcal T}}
\def\Ucal{{\mathcal U}}
\def\Vcal{{\mathcal V}}
\def\Wcal{{\mathcal W}}
\def\Xcal{{\mathcal X}}
\def\Ycal{{\mathcal Y}}
\def\Zcal{{\mathcal Z}}

%%%%%%%%%%%%%%%%%%%%%%%%%%%%
% Blackboard Bold Alphabet %
%%%%%%%%%%%%%%%%%%%%%%%%%%%%
\renewcommand{\AA}{\mathbb{A}}
\newcommand{\BB}{\mathbb{B}}
\newcommand{\CC}{\mathbb{C}}
\newcommand{\FF}{\mathbb{F}}
\newcommand{\GG}{\mathbb{G}}
\newcommand{\NN}{\mathbb{N}}
\newcommand{\PP}{\mathbb{P}}
\newcommand{\QQ}{\mathbb{Q}}
\newcommand{\RR}{\mathbb{R}}
\newcommand{\ZZ}{\mathbb{Z}}

%%%%%%%%%%%%%%%%%%%%%%%%%%
% Boldface Math Alphabet %
%%%%%%%%%%%%%%%%%%%%%%%%%%
\def \bfa{{\mathbf a}}
\def \bfb{{\mathbf b}}
\def \bfc{{\mathbf c}}
\def \bfe{{\mathbf e}}
\def \bff{{\mathbf f}}
\def \bfF{{\mathbf F}}
\def \bfg{{\mathbf g}}
\def \bfn{{\mathbf n}}
\def \bfp{{\mathbf p}}
\def \bfr{{\mathbf r}}
\def \bfs{{\mathbf s}}
\def \bft{{\mathbf t}}
\def \bfu{{\mathbf u}}
\def \bfv{{\mathbf v}}
\def \bfw{{\mathbf w}}
\def \bfx{{\mathbf x}}
\def \bfy{{\mathbf y}}
\def \bfz{{\mathbf z}}
\def \bfX{{\mathbf X}}
\def \bfU{{\mathbf U}}
\def \bfmu{{\boldsymbol\mu}}

%%%%%%%%%%%%%%%%%%%%%%%%
% Barred Math Alphabet %
%%%%%%%%%%%%%%%%%%%%%%%%
\newcommand{\Gbar}{{\bar G}}
\newcommand{\Kbar}{{\bar K}}
\newcommand{\kbar}{{\bar k}}
\newcommand{\Obar}{{\bar O}}
\newcommand{\Pbar}{{\bar P}}
\newcommand{\Rbar}{{\bar R}}
\newcommand{\Qbar}{{\bar{\QQ}}}

%%%%%%%%%%%%%%%%%%%%%%%%%%%%%%
% Miscellaneous New Commands %
%%%%%%%%%%%%%%%%%%%%%%%%%%%%%%

\newcommand{\Aut}{\operatorname{Aut}}
\newcommand{\Ctilde}{{\tilde C}}
\newcommand{\defeq}{\stackrel{\textup{def}}{=}}
\newcommand{\Disc}{\operatorname{Disc}}
\renewcommand{\div}{\operatorname{div}}
\newcommand{\Div}{\operatorname{Div}}
\newcommand{\et}{{\text{\'et}}}  %% subscript for etale
\newcommand{\Etilde}{{\tilde E}}
\newcommand{\End}{\operatorname{End}}
\newcommand{\EDS}{\mathsf{W}}
\newcommand{\Fix}{\operatorname{Fix}}
\newcommand{\Frob}{\operatorname{Frob}}
\newcommand{\Gal}{\operatorname{Gal}}
\newcommand{\GCD}{{\operatorname{GCD}}}
\renewcommand{\gcd}{{\operatorname{gcd}}}
\newcommand{\GL}{\operatorname{GL}}
\newcommand{\hhat}{{\hat h}}
\newcommand{\Hom}{\operatorname{Hom}}
\newcommand{\Ideal}{\operatorname{Ideal}}
\newcommand{\Image}{\operatorname{Image}}
\newcommand{\Jac}{\operatorname{Jac}}
\newcommand{\longhookrightarrow}{\lhook\joinrel\relbar\joinrel\rightarrow}
\newcommand{\MOD}[1]{~(\textup{mod}~#1)}
\renewcommand{\pmod}{\MOD}
\newcommand{\Norm}{\operatorname{N}}
\newcommand{\NS}{\operatorname{NS}}
\newcommand{\notdivide}{\nmid}
\newcommand{\OK}[1]{\Ocal_{K,#1}^{\,\sharp}}
\newcommand{\longonto}{\longrightarrow\hspace{-10pt}\rightarrow}
\newcommand{\onto}{\rightarrow\hspace{-10pt}\rightarrow}
\newcommand{\ord}{\operatorname{ord}}
\newcommand{\Parity}{\operatorname{Parity}}
\newcommand{\pequiv}{\stackrel{pr}{\equiv}}
\newcommand{\Pic}{\operatorname{Pic}}
\newcommand{\Prob}{\operatorname{Prob}}
\newcommand{\Proj}{\operatorname{Proj}}
\newcommand{\rank}{\operatorname{rank}}
\newcommand{\res}{\operatornamewithlimits{res}}
\newcommand{\Resultant}{\operatorname{Resultant}}
\renewcommand{\setminus}{\smallsetminus}
\newcommand{\stilde}{{\tilde\sigma}}
\newcommand{\sign}{\operatorname{Sign}}
\newcommand{\Spec}{\operatorname{Spec}}
\newcommand{\Support}{\operatorname{Support}}
\newcommand{\tors}{{\textup{tors}}}
\newcommand{\Tr}{\operatorname{Tr}}
\newcommand{\Trace}{\operatorname{Tr}}
\newcommand\W{W^{\vphantom{1}}}
\newcommand{\Wtilde}{{\widetilde W}}
\newcommand{\<}{\langle}
\renewcommand{\>}{\rangle}
\newcommand{\semiquad}{\hspace{.5em}}

\newcommand{\CS}[2]{\genfrac(){}{}{#1}{#2}_3}  % cubic residue symbol 
\newcommand{\FCS}[2]{\genfrac[]{}{}{#1}{#2}_3}  % cubic residue symbol in F_k
\newcommand{\LS}[2]{\genfrac(){}{}{#1}{#2}}  % Legendre symbol 
\newcommand{\FLS}[2]{\genfrac[]{}{}{#1}{#2}}  % Legendre symbol in F_k
\renewcommand{\SS}[2]{\genfrac(){}{}{#1}{#2}_6}  % sextic residue symbol 

\let\hw\hidewidth

\hyphenation{para-me-tri-za-tion}

%%%%%%%%%%%%%%%  Topmatter %%%%%%%%%%%%%%%%%%

\title[\TITLERUNNING]{\TITLE}
\date{\DATE, Draft \#\VERSION}
\author{Joseph H. Silverman}
\address{Mathematics Department, Box 1917, Brown University, 
Providence, RI 02912 USA} 
\email{jhs@math.brown.edu} 
\author{Katherine E. Stange} 
\address{%
Department of Mathematics, Simon Fraser University,
8888 University Drive, Burnaby, BC, Canada V5A 1S6,
and 
Pacific Institute for the Mathematical Sciences,
200 1933 West Mall, Vancouver, BC, Canada V6T 1Z2}
\email{kestange@pims.math.ca}
\subjclass{Primary: 11G05; Secondary: 11B37, 11G20, 14G25}
\keywords{elliptic curve, amicable pair, aliquot cycle}

\thanks{The first author's research supported by 
NSF DMS-0650017 and DMS-0854755.
The second author's research supported by NSERC PDF-373333.
}

%%%%%%%%%%%%%%%%%%%%%%%%%%%%%%%%%%%%%%%%%%%%%%%%%%%%%%%%%%%%%%%%%%%%%%
%%% Text (non-TeX) Abstract
%%%
%% An amicable pair for an elliptic curve E/Q is a pair of primes (p,q)
%% of good reduction for E satisfying #E(F_p) = q and #E(F_q) = p.  In
%% this paper we study elliptic amicable pairs and analogously defined
%% longer elliptic aliquot cycles. We show that there exist elliptic
%% curves with arbitrarily long aliqout cycles, but that CM elliptic
%% curves (with j not 0) have no aliqout cycles of length greater than
%% two.  We give conjectural formulas for the frequency of amicable
%% pairs.  For CM curves, the derivation of precise conjectural formulas
%% involves a detailed analysis of the values of the Grossencharacter
%% evaluated at a prime ideal P in End(E) having the property that
%% #E(F_P) is prime. This is especially intricate for the family of
%% curves with j = 0.
%%%%%%%%%%%%%%%%%%%%%%%%%%%%%%%%%%%%%%%%%%%%%%%%%%%%%%%%%%%%%%%%%%%%%%

\begin{abstract}
An \emph{amicable pair} for an elliptic curve~$E/\QQ$ is a pair of
primes~$(p,q)$ of good reduction for~$E$ satisfying
$\#\Etilde_p(\FF_p) = q$ and $\#\Etilde_q(\FF_q) = p$.  In this paper
we study elliptic amicable pairs and analogously defined longer
\emph{elliptic aliquot cycles}. We show that there exist elliptic
curves with arbitrarily long aliqout cycles, but that~CM elliptic
curves (with $j\ne0)$ have no aliqout cycles of length greater than
two.  We give conjectural formulas for the frequency of amicable
pairs.  For CM curves, the derivation of precise conjectural formulas
involves a detailed analysis of the values of the Gr\"ossencharacter
evaluated at primes~$\gp$ in~$\End(E)$ having the property
that~$\#\Etilde_\gp(\FF_\gp)$ is prime. This is especially intricate
for the family of curves with $j=0$.
\end{abstract}

\maketitle

%% \tableofcontents  

%%%%%%%%%%%%%%%%%%%%%%%%%%%%%%%%%%%%%%%%%%%%%%%%%%%%%%%%%%%%%%%%%%%%%%
%% {\small
%% \section*{*** Note to Editor and Referee ***}
%% In an effort to keep the body of this paper to a reasonable length, we
%% have included some relevant material in the following four appendices:
%% \begin{itemize}
%% \item[(\ref{appendix:j0aliqtriples})]
%% {Curves with $j=0$ have no aliquot triples}
%% \item[(\ref{appendix:indepofmaps})]
%% {Proof of independence of the maps in Proposition~\ref{proposition:ECDcurves}}
%% \item[(\ref{appendix:pariprogram})]
%% {Evaluating the formula for $M_k^{[1]}$ when $k\equiv1\pmod3$}
%% \item[(\ref{appendix:pairsoncond43})]
%% {Amicable pairs for $y^2+y=x^3+x^2$ up to $10^{11}$}
%% \end{itemize}
%% We are open to various options concerning this material, including:
%% %% the following:
%% \begin{enumerate}
%% \item
%% Remove the appendices and refer the reader to the version of the
%% paper posted on the ArXiv for the material.
%% \item
%% Publish the appendices as they currently exist.
%% \item
%% Incorporate the appendices into the body of the text.
%% \end{enumerate}
%% (Of course, different options can be applied to different appendices.)
%% If our paper is accepted for publication, please let us know how
%% you would like us to treat the appendices in the final version of
%% the paper. Thank you.
%% }
%%%%%%%%%%%%%%%%%%%%%%%%%%%%%%%%%%%%%%%%%%%%%%%%%%%%%%%%%%%%%%%%%%%%%%

%%%%%%%%%%%%%%%%%%%%%%%%%%%%%%%%%%%%%%%%%%%%%%%%%%%%%%%%%%%%%%%%%%%%%%
\section*{Introduction}
\label{section:introduction}
%%%%%%%%%%%%%%%%%%%%%%%%%%%%%%%%%%%%%%%%%%%%%%%%%%%%%%%%%%%%%%%%%%%%%%

Let $E/\QQ$ be an elliptic curve. In this paper we study pairs of
primes~$(p,q)$ such that~$E$ has good reduction at~$p$ and~$q$ and
such that the reductions~$\Etilde_p$ and~$\Etilde_q$ of~$E$ at~$p$
and~$q$ satisfy
\[
  \#\Etilde_p(\FF_p) = q
  \qquad\text{and}\qquad
  \#\Etilde_q(\FF_q) = p.
\]
By analogy with a classical problem in number theory
(cf.\ Remark~\ref{remark:classical}), we call~$(p,q)$ an
\emph{amicable pair} for the elliptic curve~$E/\QQ$.

\begin{example}
\label{example:cond3743}
Searching for amicable pairs using primes
smaller than~$10^7$ on the two elliptic curves
\[
  E_1:y^2+y=x^3-x\quad\text{and}\quad
  E_2:y^2+y=x^3+x^2,
\]
yields one amicable pair on the curve $E_1$,
\[
  (1622311, 1622471),
\]
and four amicable pairs on the curve $E_2$,
\[
  (853, 883),\semiquad
  (77761, 77999),\semiquad
  (1147339, 1148359),\semiquad
  (1447429, 1447561).
\]
\end{example}

\begin{example}
\label{exmaple:CM32}
The curve
\[
  E_3:y^2=x^3+2
\]
exhibits strikingly different amicable pair behavior. There are more
than~$800$ amicable pairs for~$E_3$ using primes smaller that~$10^6$,
the first few of which are
\[
  (13, 19), (139, 163), (541, 571), (613, 661), (757, 787), (1693, 1741).
\]
\end{example}

One objective of this note is to present theoretical and
numerical evidence for the following conjecture.

\begin{conjecture}
\label{conj:amicablepair}
Let $E/\QQ$ be an elliptic curve, let
\begin{align*}
  \Qcal_E(X)
   &= \#\bigl\{\text{amicable pairs $(p,q)$ for $E/\QQ$ with $p<q$ and
          $p\le X$}\bigr\}.
\end{align*}
be the amicable pair counting function, and assume that there are
infinitely many primes~$p$ such that~$\#\Etilde_p(\FF_p)$ is prime.
\begin{parts}
\Part{(a)}
If $E$ does not have complex multiplication, then
\[
  \Qcal_E(X) \gg\ll \frac{\sqrt{X}}{(\log X)^2}
  \quad\text{as $X\to\infty$,}
\]
where the implied constants depend on~$E$.
\Part{(b)}
If $E$ has complex multiplication, then there is a constant~$A_E>0$
such that
\[
  \Qcal_E(X)\sim A_E\frac{X}{(\log X)^2}.
\]
\end{parts}
\end{conjecture}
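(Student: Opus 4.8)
The plan is to treat Conjecture~\ref{conj:amicablepair} as the prediction of a probabilistic model and to derive the stated orders of growth (and, in the CM case, the constant~$A_E$) from standard equidistribution heuristics. The starting point is an algebraic simplification of the amicable condition. Writing $a_p = p + 1 - \#\Etilde_p(\FF_p)$ for the trace of Frobenius, the two defining equations $\#\Etilde_p(\FF_p)=q$ and $\#\Etilde_q(\FF_q)=p$ combine to give $a_p + a_q = 2$. Hence an amicable pair is exactly a prime~$p$ for which $q \defeq \#\Etilde_p(\FF_p)$ is prime and for which the trace at~$q$ takes the single prescribed value $a_q = 2 - a_p$. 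This recasts the counting function as
\[
  \Qcal_E(X) \approx \sum_{p \le X}
    \Prob\bigl(\#\Etilde_p(\FF_p)\text{ is prime}\bigr)\cdot
    \Prob\bigl(a_q = 2 - a_p \bigm| q \text{ prime}\bigr),
\]
so the whole problem reduces to estimating the two factors.

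For the first factor I would invoke a Koblitz-type heuristic: since $q=\#\Etilde_p(\FF_p)$ has size $\asymp p$, the probability that it is prime is $\sim C_E/\log p$, where $C_E$ is an Euler product over the local densities of $\#\Etilde_\ell$ in residue classes. (This is precisely where the hypothesis that infinitely many $\#\Etilde_p(\FF_p)$ are prime enters, since that infinitude is itself unknown.) For the second factor in the non-CM case, the Sato--Tate law governs $a_q$: the normalized traces $a_q/(2\sqrt{q})$ equidistribute with a smooth density, so among the $\asymp\sqrt{q}\asymp\sqrt p$ admissible integer values the probability of hitting the single target $2-a_p$ is $\sim c/\sqrt p$. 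Combining and summing,
\[
  \Qcal_E(X)\approx \sum_{p\le X}\frac{C_E}{\log p}\cdot\frac{c}{\sqrt p}
    \asymp \frac{1}{\log X}\sum_{p\le X}\frac{1}{\sqrt p}
    \asymp \frac{\sqrt X}{(\log X)^2},
\]
which is the order of growth asserted in part~(a).

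The CM case is genuinely different, because the second factor ceases to be a $1/\sqrt p$ ``random hit.'' Let $E$ have CM by an order in $K$ with Gr\"ossencharacter~$\psi$, and suppose~$p$ is an ordinary prime splitting as $p=\gp\bar\gp$, so that $\pi\defeq\psi(\gp)$ satisfies $\Norm(\pi)=p$ and $\#\Etilde_p(\FF_p)=\Norm(\pi-1)$. If $q=\Norm(\pi-1)$ is prime then $\gq\defeq(\pi-1)$ is a prime of the order lying over~$q$ (automatically split, since~$q$ is then a norm), and $\#\Etilde_q(\FF_q)=\Norm\bigl(\psi(\gq)-1\bigr)$. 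As $\psi(\gq)$ is a generator of~$\gq$, we have $\psi(\gq)=u\,(\pi-1)$ for a unit~$u$, and when $u=-1$ one computes $\psi(\gq)-1=-\pi$, whence $\#\Etilde_q(\FF_q)=\Norm(\pi)=p$ \emph{automatically}. Thus the second factor is a positive constant---the proportion of primes for which the Gr\"ossencharacter selects the favorable unit---and
\[
  \Qcal_E(X)\approx \sum_{p\le X}\frac{C_E}{\log p}\cdot c_2
    \sim A_E\,\frac{X}{(\log X)^2},
\]
giving part~(b), with $A_E$ assembled from $C_E$ and the unit-selection density.

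The main obstacle is pinning down, in the CM case, that unit-selection density, since it is exactly what determines~$A_E$. When the unit group is $\{\pm1\}$ (that is, $j\ne 0,1728$) there are only the two generators $\pm(\pi-1)$ to track, and the analysis reduces to the distribution of $\psi$-values modulo signs. For $K=\QQ(i)$, and especially for $j=0$ where the unit group has order~$6$, one must follow $\psi(\gq)$ through a larger group of units and decide, unit by unit, which choices force $\#\Etilde_q(\FF_q)=p$; carrying this out for $j=0$ is the most intricate step, in agreement with the abstract. A secondary difficulty, common to all Koblitz-type problems, is that the primality factor $C_E/\log p$ and the equidistribution of $a_q$ are themselves conjectural, so the argument establishes the predicted growth rates conditionally on these standard heuristics rather than unconditionally.
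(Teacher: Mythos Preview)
Your heuristic matches the paper's almost exactly: the non-CM case is handled via Koblitz primality plus Sato--Tate giving a $1/\sqrt{p}$ hit probability (the paper's Section~\ref{section:nonCM} with $\ell=2$), and the CM case via the Gr\"ossencharacter, writing $\psi(\gq)=u(\pi-1)$ for a unit and tracking which units force $\#\Etilde_q(\FF_q)=p$ (the paper's Theorem~\ref{thm:amicableCMconj}). One small refinement: you needn't worry separately about $K=\QQ(i)$, since the paper observes that CM curves over~$\QQ$ with $D\not\equiv3\pmod4$ all have nontrivial $2$-torsion and hence no amicable pairs at all, leaving only the $\{\pm1\}$-unit cases and~$j=0$.
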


We do not believe that it is clear, \emph{a priori}, why there should
be such a striking difference between the CM and the non-CM cases. We
first discovered this phenomenon experimentally; subsequently we found
an explanation based on Theorem~\ref{thm:amicableCMconj}, which says
that if~$E/\QQ$ has~CM and if~$q=\#\Etilde_p(\FF_p)$ is prime, then
there are generally only two possible values for~$\#\Etilde_q(\FF_q)$,
one of which is~$p$. (The situation for $j(E)=0$ is considerably more
complicated; see Section~\ref{section:amicableCMj0}.)  This contrasts
with the non-CM case, where~$\#\Etilde_q(\FF_q)$ seems to be free to
range throughout the Hasse interval. We refer the reader to
Conjectures~\ref{conj:CMamicable} and~\ref{conjecture:N1density} for more
precise versions of the CM part of Conjecture~\ref{conj:amicablepair}.

The frequency of primes~$p$ such that $\#\Etilde_p(\FF_p)$ is prime or
almost prime has been studied by a number of authors. In
Section~\ref{section:Epprimeinfoften} we discuss what is known and
what is conjectured concerning this problem.

Generalizing the notion of amicable pair, we define an \emph{aliquot
cycle} of length~$\ell$ for $E/\QQ$ to be a sequence of distinct
primes $(p_1,p_2,\ldots,p_\ell)$ such that~$E$ has good reduction at
every~$p_i$ and such that
\begin{multline*}
  \#\Etilde_{p_1}(\FF_{p_1})=p_2,\quad   \#\Etilde_{p_2}(\FF_{p_2})=p_3,
      \quad\ldots\quad \\*
  \#\Etilde_{p_{\ell-1}}(\FF_{p_{\ell-1}})=p_\ell,\quad
  \#\Etilde_{p_\ell}(\FF_{p_\ell})=p_1.
\end{multline*}

\begin{example}
\label{example:aliquotcycles}
The elliptic curve $y^2=x^3-25x-8$ has the aliquot triple
$(83, 79, 73)$. The elliptic curve
{\small\[
  E:y^2=x^3+176209333661915432764478x+ 60625229794681596832262
\]}%
has an aliquot cycle $(23, 31, 41, 47, 59, 67, 73, 79, 71, 61, 53, 43,
37, 29)$ of length~$14$.
\end{example}

In Section~\ref{section:nonCM} we give an heuristic argument
suggesting that the counting function for aliquot cycles of
length~$\ell$ for non-CM elliptic curves grows like $\sqrt{X}/(\log
X)^\ell$.  The rough idea is to assume that if $q=\#\Etilde_p(\FF_p)$
is prime, then the trace values $a_q(E)=q+1-\#\Etilde_q(\FF_q)$ are
(more-or-less) equidistributed within the appropriate Hasse interval.
\par
In Section~\ref{section:longaliquotcycles} we give an elementary
construction (Theorem~\ref{theorem:arblngth}) using the prime number
theorem, the Chinese remainder theorem, and a result of Deuring,
to prove that for every~$\ell$ there exists an elliptic curve~$E/\QQ$
with an aliquot cycle of length~$\ell$.
\par
We next consider the case of elliptic curves having complex
multiplication. These curves exhibit strikingly different behavior
from their non-CM counterparts.
Our first result (Theorem~\ref{thm:amicableCMconj}) 
says that if~$E/\QQ$ has~CM with
$j(E)\ne0$, and if $q=\#\Etilde_p(\FF_p)$ is prime, then there are
only two possible values for $\#\Etilde_q(\FF_q)$, namely~$p$
and~$2q+2-p$. Assuming each is equally likely (which seems to be the
case experimentally), this explains why~CM curves have so many
amicable pairs. The proof involves first proving
that~$p$ and~$q$ split in~$\End(E)$, and then relating the values of
the Gr\"ossencharacter~$\psi_E$ at primes lying above~$p$ and~$q$.
\par
Theorem~\ref{thm:amicableCMconj} can also be used to show that a CM curve with
$j\ne0$ has no aliquot cycles of length~$3$ or greater; see
Corollary~\ref{corollary:CMhasnoaliqge3}.  This stands in contrast to
Theorem~\ref{theorem:arblngth}, which says that there exist curves
with arbitrarily long aliquot cycles.
\par
We finally turn to the $j=0$ curves $y^2=x^3+k$,
whose complicated analysis is given in a
lengthy Section~\ref{section:amicableCMj0}.
For prime values of~$k$, we give a precise conjectural formula
for the counting function of amicable pairs that depends on 
the value of~$k$ modulo~$36$. For example, if~$k$ is prime
and $k\equiv1~\text{or}~19\pmod{36}$, then we conjecture that
\begin{equation}
  \label{eqn:introlimit}
  \lim_{X\to\infty}
  \frac{ \#\bigl\{p<X : \text{$p$ is part of an amicable pair} \bigr\} }
       { \#\bigl\{p<X : \text{$\#\Etilde_p(\FF_p)$ is prime} \bigr\} }
  = \frac16 + \frac{1}{3k-9},
\end{equation}
while if $k\equiv11~\text{or}~23\pmod{36}$, then the
limiting value in~\eqref{eqn:introlimit}
is (conjecturally) equal to $\frac16 + \frac{k}{3k^2-6}$. There are similar
formulas for the other congruence classes.
\par
The derivation of these formulas is in two parts. First, by
analyzing the values of the Gr\"ossencharacter and using sextic reciprocity,
we prove that~$(p,q)$ is an amicable pair if and only if
$\SS{\psi_E(\gp)}{k}\SS{1-\psi_E(\gp)}{k}=1$. If the values
of~$\psi_E(\gp)$ modulo~$k$ were equidistributed as~$p$ varies, we
would conjecture that the number of amicable pairs is governed
by the proportion of $\l\in\Ocal/k\Ocal$ satisfying $\SS{\l(1-\l)}{k}=1$.
(Here $\Ocal=\End(E)=\ZZ[(1+\sqrt{-3})/2]$.) This is almost true, but
the allowable values of~$\l$ are often restricted by 
further conditions on~$\SS{\l}{k}$. Sorting out these restrictions
gives a precise conjectural value for the limit~\eqref{eqn:introlimit}
in terms of the sizes of certain subsets of~$\Ocal/k\Ocal$. 
\par
The second part of the proof is to derive explicit formulas for the
sizes of these sets. This is done by relating the points in
these sets to the
$\Ocal/k\Ocal$-points on a certain family of
curves~$C^{(\g,\d)}$ of genus four. We count these points by explicitly
decomposing the Jacobian of~$C^{(\g,\d)}$ into a product of four $j=0$
elliptic curves and using the Gr\"ossencharacter formula for the
number of points on such curves. The resulting formulas are quite
involved, especially in the case that~$k$ splits in~$\Ocal$, but
eventually most of the terms cancel, leaving a relatively compact
formula. We have no good explanation for why the final formula
has such a simple form; see Remark~\ref{remark:Mksplitcomputation} 
for a discussion of the delicacy of the computation.
\par
The conjectures in this paper are supported by heuristic arguments
and, especially for~CM curves, by theorems describing the allowable
values of the Gr\"ossencharacter~$\psi_E$. But heuristic arguments
have been known to fail, and indeed our~CM argument depends on the
assumption that~$\psi_E(\gp)\bmod k$ is uniformly distributed among
its \emph{allowable values}, where we claim to have characterized the
set of allowable values. It is thus reassuring that extensive
experiments are in close agreement with the conjectural values derived
by theory. These experiments are described in
Section~\ref{section:amicableCMexper}.
\par
Finally, in Section~\ref{section:tangentialremarks} we explain where
we first ran across amicable pairs and aliquot cycles for elliptic
curves, and we describe some possible generalizations that 
deserve further study.

%%%%%%%%%%%%%%%%%%%%%%%%%%%%%%%%%%%%%%%%%%%%%%%%%%%%%%%%%%%%%%%%%%%%%%
\section{How often is $\#\Etilde_p(\FF_p)$ prime?}
\label{section:Epprimeinfoften}
%%%%%%%%%%%%%%%%%%%%%%%%%%%%%%%%%%%%%%%%%%%%%%%%%%%%%%%%%%%%%%%%%%%%%%
If an elliptic curve~$E/\QQ$ is to have any amicable pairs or aliquot
cycles, then it is clearly necessary that there exist primes~$p$
such that $\Etilde_p(\FF_p)$ is prime. The question of existence and
density of such primes has been studied by various authors.  

\begin{remark}
\label{remark:noaliqiftors}
If $E(\QQ)_\tors\ne\{O\}$, then $\#\Etilde_p(\FF_p)$ will
be composite for all but finitely many~$p$, since
$E(\QQ)_\tors\hookrightarrow \Etilde_p(\FF_p)$ for all
$p\notdivide2\D_{E/\QQ}$.  Using this observation, it is
quite easy to produce curves having no nontrivial aliquot cycles, for
example, the curves $y^2=x^3+x$ and $y^2=x^3+1$. 
\end{remark}

More generally, there may be a local obstruction associated with the
representation $\Gal(\Qbar/\QQ)\to\Aut(E_\tors)$ that
forces~$\#\Etilde_p(\FF_p)$ to be composite for all but finitely
many~$p$; see~\cite{Zywina}. We quote a special case of a conjecture
of Koblitz, as modified by Zywina.

\begin{conjecture}
\label{conj:kobzy}
\textup{(Koblitz~\cite{Koblitz}, Zywina~\cite{Zywina})}
Let $E/\QQ$ be an  elliptic curve, and let
\[
   \Ncal_E(X) = \#\bigl\{\text{primes $p\le X$ such that $\#\Etilde_p(\FF_p)$
       is prime}\bigr\}
\]
count how often~$E$ modulo~$p$ has a prime number of points.  Then
there is a constant $C_{E/\QQ}$ such that
\[
  \Ncal_E(X) \sim C_{E/\QQ}\frac{X}{(\log X)^2}.
\]
Further, $C_{E/\QQ}>0$ if and only if there are infinitely many
primes~$p$ such that $\#\Etilde_p(\FF_p)$ is prime.
\end{conjecture}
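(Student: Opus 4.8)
The plan is to justify the asymptotic heuristically, in the style of the Hardy--Littlewood--Bateman--Horn conjectures, since as a theorem the statement lies beyond current sieve technology: it is a strengthening of twin-prime-type problems. The sequence under study is $\#\Etilde_p(\FF_p)=p+1-a_p(E)$, and the idea is to model it as a ``random'' integer of size $\approx p$ subject to the divisibility biases that $E$ actually imposes.

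First I would record the main term from the crude probabilistic model. If $\#\Etilde_p(\FF_p)$ behaved like a random integer near $p$, the prime number theorem would make it prime with probability $\sim1/\log p$. Since there are $\sim X/\log X$ primes $p\le X$, each contributing on average about $1/\log X$, summing (and making this precise by partial summation against $\pi$) gives
\[
  \sum_{p\le X}\frac{1}{\log p}\sim\frac{X}{(\log X)^2},
\]
which already explains the conjectured order of magnitude.

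Next I would extract the constant $C_{E/\QQ}$ as a product of local correction factors. The integer $\#\Etilde_p(\FF_p)$ is far from random modulo a small prime $\ell$: whether $\ell\mid\#\Etilde_p(\FF_p)$ is governed by the conjugacy class of $\Frob_p$ under the mod-$\ell$ representation $\rho_{E,\ell}\colon\Gal(\Qbar/\QQ)\to\GL_2(\ZZ/\ell\ZZ)$. By the Chebotarev density theorem the proportion of $p$ with $\ell\mid\#\Etilde_p(\FF_p)$ equals the proportion of $g$ in the image of $\rho_{E,\ell}$ satisfying $\det(g)+1-\Tr(g)\equiv0\MOD{\ell}$, i.e.\ having eigenvalue $1$. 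Dividing this true local density by the one a random integer would exhibit produces a factor $c_\ell$ at each $\ell$, and the constant is the resulting Euler product $C_{E/\QQ}=\prod_\ell c_\ell$, where the various mod-$\ell$ images are (conjecturally) independent enough for the product to be taken factor by factor. The precise closed form is exactly what Koblitz wrote down and Zywina corrected once one accounts for the images being smaller than all of $\GL_2$.

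The crux is the ``if and only if'' clause, which I would handle by showing that $C_{E/\QQ}=0$ precisely when a single local factor $c_\ell$ vanishes---equivalently, when some $\ell$ forces $\ell\mid\#\Etilde_p(\FF_p)$ for all but finitely many $p$, as in the torsion obstruction of Remark~\ref{remark:noaliqiftors}. Thus positivity of the constant is equivalent to the absence of any such congruence obstruction, which is exactly the condition that infinitely many $p$ give $\#\Etilde_p(\FF_p)$ prime. The main obstacle is that none of this can currently be made rigorous: the independence of the local conditions cannot be controlled uniformly as $\ell$ grows with $X$, so only an upper bound $\Ncal_E(X)\ll X/(\log X)^2$ (via the large sieve) and almost-prime analogues are known unconditionally, while the matching lower bound remains open.
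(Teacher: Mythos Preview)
The statement is a \emph{conjecture}, not a theorem, and the paper offers no proof or even a heuristic derivation: it simply quotes the conjecture from Koblitz and Zywina and remarks that they give formulas for~$C_{E/\QQ}$ in terms of the image of $\Gal(\Qbar/\QQ)\to\Aut(E_\tors)$. You correctly recognize that a rigorous proof is out of reach and instead sketch the standard Bateman--Horn-style heuristic that underlies the conjecture. In that sense your write-up is faithful to the spirit of the cited references and actually supplies more detail than the paper itself does.

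One point to tighten in your final clause: you write that positivity of~$C_{E/\QQ}$ is ``equivalent to the absence of any such congruence obstruction, which is exactly the condition that infinitely many~$p$ give~$\#\Etilde_p(\FF_p)$ prime.'' The first equivalence (positivity of the Euler product $\Leftrightarrow$ no local factor vanishes) is genuinely provable, but the second implication (no local obstruction $\Rightarrow$ infinitely many such~$p$) is itself the content of the conjecture, not something you can deduce. Your last sentence does flag that the lower bound is open, but the phrasing just before it reads as if the ``if and only if'' has been established; it hasn't, and that is precisely the part of Conjecture~\ref{conj:kobzy} that lies beyond current methods.
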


Koblitz and Zywina give formulas for the constant~$C_{E/\QQ}$ in terms
of the image of the representation $\Gal(\Qbar/\QQ)\to\Aut(E_\tors)$.
In principle this allows one to approximate~$C_{E/\QQ}$ to high
precision, and they give a number of examples. For additional material
on the probability that~$\#\Etilde_p(\FF_p)$ is prime or almost prime,
see~\cite{Cojocaru} and~\cite{Jimenez}.

%%%%%%%%%%%%%%%%%%%%%%%%%%%%%%%%%%%%%%%%%%%%%%%%%%%%%%%%%%%%%%%%%%%%%%
\section{Aliquot cycles and amicable pairs for elliptic curves}
\label{section:defs}
%%%%%%%%%%%%%%%%%%%%%%%%%%%%%%%%%%%%%%%%%%%%%%%%%%%%%%%%%%%%%%%%%%%%%%

We formally give the following definitions as previously described in
the introduction.

\begin{definition}
Let $E/\QQ$ be an elliptic curve.  An \emph{aliquot cycle of
length~$\ell$} for~$E/\QQ$ is a sequence of distinct primes
$(p_1,p_2,\ldots,p_\ell)$ such that~$E$ has good reduction at
every~$p_i$ and such that
\begin{multline*}
  \#\Etilde_{p_1}(\FF_{p_1})=p_2,\quad   \#\Etilde_{p_2}(\FF_{p_2})=p_3,
      \quad\ldots\quad \\*
  \#\Etilde_{p_{\ell-1}}(\FF_{p_{\ell-1}})=p_\ell,\quad
  \#\Etilde_{p_\ell}(\FF_{p_\ell})=p_1.
\end{multline*}
An aliquot cycle is \emph{normalized} if $p_1=\min p_i$. Every aliquot
cycle can be normalized by a cyclic shift of its elements.
An \emph{amicable pair} is an aliquot cycle of length two.
\end{definition}

\begin{remark}
\label{remark:classical}
Classically, an amicable pair is a pair of integers $(m,n)$ satisfying
$\stilde(m)=n$ and $\stilde(n)=m$, where~$\stilde(n)=\s(n)-n$ is the sum of
the proper divisors of~$n$. Similarly, a number~$n $ is perfect if
$\stilde(n)=n$, and a (classical) aliquot cycle is a list of
distinct integers $(n_1,n_2,\ldots,n_\ell)$ satisfying
\[
  \stilde(n_1)=n_2,\quad  \stilde(n_2)=n_3,\quad  \ldots\quad
  \stilde(n_{\ell-1})=n_\ell,\quad  \stilde(n_\ell)=n_1.
\]
(Numbers appearing in an aliquot cycle are also called sociable
numbers.)  Perfect numbers and amicable pairs were studied in ancient
Greece, and aliquot cycles of all lengths continue to attract interest
to the present day. See for
example~\cite{AmicableSurvey,teRiele1,teRiele2,Yan}.

By analogy with the classical case, one might call an aliquot
cycle~$(p)$ of length one for $E/\QQ$ a \emph{perfect prime}, but such
primes have already been given a name. They are called \emph{anomalous
  primes} and appear as exceptional cases in diverse applications; see
for example~\cite{Mazur,Olson}. In particular, anomalous primes 
are to be avoided in cryptography because the elliptic curve discrete
logarithm problem (ECDLP) for anomalous primes can be solved in linear
time~\cite{SatohAraki,Semaev,Smart}.
\end{remark}

We begin our study of aliquot cycles with the following general
observation concerning amicable pairs.

\begin{proposition}
\label{proposition:sameendring}
Let $E/\QQ$ be an elliptic curve, and let~$(p,q)$ be a normalized amicable pair
for~$E/\QQ$ with $p\ge5$. Then
\[
  \End(\Etilde_p/\FF_p)\otimes\QQ \cong
  \End(\Etilde_q/\FF_q)\otimes\QQ .
\]
\end{proposition}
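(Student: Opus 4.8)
The plan is to identify each of the two endomorphism algebras with a single imaginary quadratic field, read off from the trace of Frobenius, and then to observe that the amicable-pair relations force the two defining discriminants to coincide. Write $a_p = p+1-\#\Etilde_p(\FF_p)$ and $a_q = q+1-\#\Etilde_q(\FF_q)$ for the traces of the Frobenius endomorphisms $\pi_p\in\End(\Etilde_p/\FF_p)$ and $\pi_q\in\End(\Etilde_q/\FF_q)$. Each Frobenius is an algebraic integer satisfying a quadratic equation, $\pi_p^2-a_p\pi_p+p=0$ and $\pi_q^2-a_q\pi_q+q=0$. By the classification of endomorphism algebras of elliptic curves over a prime field $\FF_r$ (Deuring, Waterhouse), the algebra $\End(\Etilde/\FF_r)\otimes\QQ$ is a quaternion algebra exactly when the Frobenius is rational, and is otherwise the imaginary quadratic field $\QQ(\pi)=\QQ\bigl(\sqrt{a^2-4r}\bigr)$.

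First I would rule out the quaternionic (rational Frobenius) case for both primes. The Hasse bound gives $a_p^2\le 4p$, with equality only if $p=(a_p/2)^2$ is a perfect square; since $p$ is prime with $p\ge5$, and likewise $q>p$ is prime, neither $4p$ nor $4q$ is a perfect square, so
\[
  a_p^2-4p<0
  \qquad\text{and}\qquad
  a_q^2-4q<0.
\]
In particular $\pi_p,\pi_q\notin\QQ$, so both endomorphism algebras are genuinely imaginary quadratic:
\[
  \End(\Etilde_p/\FF_p)\otimes\QQ\cong\QQ\bigl(\sqrt{a_p^2-4p}\bigr),
  \qquad
  \End(\Etilde_q/\FF_q)\otimes\QQ\cong\QQ\bigl(\sqrt{a_q^2-4q}\bigr).
\]

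It then remains to compare these two fields, and here the amicable-pair hypothesis does all the work. Since $\#\Etilde_p(\FF_p)=q$ and $\#\Etilde_q(\FF_q)=p$, the traces are $a_p=p+1-q$ and $a_q=q+1-p$, so that $a_p+a_q=2$ and $a_p-a_q=2(p-q)$. Multiplying gives $a_p^2-a_q^2=(a_p+a_q)(a_p-a_q)=4(p-q)=4p-4q$, that is, $a_p^2-4p=a_q^2-4q$. The two Frobenius discriminants are therefore equal, the two imaginary quadratic fields coincide, and the desired isomorphism follows. I do not anticipate a substantive obstacle: the crux is the one-line identity $a_p^2-4p=a_q^2-4q$, which is immediate once one notices $a_p+a_q=2$. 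The only point needing care is the appeal to Deuring--Waterhouse to remain in the quadratic regime, and this is precisely where the hypothesis $p\ge5$ enters (guaranteeing that $p$ and $q$ are non-square primes); it is also why the statement must be read with $\End$ taken over the prime field $\FF_r$ rather than geometrically, since for supersingular reductions the geometric endomorphism algebras would instead be quaternion algebras ramified at the distinct primes $p$ and $q$.
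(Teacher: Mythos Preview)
Your proof is correct and follows essentially the same route as the paper: both reduce to the identity $a_p^2-4p=a_q^2-4q$, which you derive via the pleasant factorization $a_p^2-a_q^2=(a_p+a_q)(a_p-a_q)=2\cdot 2(p-q)$. The only cosmetic difference is that the paper first observes $a_p$ is odd (hence nonzero) to force ordinary reduction, whereas you bypass this by noting that over a prime field Frobenius is never rational, so $\End(\Etilde_p/\FF_p)\otimes\QQ=\QQ(\pi_p)$ regardless; both are fine.
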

\begin{proof}
The fact that $p$ is odd and $q=\#\Etilde_p(\FF_p)=p+1-a_p$ is prime
implies in particular that~$a_p\ne0$, so~$E$ has ordinary reduction
at~$p$.  (This is where we use the assumption that~$p\ge5$;
cf.\ \cite[Exercise~5.10]{AEC}.)  Reversing the roles of~$p$ and~$q$
shows that~$E$ also has ordinary reduction at~$q$.
\par 
The assumption that~$(p,q)$ is an amicable pair is equivalent to the
assertions that
\[
  q = p+1-a_p\qquad\text{and}\qquad p=q+1-a_q,
\]
and then a little bit of algebra shows that 
\begin{equation}
  \label{eqn:ap24peqaq24q}
  a_p^2 - 4p = a_q^2 - 4q.
\end{equation}
\par
The field~$\End(\Etilde_p/\FF_p)\otimes\QQ$ is generated by
the Frobenius element $\Frob_p(x)=x^p$, which is a root of
\[
  T^2 - a_p T + p = 0.
\]
Thus 
\[
  \End(\Etilde_p/\FF_p)\otimes\QQ \cong \QQ\left(\sqrt{a_p^2-4p}\;\right).
\]
The analogous formula is true for~$q$, and
then~\eqref{eqn:ap24peqaq24q} completes the proof of the proposition.
\end{proof}

%%%%%%%%%%%%%%%%%%%%%%%%%%%%%%%%%%%%%%%%%%%%%%%%%%%%%%%%%%%%%%%%%%%%%%
\section{Counting aliquot cycles for non-CM elliptic curves}
\label{section:nonCM}
%%%%%%%%%%%%%%%%%%%%%%%%%%%%%%%%%%%%%%%%%%%%%%%%%%%%%%%%%%%%%%%%%%%%%%

In this section we study the aliquot cycle counting function
\[
  \Qcal_{E,\ell}(X) = \#\left\{
     \begin{tabular}{@{}c@{}}
       normalized aliquot cycles $(p_1,\ldots,p_\ell)$ \\
       of length $\ell$ for $E/\QQ$ satisfying $p_1\le X$ \\
     \end{tabular}
      \right\}.
\]

\begin{conjecture}
\label{conjecture:aliquotcount}
Let~$E/\QQ$ be an elliptic curve that does not have complex
multiplication, and assume that there are infinitely many primes~$p$
such that~$\#\Etilde_p(\FF_p)$ is prime.  Then the aliquot cycle
counting function satisfies
\[
  \Qcal_{E,\ell}(X) \gg\ll \frac{\sqrt{X}}{(\log X)^\ell}
  \quad\text{as $X\to\infty$,}
\]
where the implied positive constants depend on~$E$ and~$\ell$, but are
independent of~$X$.
\end{conjecture}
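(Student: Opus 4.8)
The plan is to treat the assignment $p \mapsto \#\Etilde_p(\FF_p)$ as a partial map on the primes of good reduction (defined precisely when the image is again prime), so that a normalized aliquot cycle of length~$\ell$ is exactly a periodic orbit of period~$\ell$, counted once per orbit rather than once per point. First I would record the basic geometric constraint: by the Hasse bound each step satisfies $|p_{i+1}-p_i| = |1 - a_{p_i}| \le 1 + 2\sqrt{p_i}$, so after~$\ell$ steps all of $p_1,\dots,p_\ell$ lie within $O(\sqrt X)$ of one another (with an implied constant depending on~$\ell$). In particular every prime in a cycle with $p_1 \le X$ satisfies $p_i \asymp X$, and the closing prime~$p_1$ automatically lies in the Hasse interval of~$p_\ell$; this is what makes the final closure condition both compatible and of the expected size.

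I would then build the heuristic count as a product of three factors. (i)~The number of admissible starting primes $p_1 \le X$ is $\sim X/\log X$. (ii)~For each of the $\ell-1$ links $p_i \mapsto p_{i+1} = \#\Etilde_{p_i}(\FF_{p_i})$ producing $p_2,\dots,p_\ell$, I model the event ``$p_{i+1}$ is prime'' as having probability of order $1/\log X$; this is consistent with Conjecture~\ref{conj:kobzy}, since Koblitz's asymptotic $\Ncal_E(X) \sim C_{E/\QQ}\,X/(\log X)^2$ factors as (the number of primes $\le X$) times a conditional probability of order $1/\log X$ that $\#\Etilde_p(\FF_p)$ is again prime. Treating the successive links as independent contributes a factor $(\log X)^{-(\ell-1)}$. (iii)~The closure condition $\#\Etilde_{p_\ell}(\FF_{p_\ell}) = p_1$ pins down a single integer value in the Hasse interval of~$p_\ell$, whose width is $\asymp \sqrt X$; assuming the trace $a_{p_\ell}$ is more-or-less equidistributed there in the sense of Sato--Tate, with density bounded away from the endpoints (which is where~$p_1$ sits, as~$p_1$ and~$p_\ell$ differ by only $O(\sqrt X)$), this event has probability $\asymp 1/\sqrt X$. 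Multiplying, and dividing by~$\ell$ to account for the cyclic symmetry of a normalized cycle, gives
\[
  \Qcal_{E,\ell}(X) \approx \frac{1}{\ell}\cdot\frac{X}{\log X}\cdot\frac{1}{(\log X)^{\ell-1}}\cdot\frac{1}{\sqrt X}
  = \frac{1}{\ell}\cdot\frac{\sqrt X}{(\log X)^\ell},
\]
which is the asserted order of growth.

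The step I expect to be the genuine obstacle is justifying, even heuristically, the two inputs in~(ii) and~(iii). The per-link primality model assumes that the events ``$\#\Etilde_{p_i}(\FF_{p_i})$ is prime'' behave independently as~$i$ ranges along the orbit, and the closure step assumes that $\#\Etilde_{p_\ell}(\FF_{p_\ell})$ attains a prescribed value with the frequency predicted by a Lang--Trotter/Sato--Tate heuristic. Neither is a theorem: single-value trace counts are exactly the content of the (open) Lang--Trotter conjecture, and the mutual independence along an orbit is precisely the sort of assumption that ``has been known to fail.'' This is why the statement is phrased as a conjecture, and why only the order of magnitude $\sqrt X/(\log X)^\ell$ is asserted rather than a precise constant. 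A more refined analysis would track the Koblitz--Zywina and Sato--Tate densities to predict that constant, but for the stated $\gg\ll$ estimate it suffices that all the relevant densities be positive and bounded --- which is exactly the role played by the hypothesis that there are infinitely many primes~$p$ with $\#\Etilde_p(\FF_p)$ prime, guaranteeing via Conjecture~\ref{conj:kobzy} that $C_{E/\QQ}>0$.
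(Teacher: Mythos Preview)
Your heuristic is essentially the same as the paper's: you use the Hasse bound to keep all~$p_i\asymp X$, the Koblitz--Zywina conjecture to model each of the~$\ell-1$ primality events at cost~$1/\log X$, and Sato--Tate to model the closure step at cost~$1/\sqrt X$, then multiply. The only cosmetic difference is that the paper phrases this as a per-prime probability~$\asymp 1/\bigl(\sqrt p\,(\log p)^{\ell-1}\bigr)$ and then sums over~$p\le X$ via an integral, whereas you multiply $\pi(X)$ by the conditional probabilities directly; your division by~$\ell$ and your remark about Sato--Tate density near the interval endpoints are refinements the paper omits, but neither affects the~$\gg\ll$ conclusion.
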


\begin{remark} 
As noted in Section~\ref{section:tangentialremarks}, an aliquot
cycle~$(p)$ of length one consists of a single anomalous prime. In
this case, Conjecture~\ref{conjecture:aliquotcount} follows from a
general conjecture of Lang and Trotter~\cite{LangTrotter}, which predicts
the stronger result $\Qcal_{E,1}(X)\sim c\sqrt{X}/\log X$.
\end{remark}

We give an heuristic argument in support of
Conjecture~\ref{conjecture:aliquotcount}. To ease notation, let
\[
  N_p = \#\Etilde_p(\FF_p).
\]
Then, setting $p_1=p$, we have
\newcommand{\deq}{\stackrel{\text{def}}{=}}
\begin{align}
  \label{eqn:Pppoaac}
  \Prob&(\text{$p$ is part of an aliquot cycle of length $\ell$}) \notag\\*
  &= \Prob\left(\begin{tabular}{@{}c@{}}
     $p_2 \deq N_{p_1}$ is prime
          and $p_3 \deq N_{p_2}$ is prime and \\
      \dots\ and $p_\ell \deq N_{p_{\ell-1}}$ is prime and $N_{p_\ell}=p_1$\\
  \end{tabular}
  \right) \notag\\
  &\approx
    \biggl(\prod_{i=1}^{\ell-1} \Prob(\text{$p_{i+1} \deq N_{p_i}$ is prime})\biggr)
      \Prob(N_{p_\ell}=p_1).
\end{align}
(We ignore the small probabilty that there is some $i<\ell$ such
that~$N_{p_i}$ is equal to an earlier~$p_j$.)
\par
Under our assumption that~$N_p$ is prime for infinitely many~$p$,
Conjecture~\ref{conj:kobzy} says that
\[
  \Prob(\text{$N_p$ is prime}) \gg\ll \frac{1}{\log p},
\]
and since
\[
  p_{i+1}=N_{p_i} = p_i + O(\sqrt{p_i}),
\]
every term in the sequence $p=p_1,p_2,\ldots,p_\ell$ satisfies $p_i =
p + O(\sqrt p)$. Hence
\[
  \Prob(\text{$N_{p_i}$ is prime})
  \gg\ll \frac{1}{\log p_i}
  \sim \frac{1}{\log p}.
\]
Substituting this into~\eqref{eqn:Pppoaac} gives
\begin{equation}
  \label{eqn:Pppaliq}
  \Prob\left(\begin{tabular}{@{}c@{}}
     $p$ is part of an aliquot\\cycle of length $\ell$\\
   \end{tabular} \right)
  \approx \frac{1}{(\log p)^{\ell-1}} \cdot \Prob(N_{p_\ell}=p_1).
\end{equation}

In order to estimate the last factor, we use the Sato--Tate
conjecture~\cite[C.21.1]{AEC}, which says that as~$q$ varies, the values
of~$N_q$ are distributed in the
interval~$[q+1-2\sqrt{q},q+1+2\sqrt{q}]$ according to the Sato--Tate
distribution,
\[
  \#\left\{q \le X : a \le \frac{q+1-N_q}{2\sqrt{q}} \le b\right\}
  \sim \pi(X)\cdot\frac{2}{\pi}\int_a^b\sqrt{1-t^2}\,dt.
\]
(See~\cite{Taylor} for a proof of the Sato--Tate conjecture in certain
cases, although our use of the conjecture is purely heuristic.)
Then for primes~$p$ and $q=p+O(\sqrt{p})$, a rough estimate gives
\begin{equation}
  \label{eqn:PNqp}
  \Prob(N_q=p) \gg\ll \frac{1}{\sqrt{q}} \sim \frac{1}{\sqrt{p}}.
\end{equation}
Combining~\eqref{eqn:Pppaliq} and~\eqref{eqn:PNqp} yields
\[
  \Prob\left(\begin{tabular}{@{}c@{}}
     $p$ is part of an aliquot\\cycle of length $\ell$\\
   \end{tabular} \right)
  \gg\ll \frac{1}{\sqrt{p}(\log p)^{\ell-1}} .
\]

We now estimate the number of normalized aliquot cycles of length~$\ell$ 
whose initial prime is less than~$X$.  
\begin{align*}
  \Qcal_{E,\ell}(X)
  &\approx \sum_{p\le X} 
      \Prob\left(\begin{tabular}{@{}c@{}}
         $p$ is the initial element of a normalized\\
         aliquot cycle of length $\ell$\\
       \end{tabular} \right) \\
  &\gg\ll \sum_{p\le X} \frac{1}{\sqrt{p}(\log p)^{\ell-1}} .
\end{align*}
It only remains to use the rough approximation
\[
  \sum_{p\le X} f(X) \approx \sum_{n\le X/\log X} f(n\log n)
  \approx \int^{X/\log X} \hspace{-1.5em} f(t\log t)\,dt
  \approx \int^X \hspace{-.5em} f(u)\,\frac{du}{\log u}
\]
to obtain
\[
  \Qcal_{E,\ell}(X)
  \gg\ll 
  \int^X \frac{1}{\sqrt{u}(\log u)^{\ell-1}} \cdot \frac{du}{\log u}
  \gg\ll
  \frac{\sqrt X}{(\log X)^\ell}.
\]

%%%%%%%%%%%%%%%%%%%%%%%%%%%%%%%%%%%%%%%%%%%%%%%%%%%%%%%%%%%%%%%%%%%%%%
\section{Aliquot cycles of arbitrary length}
\label{section:longaliquotcycles}
%%%%%%%%%%%%%%%%%%%%%%%%%%%%%%%%%%%%%%%%%%%%%%%%%%%%%%%%%%%%%%%%%%%%%%

\begin{theorem}
\label{theorem:arblngth}
For every $\ell\ge1$ there exists an elliptic curve~$E/\QQ$ that has
an aliquot cycle of length~$\ell$. More generally, for any positive
integers~$\ell_1,\ldots,\ell_r$ there exists an elliptic curve~$E/\QQ$
that has distinct aliquot cycles of lengths~$\ell_1,\ldots,\ell_r$.
\end{theorem}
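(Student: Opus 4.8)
The plan is to build the curve backwards from a prospective cycle. If $(p_1,\dots,p_\ell)$ is to be an aliquot cycle for $E$, then reading indices cyclically we need $\#\Etilde_{p_i}(\FF_{p_i})=p_{i+1}$, i.e.\ the trace of Frobenius at $p_i$ must equal $a_i\defeq p_i+1-p_{i+1}$. So I would proceed in three steps, matching the three advertised ingredients: (i) find primes $p_1<\cdots<p_\ell$ so tightly clustered that every $a_i$ obeys the Hasse bound $|a_i|\le 2\sqrt{p_i}$; (ii) invoke Deuring's theorem to realize each prescribed trace by an elliptic curve $E_i/\FF_{p_i}$ with $\#E_i(\FF_{p_i})=p_{i+1}$; and (iii) use the Chinese remainder theorem to patch the local curves $E_i$ into a single global curve $E/\QQ$ that reduces to $E_i$ at each $p_i$.

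The crux is step (i), and at first glance it looks dangerous: traversing the cycle with the natural ordering $p_1\to p_2\to\cdots\to p_\ell\to p_1$, the return jump forces $p_\ell+1-p_1\le 2\sqrt{p_\ell}$, so all $\ell$ primes must lie in an interval of length roughly $\sqrt N$. One might worry this requires unproven estimates for primes in short intervals, but the prime number theorem together with a pigeonhole argument already suffices. Indeed $\pi(2N)-\pi(N)\sim N/\log N$, so partitioning $[N,2N]$ into its $\lfloor\sqrt N\rfloor$ subintervals of length $\sqrt N$ and averaging, some subinterval contains at least $(\sqrt N/\log N)(1+o(1))$ primes, which exceeds $\ell$ once $N$ is large. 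Choosing $\ell$ of them as $p_1<\cdots<p_\ell$, every successive gap and the wrap-around gap is at most $\sqrt N+1\le 2\sqrt{p_i}$, so each $a_i$ lies in the Hasse interval. Moreover $p_i+1$ is even and hence not an odd prime, so $a_i\ne0$; since then $0<|a_i|<p_i$ we get $\gcd(a_i,p_i)=1$, placing us squarely in the ordinary case of Deuring's theorem.

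For step (ii), Deuring's theorem produces for each $i$ an elliptic curve $E_i/\FF_{p_i}$ of trace $a_i$, so $\#E_i(\FF_{p_i})=p_i+1-a_i=p_{i+1}$; taking $N\ge5$ we may present $E_i$ as $y^2=x^3+A_ix+B_i$ with $A_i,B_i\in\FF_{p_i}$ and nonzero discriminant. For step (iii), the Chinese remainder theorem furnishes $A,B\in\ZZ$ with $A\equiv A_i$ and $B\equiv B_i\pmod{p_i}$ for all $i$; set $E:y^2=x^3+Ax+B$. Since each $E_i$ is nonsingular we have $p_i\nmid\D_{E/\QQ}$, so $E$ has good reduction at $p_i$ and reduces there to exactly $E_i$. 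Hence $\#\Etilde_{p_i}(\FF_{p_i})=p_{i+1}$ for every $i$, and $(p_1,\dots,p_\ell)$ is an aliquot cycle of length $\ell$ for $E/\QQ$.

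The generalization to prescribed lengths $\ell_1,\dots,\ell_r$ requires no new idea. I would run the pigeonhole step of (i) in $r$ widely separated ranges to obtain pairwise disjoint prime clusters $S_1,\dots,S_r$ with $\#S_j=\ell_j$, set up the target traces within each cluster as above, apply Deuring to every prime of $S_1\cup\cdots\cup S_r$, and then perform a single Chinese remainder theorem over all of these (distinct) primes simultaneously to obtain one curve $E/\QQ$. The resulting cycles are automatically distinct, being supported on disjoint sets of primes. The only point demanding care is the bookkeeping of the simultaneous congruences and verifying that good reduction persists at every prime involved, both of which are routine.
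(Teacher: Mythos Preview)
Your proof is correct and follows essentially the same three-step approach as the paper: find tightly clustered primes via the prime number theorem, realize each prescribed trace locally via Deuring's theorem, and glue the local curves via the Chinese remainder theorem. The only cosmetic difference is in step~(i): the paper takes $\ell$ consecutive primes $q_{n+1},\dots,q_{n+\ell}$ and argues that $q_{n+\ell}-q_n\le 2\sqrt{q_n}$ for $n$ large, whereas your pigeonhole argument on subintervals of $[N,2N]$ is a cleaner way to extract the same conclusion (and your extra remark that $a_i\ne0$, while not needed for the version of Deuring's theorem the paper cites, does no harm).
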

\begin{proof}
A theorem of Deuring~\cite{Deuring} (vastly
generalized by Waterhouse~\cite{Waterhouse}, see also
R\"uck~\cite{Ruck}) says that if~$p$ is a prime and~$t$ is an integer
satisfying~$|t|\le2\sqrt{p}$, then there exists an elliptic curve
$\Etilde/\FF_p$ satisfying
\[
  \#\Etilde(\FF_p) = p+1-t.
\]
In other words, every Frobenius trace in the Hasse interval for~$p$
actually occurs as the trace of an elliptic curve defined
over~$\FF_p$.
\par
Now fix~$\ell$ and let $p_1,p_2,\ldots,p_\ell$ be a sequence of
primes with the property that
\begin{equation}
  \label{eqn:pipi1le2pi}
  |p_i+1-p_{i+1}|\le 2\sqrt{p_i}
  \quad\text{for all $1\le i\le \ell$,}
\end{equation}
where by convention we set~$p_{\ell+1}=p_1$. It is easy enough to find
such a sequence. To be precise, we can use a weak form of the prime
number theorem~\cite[Theorem~4.7]{Apostol} that says that there are positive
constants~$a$ and~$b$ such that the $n^{\text{th}}$~prime~$q_n$
satisfies
\[
  a n\log(n) \le q_n \le b n\log(n).
\]
It follows that for any given~$\ell$, if we choose~$n$ to be
sufficiently large, then
\[
  q_{n+\ell} - q_n - 1 \le 2\sqrt{q_n}.
\]
This implies that the sequence of
primes~$(q_{n+1},q_{n+2},\ldots,q_{n+\ell})$
satisfies~\eqref{eqn:pipi1le2pi}, so we take this to be our
sequence~$(p_1,\ldots,p_\ell)$.
\par
Applying the theorem of Deuring cited earlier, for each~$p_i$ we can find
an elliptic curve~$\Etilde_i/\FF_{p_i}$ satisfying
\[
  \#\Etilde_i(\FF_{p_i}) = p_{i+1}.
\]
(This includes the case $i=\ell$, in which case $p_{\ell+1}=p_1$.) We now use
the Chinese remainder theorem on the coefficients of the Weierstrass equations
for~$\Etilde_1,\ldots,\Etilde_\ell$ to find an elliptic curve~$E/\QQ$ satisfying
\[
  E \bmod p_i \cong \Etilde_i\quad\text{for all $1\le i\le \ell$.}
\]
Then by construction, the sequence~$(p_1,\ldots,p_\ell)$ is an aliquot
cycle of length~$\ell$ for~$E/\QQ$.
\par
In a similar fashion, we can construct elliptic curves over~$\QQ$ that
have aliquot cycles of any specified lengths using different sets of
primes, and then we can Chinese remainder these curves to obtain a
single elliptic curve over~$\QQ$ with any specified number of aliquot
cycles of any specified lengths.
\end{proof}

\begin{remark}
The algorithm described in Theorem~\ref{theorem:arblngth} works well
in practice, although it naturally yields equations having large
coefficients. We used it in Example~\ref{example:aliquotcycles} to
find an aliquot cycle of length~$14$. Here's another example.  The
following elliptic curve has an aliquot cycle of length~$25$, starting
with the prime~$p=41$.
{\par\tiny
\begin{multline*}
  y^2 = x^3 + 4545482133607498579268567738514832922289740324532 x\\
    + 595867265462112118291430245894379464967885794713.
\end{multline*}
}%
\end{remark}

%%%%%%%%%%%%%%%%%%%%%%%%%%%%%%%%%%%%%%%%%%%%%%%%%%%%%%%%%%%%%%%%%%%%%%
\section{Amicable pairs for CM curves with $j\ne0$}
\label{section:amicableCM}
%%%%%%%%%%%%%%%%%%%%%%%%%%%%%%%%%%%%%%%%%%%%%%%%%%%%%%%%%%%%%%%%%%%%%%

Our next goal is to formulate and provide evidence for more precise
versions of the~CM part of Conjecture~\ref{conj:amicablepair}.  A key
observation is that if~$E$ has~CM, then the assumption that
$q=\#\Etilde_p(\FF_p)$ is prime severely limits the possible values of
$\Etilde_q(\FF_q)$.  It turns out that the case of elliptic curves
with $j(E)=0$ is significantly more complicated than the other cases,
so we deal with the $j(E)\ne0$ curves in this section and leave the
$j(E)=0$ curves for the next section.

\begin{theorem}
\label{thm:amicableCMconj}
Let $E/\QQ$ be an elliptic curve and assume\textup:
\begin{parts}
\Part{(1)}
$E$ has complex multiplication by an order $\Ocal$ 
in a quadratic imaginary field~$K=\QQ(\sqrt{-D})$.
\Part{(2)}
$p$ and $q$ are primes of good reduction for~$E$
with $p\ge5$ and
\[
  q=\#\Etilde_p(\FF_p).
\]
\Part{(3)}
$j(E)\ne0$, or equivalently, 
%% $\Ocal\ne\ZZ\left[\frac12(1+\sqrt{-3})\right]$.
$\Ocal\ne\ZZ\left[\frac{1+\sqrt{-3}}{2}\right]$.
\end{parts}
Then~$D\equiv3\pmod4$, and either
\[
  \#\Etilde_q(\FF_q)=p
  \qquad\text{or}\qquad
  \#\Etilde_q(\FF_q)=2q+2-p.
\]
\end{theorem}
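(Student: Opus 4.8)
The plan is to pass to the CM structure and to track the Gr\"ossencharacter $\psi_E$ at the primes lying above $p$ and $q$. First I would record that $p\ge5$ together with the primality of $q=p+1-a_p$ forces $a_p\ne0$ (the argument in Proposition~\ref{proposition:sameendring}), so $\Etilde_p$ is ordinary and therefore $p$ splits in $\Ocal$, say $p=\gp\bar\gp$. Writing $\pi=\psi_E(\gp)$ for the Gr\"ossencharacter value, $\pi$ is a generator of $\gp$ satisfying $\pi\bar\pi=p$ and $\pi+\bar\pi=a_p$, whence
\[
  q=\#\Etilde_p(\FF_p)=(\pi-1)(\bar\pi-1)=\Norm(\pi-1).
\]
Thus $\pi-1\in\Ocal$ is an element of prime norm $q$, and I would use this one element to control everything on the $q$ side.

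Next I would prove $D\equiv3\pmod4$ by a parity argument. If instead $D\equiv1$ or $2\pmod4$, then $\Ocal_K=\ZZ[\sqrt{-D}]$, so every element---in particular the Frobenius $\pi\in\End(\Etilde_p)\subseteq\Ocal_K$---has even trace; hence $a_p$ is even and $q=p+1-a_p$ is even. The only even prime is $q=2$, but $2$ ramifies in $K$ exactly when $D\not\equiv3\pmod4$, so $E$ has bad reduction at $2$, contradicting the hypothesis that $q$ is a prime of good reduction. This forces $D\equiv3\pmod4$.

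For the two values I would first pin down the splitting of $q$. Since $\Norm(\pi-1)=q$ is a rational prime, $(q)$ cannot be inert, for then the prime above $q$ would have norm $q^2$ and there would be no element of norm $q$; and it cannot ramify, since ramified primes are primes of bad reduction while $q$ is good. Hence $q$ splits, $\Etilde_q$ is ordinary, and $\gq\defeq(\pi-1)$ is one of the two primes of $\Ocal$ above $q$. The Frobenius at $q$ is the Gr\"ossencharacter value $\psi_E(\gq)$, a generator of $\gq$. This is exactly where the hypothesis $j(E)\ne0$ enters: it rules out $\Ocal=\ZZ[\tfrac{1+\sqrt{-3}}{2}]$, so the only units of $\Ocal$ are $\pm1$ and the generators of $\gq$ are precisely $\pm(\pi-1)$. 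Therefore $\psi_E(\gq)=\pm(\pi-1)$, and since the trace is conjugation-invariant I obtain $a_q=\Trace\psi_E(\gq)=\pm(a_p-2)$. Substituting $a_p=p+1-q$ into $\#\Etilde_q(\FF_q)=q+1-a_q$ then yields $\#\Etilde_q(\FF_q)=p$ in the minus case and $\#\Etilde_q(\FF_q)=2q+2-p$ in the plus case.

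The main obstacle is the step identifying $\psi_E(\gq)$ with $\pm(\pi-1)$: one must know that the Gr\"ossencharacter value at the prime $\gq$ above $q$ really is a generator of the specific ideal $(1-\psi_E(\gp))$ \emph{inside} $\Ocal$, not merely inside $\Ocal_K$, so that the unit ambiguity is only $\pm1$. For the non-maximal orders of $\QQ(\sqrt{-3})$ with $j\ne0$ this requires checking that $\psi_E$ takes values in $\Ocal$ and not in the larger ring $\ZZ[\tfrac{1+\sqrt{-3}}{2}]$, since otherwise six units would reappear and produce additional values of $a_q$; everything else is routine bookkeeping with norms and traces.
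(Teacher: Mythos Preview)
Your argument is correct and follows essentially the same route as the paper's: split $p$, write $q=\Norm_{K/\QQ}\bigl(1-\psi_E(\gp)\bigr)$, and use $\Ocal^*=\{\pm1\}$ to conclude that $\psi_E(\gq)$ is $\pm\bigl(1-\psi_E(\gp)\bigr)$ (or its conjugate), hence $a_q=\pm(q+1-p)$. The only real difference is in the $D\equiv3\pmod4$ step---the paper handles it by enumerating the three CM $j$-invariants with $D\not\equiv3\pmod4$ and observing that each (and every twist) carries a rational $2$-torsion point, whereas you use the equivalent trace-parity argument in $\ZZ[\sqrt{-D}]$; the unit-in-$\Ocal^*$ subtlety you flag is passed over with equal brevity in the paper.
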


Theorem~\ref{thm:amicableCMconj} has an interesting consequence
concerning the allowable lengths of aliquot cycles for CM elliptic
curves. This may be compared with
Theorem~\ref{section:longaliquotcycles}, which says that there exist
(necessarily non-CM) curves having aliqout cycles of arbitrary length,
and with Conjecture~\ref{conjecture:aliquotcount}, which implies that
every non-CM elliptic curve has aliqout cycles of arbitrary length
provided that there are infinitely many primes~$p$ such that
$\#\Etilde_p(\FF_p)$ is prime.

\begin{corollary}
\label{corollary:CMhasnoaliqge3}
A CM elliptic curve $E/\QQ$ with $j(E)\ne0$ has no aliquot cycles of
length $\ell\ge3$ consisting of primes $p\ge5$.
\end{corollary}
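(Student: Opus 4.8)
The plan is to feed each consecutive triple of the cycle into Theorem~\ref{thm:amicableCMconj} and then use the distinctness of the cycle's primes to force a rigid linear recurrence, which I will show is incompatible with the cyclic structure. Suppose for contradiction that $(p_1,\ldots,p_\ell)$ is an aliquot cycle for $E/\QQ$ with $\ell\ge3$ and every $p_i\ge5$, where indices are read cyclically, so that $p_{\ell+1}=p_1$ and $p_0=p_\ell$. Fix any index~$i$. Since $p_i=\#\Etilde_{p_{i-1}}(\FF_{p_{i-1}})$ is prime and $p_{i-1}\ge5$ is a prime of good reduction, I can apply Theorem~\ref{thm:amicableCMconj} with $(p,q)=(p_{i-1},p_i)$, which yields
\[
  \#\Etilde_{p_i}(\FF_{p_i}) \in \bigl\{\, p_{i-1},\ 2p_i+2-p_{i-1}\,\bigr\}.
\]
But the aliquot cycle condition gives $\#\Etilde_{p_i}(\FF_{p_i})=p_{i+1}$, so $p_{i+1}$ is one of these two values.

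This is where the hypothesis $\ell\ge3$ does its work. For $\ell\ge3$ the indices $i-1,i,i+1$ are distinct modulo~$\ell$, so $p_{i-1},p_i,p_{i+1}$ are three distinct members of the cycle; in particular $p_{i+1}\ne p_{i-1}$. The first alternative is thereby excluded, forcing
\[
  p_{i+1}=2p_i+2-p_{i-1}\qquad\text{for every } i.
\]
Writing $d_i=p_{i+1}-p_i$ for the consecutive gaps, this recurrence is precisely $d_i-d_{i-1}=2$ for all~$i$.

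The final step is to sum this relation around the cycle. On one hand, $\sum_{i=1}^{\ell}(d_i-d_{i-1})=2\ell$, since each summand equals~$2$. On the other hand the sum telescopes to $d_\ell-d_0=0$, because $d_0=d_\ell$ by cyclicity. Hence $2\ell=0$, which is absurd, and the contradiction shows no such cycle exists. I do not anticipate a serious obstacle here: the only points requiring care are checking that every triple satisfies the hypotheses of Theorem~\ref{thm:amicableCMconj}—good reduction, $p\ge5$, and primality of the image—all of which are built into the definition of an aliquot cycle of primes $\ge5$, and confirming that the distinctness argument genuinely needs $\ell\ge3$. Indeed, for $\ell=2$ the same reasoning correctly allows $p_{i+1}=p_{i-1}$, recovering an amicable pair rather than a contradiction, so the dichotomy in the theorem is exactly what separates length-two cycles from longer ones.
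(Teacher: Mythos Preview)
Your proof is correct and follows the same overall strategy as the paper: invoke Theorem~\ref{thm:amicableCMconj} at each step, use distinctness of the primes (which requires $\ell\ge3$) to exclude the alternative $p_{i+1}=p_{i-1}$, obtain the recurrence $p_{i+1}=2p_i+2-p_{i-1}$ around the whole cycle, and then show this recurrence cannot close up.

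Where you differ is only in the endgame. The paper solves the recursion explicitly as $A_i=(i-1)p_2-(i-2)p_1+(i-1)(i-2)$, imposes the closure condition $A_{\ell+1}=p_1$ to deduce $p_1=p_2+\ell-1$, repeats this after each cyclic shift to get $p_i=p_{i+1}+\ell-1$ for every~$i$, and reaches the contradiction $p_1>p_2>\cdots>p_\ell>p_1$. Your first-difference/telescoping argument ($d_i-d_{i-1}=2$, summed cyclically to $0=2\ell$) reaches the contradiction in one line without ever solving the recursion or invoking cyclic shifts, and is the more economical route.
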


%% Probably can't have one with p=3 either, but would need to check.

\begin{remark}
There are various ways in which one might generalize
Theorem~\ref{thm:amicableCMconj}.  For example, 
replacing assumption~(2) by the assumption that~$L$ is an
integer such that the quantity
\[
  q = L^2 - \bigl(p+1-\#\Etilde_p(\FF_p)\bigr)L+p
\]
is prime and splits in~$K$ leads to the following conclusion:
\[
  a_q(E) = \pm \bigl(a_p(E)+2L\bigr).
\]
Theorem~\ref{thm:amicableCMconj} is the case $L=1$. We omit the proof
of the generalization, since it is similar and is not required in this
paper.
\end{remark}

\begin{remark}
\label{remark:aliquotcyclej0}
Corollary~\ref{corollary:CMhasnoaliqge3} omits curves with~$j(E)=0$.
It turns out that $j=0$ curves possess a rich and complicated amicable
pair structure which will be investigated in detail in
Section~\ref{section:amicableCMj0}.
Corollary~\ref{corollary:j0explicit} gives an analogue of
Theorem~\ref{thm:amicableCMconj} saying that there are (often) six
possible values for $\#\Etilde_q(\FF_q)$, rather than only the two
possibilities given in Theorem~\ref{thm:amicableCMconj}.  Using this
result, we are able to prove by a detailed case-by-case analysis that
$j=0$ curves cannot have aliquot cycles of length three; see
Appendix~\ref{appendix:j0aliqtriples}. But we do not have a proof that
there are no aliquot cycles of length greater than three.
\end{remark}

Before commencing the proofs of Theorem~\ref{thm:amicableCMconj} and
Corollary~\ref{corollary:CMhasnoaliqge3}, we prove a basic result
concerning the splitting of primes in CM fields.

\begin{lemma}
\label{lemma:psplits}
Let $E/\QQ$ be an elliptic curve with complex multiplication by~$K$,
let~$p\ge5$ be a prime of good reduction for~$E/\QQ$, and suppose that
$\#\Etilde_p(\FF_p)$ is odd. Then~$p$ splits in~$K$.
\end{lemma}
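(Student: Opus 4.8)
The plan is to deduce the splitting of~$p$ from the fact that the parity hypothesis forces~$\Etilde_p$ to be ordinary, together with the standard interaction between the Frobenius endomorphism and the CM action. First I would rewrite the hypothesis in terms of the trace of Frobenius. Writing $a_p = p+1-\#\Etilde_p(\FF_p)$, the assumption that $\#\Etilde_p(\FF_p)$ is odd, combined with $p$ being odd, forces $a_p$ to be odd; in particular $a_p\ne0$. Since $|a_p|\le2\sqrt{p}<p$ for $p\ge5$, we moreover get $p\nmid a_p$, so $\gcd(a_p,p)=1$.

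Next I would observe that $a_p\ne0$ rules out supersingular reduction: a supersingular elliptic curve over a prime field~$\FF_p$ with $p\ge5$ has $a_p=0$. Hence $\Etilde_p$ is ordinary, and $\End(\Etilde_p)\otimes\QQ$ is an imaginary quadratic field. Because~$E$ has good reduction at~$p$ and $p\ge5$ is coprime to the (small) conductor of the CM order $\Ocal=\End(E)$, reduction induces an injection $\Ocal\hookrightarrow\End(\Etilde_p)$; comparing $\QQ$-dimensions then forces $\End(\Etilde_p)\otimes\QQ=K$. In particular the Frobenius $\pi_p$, a root of $T^2-a_pT+p$, lies in $\Ocal_K$ with $\Tr_{K/\QQ}(\pi_p)=a_p$ and $N_{K/\QQ}(\pi_p)=p$.

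Finally I would extract the splitting of~$p$ from the factorization of $(\pi_p)$. The ideal $(\pi_p)+(\bar\pi_p)$ contains both $\pi_p+\bar\pi_p=a_p$ and $\pi_p\bar\pi_p=p$; since $\gcd(a_p,p)=1$ this ideal is all of $\Ocal_K$, so $(\pi_p)$ and $(\bar\pi_p)$ are coprime. Their product is $(p)$, and each factor has norm~$p$, hence each is a prime of norm~$p$ above~$p$, and the two are distinct. Therefore $(p)$ is a product of two distinct primes, i.e.\ $p$ splits in~$K$.

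I expect the only genuinely delicate point to be the claim that the CM action survives reduction and pins down $\End(\Etilde_p)\otimes\QQ$ as~$K$ itself --- equivalently, that ordinary reduction of a CM curve places Frobenius inside~$K$. This rests on injectivity of the reduction map on endomorphisms for good reduction and on $p$ being coprime to the conductor of~$\Ocal$ (automatic here, since $p\ge5$ and the CM orders occurring for $E/\QQ$ have conductor dividing~$6$). Once this is in hand, the coprimality and norm computation is routine and, pleasantly, excludes the inert and ramified cases simultaneously, so no separate argument is needed to rule out ramification.
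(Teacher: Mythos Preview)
Your argument is correct and matches the paper's approach: oddness of $\#\Etilde_p(\FF_p)$ forces $a_p$ odd hence nonzero, so reduction is ordinary and $K$ is generated by Frobenius $\pi_p$, whence $p=\pi_p\bar\pi_p$ either splits or ramifies. The paper rules out ramification separately (ramified $\Rightarrow p\mid a_p^2-4p\Rightarrow p\mid a_p\Rightarrow a_p=0$), while your coprimality argument $(\pi_p)+(\bar\pi_p)\supseteq(a_p,p)=\Ocal_K$ handles both non-split cases at once---a minor streamlining of the same idea.
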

\begin{proof}
We have
\[
  \#\Etilde_p(\FF_p) = p + 1 - a_p,
\]
so the assumptions that~$p\ne2$ and~$\#\Etilde_p(\FF_p)$ is odd imply
that~$a_p$ is odd, so in particular $a_p\ne0$.  Hence~$E$ has ordinary
reduction at~$p$.  (Note that our assumption that $p\ge5$ and Hasse's
bound $|a_p|\le2\sqrt{p}$ imply that $p\mid a_p$ if and only if
$a_p=0$.)  It follows that the field~$K$ is isomorphic
to~\text{$\End(\Etilde_p)\otimes\QQ$}, which is generated by a root of
the characteristic polynomial~$T^2-a_pT+p$ of Frobenius.  Therefore
$K=\QQ(\sqrt{a_p^2-4p}\;\bigr)$, and
\[
  p = \left(\frac{a_p+\sqrt{a_p^2-4p}}{2}\right)
      \left(\frac{a_p-\sqrt{a_p^2-4p}}{2}\right)
\]
either splits or is ramified in~$K$. But we can rule out the latter
case by noting that
\[
  \text{$p$ ramified in $K$} 
  \quad\Longrightarrow\quad
  p\mid a_p^2-4p
  \quad\Longrightarrow\quad
  p\mid a_p \quad\Longrightarrow\quad a_p = 0.
\]
This contradicts the fact that~$a_p$ is odd, and hence~$p$ splits
in~$K$.
\end{proof}

%%%%%%%%%%%%%%%%%%%%%%%%%%%%%%%%%%%%%%%%%%%%%%%%%%%%%%%%%%%%%%%%%%%%%%
%% For p=3, the only bad possibility is that 3 is inert in
%% the CM field, so E is supersingular at 3, in which case it is
%% possible to have #E(F_3) = 7, because a_3(E) can equal -3 and still
%% be within the Hasse bound.
%%%%%%%%%%%%%%%%%%%%%%%%%%%%%%%%%%%%%%%%%%%%%%%%%%%%%%%%%%%%%%%%%%%%%%

\begin{proof}[Proof of Theorem~$\ref{thm:amicableCMconj}$]  
Up to $\Qbar$-isomorphism, there are~$13$ elliptic curves defined
over~$\QQ$ that have complex multiplication. For a list, see for
example~\cite[A~\S3]{ATAEC}.  There are three isomorphism classes
whose conductor~$N_E$ is a power of two:
\begin{align*}
  E:y^2 &= x^3+x, & N_E = 2^6, \\
  E:y^2 &= x^3+6x^2+x, & N_E = 2^5, \\
  E:y^2 &= x^3+4x^2+2x, & N_E = 2^8.
\end{align*}
All three of these curves have a nontrivial two-torsion point, as do
all of their~$\Qbar/\QQ$ twists, so $\#E(\FF_p)$ is even for all
$p\ge3$. Hence none of these curves admit an amicable pair;
cf.\ Remark~\ref{remark:noaliqiftors}. The remaining CM curves have
complex multiplication by a field~$\QQ(\sqrt{-D})$ with
$D\equiv3\pmod{4}$.
\par
The endomorphism ring of~$E$ is an order in the field
$K=\QQ(\sqrt{-D})$, where $D\equiv3\pmod4$, so it has the form
\[
  \End(E) \cong
  \Ocal = \ZZ + f\ZZ\left[\frac{1+\sqrt{-D}}{2}\right]
\]
for some integer~$f\ge1$, which is called the conductor of~$\Ocal$.
In particular, we have~$\Ocal^*=\{\pm1\}$, since our assumption
that~$j(E)\ne0$ excludes the case $(D,f)=(3,1)$.
\par
The theory of complex multiplication says that there is a
Gr\"ossen\-charac\-ter~$\psi_E$ such that for every prime ideal~$\gp$
of~$\Ocal_K$ of residue characteristic~$p\ge5$ at which~$E$ has
good reduction, we have
\begin{itemize}
\setlength{\itemsep}{5pt}
\item[(i)]
$\psi_E(\gp)\in\Ocal$ with $\psi_E(\gp)\Ocal_K = \gp$.
\item[(ii)]
$\#\Etilde_\gp(\FF_\gp) = \Norm_{K/\QQ}(\gp)+1-\Trace\bigl(\psi_E(\gp)\bigr)$.
\end{itemize}
See, for example,~\cite[Proposition~4.1]{RubinSilverberg}
or~\cite[II~\S9]{ATAEC}. (Note that
our assumption that~$\gp$ has residue characteristic~$p\ge5$ implies
that~$p$ does not divide the conductor of~$\Ocal$, since our assmption
that~$\Ocal$ has class number one implies that the conductor
of~$\Ocal$ is at most~$3$.)
\par
We are given that $p\ge5$ and that $\#\Etilde_p(\FF_p)=q$ is prime. 
It follows from Lemma~\ref{lemma:psplits} that~$p$
splits in~$K$, say
\[
  p\Ocal_K = \gp\bar\gp.
\]
Then $\FF_p=\FF_\gp$, so
\begin{equation}
  \label{eqn:qfactored1}
  q = \#\Etilde_p(\FF_p) = \#\Etilde_\gp(\FF_\gp)
  = \Norm_{K/\QQ}\bigl(1-\psi_E(\gp)\bigr).
\end{equation}
Notice that this implies, in particular, that~$q$ splits in~$K$.
So writing $q\Ocal_K=\gq\bar\gq$, we have  
\begin{equation}
  \label{eqn:qfactored2}
  q = \Norm_{K/\QQ}\bigl(\psi_E(\gq)\bigr).
\end{equation}
Comparing~\eqref{eqn:qfactored1} and~\eqref{eqn:qfactored2},
and using the fact that~$\psi_E(\gp)$ and~$\psi_E(\gq)$ are in~$\Ocal$,
we see that there is a unit~$u\in\Ocal^*$ such that either
\begin{equation}
  \label{eqn:psiEgqgp}
  \psi_E(\gq) = u\bigl(1-\psi_E(\gp)\bigr)
  \qquad\text{or}\qquad
  \psi_E(\gq) = u\overline{\bigl(1-\psi_E(\gp)\bigr)}.
\end{equation}
(This follows from the fact that the factorization of the
ideal~$q\Ocal$ is unique, up to switching the factors.)
\par
As noted earlier, we have~$\Ocal^*=\{\pm1\}$, so
\begin{align*}
  \Trace\bigl(\psi_E(\gq)\bigr)
  &= \pm \Trace\bigl(1-\psi_E(\gp)\bigr)
    &&\text{from~\eqref{eqn:psiEgqgp} with $u=\pm1$,} \\
  &= \pm \left(2 - \Trace\bigl(\psi_E(\gp)\bigr)\right) 
    &&\text{linearity,}\\
  &= \pm \bigl(2 - (p+1-q)\bigr)
    &&\text{since $\#\Etilde_p(\FF_p)=q$,} \\
  &= \pm (q+1-p).
\end{align*}
Hence
\[
  \#\Etilde_q(\FF_q)
  = \#\Etilde_\gq(\FF_\gq)
  = q + 1 - \Trace\bigl(\psi_E(\gq)\bigr)
  = q + 1 \pm (q+1-p).
%%  = \text{$p$ or $2q+2-p$}.
\]
This completes the proof of Theorem~\ref{thm:amicableCMconj}.
\end{proof}

\begin{proof}[Proof of Corollary~$\ref{corollary:CMhasnoaliqge3}$]
Let $(p_1,p_2,\ldots,p_\ell)$ be an aliquot cycle of length $\ell\ge3$
for~$E/\QQ$ such that all $p_i\ge5$.  Since the primes in the cycle
are distinct, Theorem~$\ref{thm:amicableCMconj}$ tells us that
\[
  p_i = 2p_{i-1}+2-p_{i-2}
  \quad\text{for $3\le i\le \ell$.}
\]
Further, since the term in the aliquot sequence following~$p_\ell$
is~$p_1$, we have
\begin{equation}
  \label{eqn:p12pell}
  p_1 = 2p_\ell + 2 - p_{\ell-1}.
\end{equation}
\par
Consider the linear recursion 
\[
  A_1=p,\quad A_2=q,\quad   A_i = 2A_{i-1}+2-A_{i-2}\quad\text{for $i\ge3$.}
\]
A simple calcuation shows that the general term of this recursion is
given by the formula
\begin{equation}
  \label{eqn:Airecursion}
  A_i = (i-1)q - (i-2)p + (i-1)(i-2).
\end{equation}
Hence the right-hand side of~\eqref{eqn:p12pell}, which corresponds
to~$p_{\ell+1}$, is equal to
\[
  \ell p_2 - (\ell-1) p_1 + \ell(\ell-1).
\]
Equating this with~$p_1$, rearranging terms, and dividing
by~$\ell$, yields
\begin{equation*}
  p_1 = p_2 + \ell - 1.
\end{equation*}
\par
The same argument applied to the aliquot cycle 
\[
  (p_i,p_{i+1},\cdots,p_\ell,p_1,p_2,\ldots,p_{i-1})
\]
obtained by cyclically permuting the terms in the original cycle
yields
\[
  p_i = p_{i+1} + \ell - 1
  \quad\text{for all $1\le i\le \ell$,}
\]
where we set $p_{\ell+1}=p_1$. 
Since $\ell>1$, this shows that $p_{i}>p_{i+1}$ (strict inequality). Hence
\[
  p_1 > p_2 > p_3 > \cdots > p_\ell > p_{\ell+1}=p_1.
\]
This contradiction completes the proof of
Corollary~\ref{corollary:CMhasnoaliqge3}.
\end{proof}

We now use Theorem~\ref{thm:amicableCMconj} to give an heuristic
justification for the following conjecture.

\begin{conjecture}
\label{conj:CMamicable}
Let $E/\QQ$ be an elliptic curve with with complex multiplication, and
assume that $j(E)\ne0$.  Define counting functions
\begin{align*}
   \Ncal_E(X) &= \#\bigl\{\text{primes $p\le X$ such that $\#\Etilde_p(\FF_p)$
       is prime}\bigr\}, \\[1\jot]
  \Qcal_E(X)
   &= \#\bigl\{\text{amicable pairs $(p,q)$ for $E/\QQ$ 
   with $p<q$ and  $p\le X$}\bigr\}.
\end{align*}
Then either $\Ncal_E(X)$ is bounded, or else
\[
  \lim_{X\to\infty} \frac{\Qcal_E(X)}{\Ncal_E(X)} = \frac{1}{4}.
\]
\end{conjecture}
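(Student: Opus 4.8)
The plan is to give a probabilistic heuristic built directly on Theorem~\ref{thm:amicableCMconj}, since the statement is a conjecture whose truth hinges on an equidistribution hypothesis. The first observation is that the dichotomy in the theorem converts the amicable-pair condition into a single binary choice. Concretely, for every prime $p\ge5$ counted by $\Ncal_E(X)$ the value $q=\#\Etilde_p(\FF_p)$ is prime, and Theorem~\ref{thm:amicableCMconj} forces $\#\Etilde_q(\FF_q)$ to equal either $p$ or $2q+2-p$; the pair $(p,q)$ is amicable precisely when the first alternative holds. Tracing through the proof of the theorem, the two alternatives correspond exactly to the two signs in $\Trace\bigl(\psi_E(\gq)\bigr)=\pm(q+1-p)$, that is, to the choice of unit $u=\pm1$ and of conjugation in~\eqref{eqn:psiEgqgp}.

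The heuristic input is that this sign is equidistributed: as $p$ ranges over the primes counted by $\Ncal_E$, each of the two alternatives occurs with asymptotic density $\tfrac12$. Granting this, set
\[
  P_E(X)=\#\bigl\{\text{primes }p\le X : p\text{ is part of an amicable pair}\bigr\}.
\]
A prime $p$ counted by $\Ncal_E$ lies in an amicable pair exactly when the favorable sign occurs, and the partner $q=\#\Etilde_p(\FF_p)$ is then uniquely determined by $p$, so $P_E(X)\sim\tfrac12\Ncal_E(X)$. Next I would relate $P_E(X)$ to $\Qcal_E(X)$. Each amicable pair $(p,q)$ with $p<q$ contributes \emph{both} of its primes to $P_E(X)$, because $\#\Etilde_p(\FF_p)=q$ and $\#\Etilde_q(\FF_q)=p$ are both prime, so both $p$ and $q$ are counted by $\Ncal_E$ and both lie in the pair. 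Since $q=\#\Etilde_p(\FF_p)=p+O(\sqrt p)$, the larger prime satisfies $q\le X$ for all but those pairs whose smaller prime lies in $\bigl(X-O(\sqrt X),X\bigr]$, a boundary set contributing only $O\bigl(\sqrt X/(\log X)^2\bigr)$ pairs, negligible against the expected main term of order $X/(\log X)^2$. Hence the smaller primes $\le X$ number $\Qcal_E(X)$, the larger primes $\le X$ number $\Qcal_E(X)+o\bigl(X/(\log X)^2\bigr)$, and summing gives $P_E(X)\sim2\Qcal_E(X)$.

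Combining the two relations yields $2\Qcal_E(X)\sim P_E(X)\sim\tfrac12\Ncal_E(X)$, whence $\Qcal_E(X)/\Ncal_E(X)\to\tfrac14$ whenever $\Ncal_E(X)\to\infty$; the complementary possibility is exactly the alternative that $\Ncal_E(X)$ is bounded, which accounts for the two-case phrasing of the conjecture. The main obstacle is that the equidistribution of the sign is a genuine assumption rather than a theorem: it is a statement about how the Gr\"ossencharacter value $\psi_E(\gp)$ (equivalently $a_q(E)$) is distributed as $p$ varies over the sparse set of primes with $\#\Etilde_p(\FF_p)$ prime, and proving it appears out of reach, which is why the result is stated only conjecturally. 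To buttress the heuristic I would point to the close agreement with the experiments of Section~\ref{section:amicableCMexper}, and, as an internal consistency check, note that the predicted ratio $\tfrac14$ is compatible with the conjectured orders $\Qcal_E(X)\sim A_E\,X/(\log X)^2$ and $\Ncal_E(X)\sim C_{E/\QQ}\,X/(\log X)^2$ coming from Conjectures~\ref{conj:amicablepair} and~\ref{conj:kobzy}.
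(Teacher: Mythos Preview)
Your heuristic is correct and follows essentially the same route as the paper's own justification: invoke Theorem~\ref{thm:amicableCMconj} to reduce to a binary choice, assume the two outcomes are equidistributed to get that half of the primes counted by~$\Ncal_E$ lie in an amicable pair, and then divide by~$2$ because~$\Qcal_E(X)$ counts normalized pairs. Your treatment of the factor of~$2$ is slightly more careful than the paper's (you explicitly bound the boundary contribution from pairs with $p\le X<q$, whereas the paper simply says to divide by~$2$), but the substance is identical.
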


We note that Conjecture~\ref{conj:kobzy} says that if $\Ncal_E(X)$ is
unbounded, then it is asymptotic to $C_{E/\QQ}X/(\log X)^2$. So the
combination of Conjectures~\ref{conj:kobzy} and~\ref{conj:CMamicable}
gives a strengthened version of the~CM part of
Conjecture~\ref{conj:amicablepair}.

Our justification for Conjecture~\ref{conj:CMamicable} is to
observe that Theorem~\ref{thm:amicableCMconj} says that if
$\#\Etilde_p(\FF_p)=q$ is prime, then there are two possibilities for
$\#\Etilde_q(\FF_q)$, one of which is~$p$.  Experiments indicate that
each possibility occurs with equal probability, and we have no
theoretical reasons for expecting otherwise, so we will accept the
hypothesis that
\[
  \Prob\bigl(\#\Etilde_q(\FF_q)=p \bigm| 
         \text{$\#\Etilde_p(\FF_p)=q$ is prime} \bigr)
  = \frac{1}{2}.
\]
Further, if we assume Conjecture~\ref{conj:kobzy}, then
\[
  \Prob\bigl(\text{$\#\Etilde_p(\FF_p)$ is prime}\bigm| p\le X \bigr)
  \sim \frac{\Ncal_E(X)}{\pi(X)}.
\]
Combining these estimates yields
\begin{align*}
  \#\bigl\{p\le X &{} : 
    \text{$\#\Etilde_p(\FF_p)=q$ is prime and $\#\Etilde_q(\FF_q)=p$}\bigr\} \\
  &\approx \sum_{p\le X}
    \Prob\bigl(\#\Etilde_q(\FF_q)=p \quad\text{and}\quad
         \text{$\#\Etilde_p(\FF_p)=q$ is prime} \bigr) \\
  &\approx \smash[b]{\sum_{p\le X}}
    \Prob\bigl(\#\Etilde_q(\FF_q)=p \bigm|
         \text{$\#\Etilde_p(\FF_p)=q$ is prime} \bigr)  \\
  &\omit\hfill$\displaystyle
       {}\times\Prob\bigl(\text{$\#\Etilde_p(\FF_p)$ is prime} \bigr)$  \\
  &\approx \sum_{p\le X}
    \frac{1}{2}\cdot  \frac{\Ncal_E(X)}{\pi(X)} \\
  & = \frac{\Ncal_E(X)}{2}.
\end{align*}
Finally, we need to divide by~$2$, because~$\Qcal_E(X)$ only counts
amicable pairs~$(p,q)$ that are normalized to satisfy~$p<q$.

%%%%%%%%%%%%%%%%%%%%%%%%%%%%%%%%%%%%%%%%%%%%%%%%%%%%%%%%%%%%%%%%%%%%%%
\section{Amicable pairs for CM curves with $j=0$} 
\label{section:amicableCMj0}
%%%%%%%%%%%%%%%%%%%%%%%%%%%%%%%%%%%%%%%%%%%%%%%%%%%%%%%%%%%%%%%%%%%%%%

In this section we study elliptic curves having $j$-invariant zero.
The analysis of amicable pairs on these curves is significantly more
complicated than on all other CM elliptic curves, due primarily to the
extra units in the endomorphism ring.  In particular, experiments
described in Section~\ref{section:amicableCMexper} suggest that the
limiting value of $\Qcal_E(X)/\Ncal_E(X)$ for the curve $y^2=x^3+k$
varies for different values of~$k$; see
Conjecture~\ref{conjecture:N1density}.

We continue with the Gr\"ossen\-charac\-ter notation from the previous
section and set some additional notation that will remain in effect
for this section.  We let
\[
  \o=\frac{1+\sqrt{-3}}{2},\qquad
  K=\QQ(\sqrt{-3}),\qquad \Ocal_K=\ZZ[\o],
\]
so~$\o$ is a primitive sixth root of unity.  We note that the
unit group $(\Ocal_K/3\Ocal_K)^*$ is a group of order~$6$, and that the
natural map
\[
  \bfmu_6 = \Ocal_K^* \xrightarrow{\;\sim\;}(\Ocal_K/3\Ocal_K)^*
\]
is an isomorphism. 
Further, for any prime ideal~$\gp$ of~$\Ocal_K$ that is relatively
prime to~$3$ and any~$\a\in\Ocal_K\setminus\gp$, we recall that the
sextic residue symbol~$\SS{\a}{\gp}$ is defined by the conditions
\[
  \SS{\a}{\gp}\in\bfmu_6
  \qquad\text{and}\qquad
  \SS{\a}{\gp} \equiv \a^{\frac16(\Norm_{K/\QQ}\gp-1)} \pmod\gp.
\]

\begin{theorem}
\label{theorem:j0charctr}
Let $k\in\ZZ$ be a nonzero integer, let~$E/\QQ$ be the elliptic curve
\[
  E:y^2=x^3+k,
\] 
so~$E$ has~CM by~$\Ocal_K$, and let~$\psi_E$ be the
Gr\"ossen\-charac\-ter associated to~$E$.  Suppose that~$p\ge5$ and~$q\ge5$
are primes of good reduction for~$E$ such that
\[
  \#\Etilde_p(\FF_p)=q.
\]
\vspace{-10pt}
\begin{parts}
\Part{(a)}
The prime~$p$  splits in~$K$, say $p\Ocal_K=\gp\bar\gp$, and satisfies
\[
  \psi_E(\gp)\bigl(1-\psi_E(\gp)\bigr) \equiv1\pmod{3\Ocal_K}.
\]
\Part{(b)}
The ideal defined by
\[
  \gq = \bigl(1-\psi_E(\gp)\bigr)\Ocal_K
  \quad\text{satisfies}\quad
  q\Ocal_K = \gq\bar\gq.
\]
In particular, the prime~$q$ splits in~$K$.
\Part{(c)}
The values of the Gr\"ossencharacter at~$\gp$ and~$\gq$ are related by
\begin{equation}
  \label{eqn:trgqtrgp}
  1-\psi_E(\gp) = \SS{4k}{\gp}\SS{4k}{\gq}\psi_E(\gq).
\end{equation}
\Part{(d)}
Let $\e\in\{\pm1\}$. 
Then the trace $a_q(E)=q+1-\#\Etilde_q(\FF_q)$ satisfies
\begin{equation}
  \label{eqn:pqrecipj0}
  a_q(E)=\e(q+1-p)
  \quad\Longleftrightarrow\quad  
  \SS{4k}{\gp}\SS{4k}{\gq} = \e.
\end{equation}
\end{parts}
\end{theorem}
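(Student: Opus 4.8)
The plan is to read off (a) and (b) from Lemma~\ref{lemma:psplits} and the point-counting formula~(ii), to reduce (c) to a single unit computation fed by the explicit Gr\"ossencharacter of $y^2=x^3+k$ together with sextic reciprocity, and to deduce (d) from (c) by a trace-and-norm argument. For (a), since $q=\#\Etilde_p(\FF_p)\ge5$ is prime it is odd, so Lemma~\ref{lemma:psplits} gives $p\Ocal_K=\gp\bar\gp$. Writing $\pi=\psi_E(\gp)$, formula~(ii) with $\Norm\gp=\pi\bar\pi$ and $\Trace\pi=\pi+\bar\pi$ yields
\[
  q=p+1-\Trace\pi=\pi\bar\pi-\pi-\bar\pi+1=\Norm_{K/\QQ}(1-\pi),
\]
which is also the identity needed for (b). For the congruence I would use that $3$ ramifies, $3\Ocal_K=(\sqrt{-3})^2$ with residue field $\FF_3$. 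Since $p=\Norm(\pi)$ and $q=\Norm(1-\pi)$ are both prime to~$3$, neither $\pi$ nor $1-\pi$ lies in $(\sqrt{-3})$, and as the residue field is $\FF_3$ this forces $\pi\equiv-1\pmod{\sqrt{-3}}$. Writing $\pi=-1+b\sqrt{-3}$ and using $(\sqrt{-3})^2\equiv0\pmod3$, a one-line expansion gives $\pi(1-\pi)\equiv-2\equiv1\pmod{3\Ocal_K}$. Part (b) is then immediate: $\Norm\gq=\Norm(1-\pi)=q$ is a rational prime different from~$3$, so $\gq$ has residue degree one and $q\Ocal_K=\gq\bar\gq$.

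For (c) the essential input is the explicit description, from the theory of complex multiplication, of $\psi_E$ for $E:y^2=x^3+k$: the dependence on~$k$ is carried entirely by the sextic residue symbols $\SS{4k}{\gp}$ and $\SS{4k}{\gq}$, so that $\psi_E(\gp)$ equals $\SS{4k}{\gp}$ (up to the conjugation/inversion convention in the formula) times a generator of~$\gp$ normalized independently of~$k$, and likewise at~$\gq$. By (b) both $1-\psi_E(\gp)$ and $\psi_E(\gq)$ generate~$\gq$, hence differ by a unit $u\in\bfmu_6$, and the goal is to show $u=\SS{4k}{\gp}\SS{4k}{\gq}$. I would substitute the explicit formula at both~$\gp$ and~$\gq$, cancel the $k$-independent generators against $1-\psi_E(\gp)$ using the mod-$\sqrt{-3}$ normalization pinned down in~(a), and then evaluate the resulting symbol at~$\gq$ in terms of data at~$\gp$ by sextic reciprocity and its supplementary laws (at the units and at the ramified prime $\sqrt{-3}$), so that the two contributions collapse into the single product $\SS{4k}{\gp}\SS{4k}{\gq}$.

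For (d), write $u=\SS{4k}{\gp}\SS{4k}{\gq}$ and $\beta=1-\psi_E(\gp)$, so that (c) reads $\psi_E(\gq)=\bar u\,\beta$ (using $\bar u=u^{-1}$ for $u\in\bfmu_6$), whence $a_q(E)=\Trace\psi_E(\gq)=\Trace(\bar u\beta)$ while $\Trace\beta=2-a_p=q+1-p$. If $u=\e\in\{\pm1\}$ then $\bar u=\e$ is real and $a_q=\e\,\Trace\beta=\e(q+1-p)$, giving one implication. For the converse, suppose $a_q=\e(q+1-p)$; then $\Trace\bigl((\bar u-\e)\beta\bigr)=0$, so $(\bar u-\e)\beta$ is a trace-zero element of~$\Ocal_K$, that is $(\bar u-\e)\beta=c\sqrt{-3}$ with $c\in\ZZ$. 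Taking norms gives $\Norm(\bar u-\e)\cdot q=3c^2$; but $\Norm(\bar u-\e)\le4<5\le q$ for $u\in\bfmu_6$ and $\e\in\{\pm1\}$, so $q\mid c$, and a size comparison then forces $c=0$, whence $\bar u=\e$ and $u=\e$ (here $\beta\ne0$ since $\Norm\beta=q$).

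The main obstacle is (c), and within it the reciprocity bookkeeping. Pinning down~$u$ exactly, rather than up to an unspecified sixth root of unity, demands careful control of the conjugation and inversion conventions in the Gr\"ossencharacter formula and a correct application of sextic reciprocity together with the supplementary laws at~$\sqrt{-3}$ and at the units. This is precisely what forces the unit to be real when $\SS{4k}{\gp}\SS{4k}{\gq}=\pm1$ and, more generally, what accounts for the six possible values of $\#\Etilde_q(\FF_q)$ recorded in Remark~\ref{remark:aliquotcyclej0}.
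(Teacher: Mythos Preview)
Your treatment of (a), (b), and (d) is essentially correct. For (a) you work modulo $\sqrt{-3}$ directly rather than via the isomorphism $\bfmu_6\cong(\Ocal_K/3\Ocal_K)^*$ as the paper does, and for (d) you inline a norm argument in place of the paper's Lemma~\ref{lemma:equaltraces}; both variants are fine.

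The gap is in (c). You correctly reduce the problem to identifying the unit $u\in\bfmu_6$ with $1-\psi_E(\gp)=u\,\psi_E(\gq)$, and you correctly point to the explicit formula $\psi_E(\gp)=-\SS{4k}{\gp}^{-1}\pi$ with $\pi\equiv2\pmod{3\Ocal_K}$. But then you invoke sextic reciprocity and its supplementary laws to ``evaluate the resulting symbol at~$\gq$ in terms of data at~$\gp$.'' This is unnecessary and misreads the target: the statement already has $\SS{4k}{\gq}$ in it, so nothing needs to be moved from~$\gq$ back to~$\gp$. The paper's argument is a two-line mod-$3$ computation with no reciprocity at all. Reducing the explicit formula modulo~$3$ gives $\psi_E(\gp)\equiv\SS{4k}{\gp}^{-1}$ and $\psi_E(\gq)\equiv\SS{4k}{\gq}^{-1}\pmod{3\Ocal_K}$ (since $-\pi\equiv-2\equiv1$). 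Combining this with your own part~(a), namely $\psi_E(\gp)\bigl(1-\psi_E(\gp)\bigr)\equiv1\pmod{3\Ocal_K}$, yields
\[
  u=\frac{1-\psi_E(\gp)}{\psi_E(\gq)}
   \equiv\frac{1}{\psi_E(\gp)\,\psi_E(\gq)}
   \equiv\SS{4k}{\gp}\SS{4k}{\gq}\pmod{3\Ocal_K},
\]
and since the reduction $\bfmu_6\to(\Ocal_K/3\Ocal_K)^*$ is injective, this congruence is an equality. That is the whole argument; reciprocity plays no role here (it appears only later, in Proposition~\ref{proposition:kpworw5} and in the justification of Conjecture~\ref{conjecture:N1kN1M1kM1}).
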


\begin{remark}
The expressions in~\textup{(c)} and~\textup{(d)} appear naturally in
the course of proving Theorem~\ref{theorem:j0charctr}, but we note that
they may be simplified using Proposition~$\ref{proposition:kpworw5}$,
which says that $\SS{4}{\gp}\SS{4}{\gq}=1$. This allows us to
rewrite~\eqref{eqn:trgqtrgp} and~\eqref{eqn:pqrecipj0} as
\begin{gather}
  \label{eqn:trgqtrgpalt}
  \tag{$\ref{eqn:trgqtrgp}'$}
  1-\psi_E(\gp) = \SS{k}{\gp}\SS{k}{\gq}\psi_E(\gq),  \\
  \label{eqn:pqrecipj0alt}
  \tag{$\ref{eqn:pqrecipj0}'$}
  a_q(E)=\pm(q+1-p)
  \quad\Longleftrightarrow\quad  
  \SS{k}{\gp}\SS{k}{\gq} = \pm1.
\end{gather}
\end{remark}

\begin{proof}
The fact that~$p$ splits in~$\Ocal_K$ follows from
Lemma~\ref{lemma:psplits}, which proves the first part of~(a). Next,
as noted during the proof of Theorem~\ref{thm:amicableCMconj}, the
Gr\"ossen\-charac\-ter of a CM elliptic curve satisfies
\[
  \#\Etilde_p(\FF_p) = \Norm_{K/\QQ}\bigl(\psi_E(\gp)\bigr) 
            + 1 - \Trace_{K/\QQ}\bigl(\psi_E(\gp)\bigr).
\]
Using the given value  $q=\#\Etilde_p(\FF_p)$, this can be
written as
\[
  q = \Norm_{K/\QQ}\bigl(1-\psi_E(\gp)\bigr).
\]
Hence~$\gq=\bigl(1-\psi_E(\gq)\bigr)\Ocal_K$ satisfies
$q\Ocal_K=\gq\bar\gq$, which proves~(b). 

Further, both~$\psi_E(\gp)$ and $1-\psi_E(\gp)$ have norms
that are relatively prime to~$3$. This implies first that
$\psi_E(\gp)\equiv\o^j\pmod{3}$ for some~$j\in\ZZ$, and second
that~$j$ is odd, since otherwise~$1-\o^j$ would be divisible
by~$\sqrt{-3}$.  On the other hand, for any odd value of~$j$ it is
easy to check that
\[
  (1-\o^j)\o^j \equiv 1 \pmod{3\Ocal_K},
\]
so we find that
\begin{equation}
  \label{eqn:1psiEgppsiEgp}
  \psi_E(\gp)\bigl(1-\psi_E(\gp)\bigr) \equiv 1  \pmod{3\Ocal_K}.
\end{equation}
This proves the second assertion in~(a).

For the proof of~(c) we  use the explicit formula for the
Gr\"ossen\-charac\-ter of curves of the form~$y^2=x^3+k$ in terms of
sextic residue symbols.  This formula says that $\psi_E(\gp) =
-\SS{4k}{\gp}^{-1}\pi$, where the generator~$\pi$ is a primary
generator for~$\gp$, i.e., $\pi\equiv2\pmod{3\Ocal_K}$.  (See
\cite[Chapter~18, Theorem~4 and Section~7]{IrelandRosen}
or~\cite[Proposition~4.1]{RubinSilverberg}.)  Reducing this formula
for~$\psi_E$ modulo~$3$ and applying it to both of the
primes~$\gp$ and~$\gq$, we obtain
\begin{equation}
  \label{eqn:psiEpqSS}
  \psi_E(\gp) \equiv \SS{4k}{\gp}^{-1} \pmod{3\Ocal_K}
  \quad\text{and}\quad
  \psi_E(\gq) \equiv \SS{4k}{\gq}^{-1} \pmod{3\Ocal_K}.
\end{equation}

By definition, the ideal~$\gq$ is generated by~$1-\psi_E(\gp)$.  On
the other hand, the Gr\"ossen\-charac\-ter has the property
that~$\psi_E(\gq)$ generates the ideal~$\gq$.  It follows that there
is a unit~$u\in\Ocal_K^*=\bfmu_6$ such that
\text{$1-\psi_E(\gp)=u\psi_E(\gq)$}.  Using~\eqref{eqn:1psiEgppsiEgp}
and~\eqref{eqn:psiEpqSS}, we find that
\[
  u = \frac{1-\psi_E(\gp)}{\psi_E(\gq)}
  \equiv \frac{1}{\psi_E(\gp)\psi_E(\gq)}
  \equiv \SS{4k}{\gp} \SS{4k}{\gq}
  \pmod{3\Ocal_K}.
\]
Since a sixth root of unity is determined by its residue modulo~$3$, 
this last congruence is an equality, which completes the 
proof of~(c).
\par
Using the defining property of the Gr\"ossen\-charac\-ter and
formula~\eqref{eqn:trgqtrgp} from~(c), we have
\[
  a_q(E) = \Trace_{K/\QQ}\bigl(\psi_E(\gq)\bigr)
  = \Trace_{K/\QQ}\left(\SS{4k}{\gp}^{-1} \SS{4k}{\gq}^{-1}
       \bigl(1-\psi_E(\gp)\bigr)\right).
\]
Similarly, using the assumption that~$\#\Etilde_p(\FF_p)=q$, we
find that
\[
  \Trace_{E/\QQ}\bigl(1-\psi_E(\gp)\bigr)
  = 2 - \Trace_{E/\QQ}\bigl(\psi_E(\gp)\bigr)
  = 2 - (p+1-q)
  = q + 1 - p.
\]
Hence for~$\e\in\{\pm1\}$, we have
\begin{multline*}
  a_q(E) = \e(q+1-p) 
  \quad\Longleftrightarrow\quad\\*
  \Trace_{K/\QQ}\left(\e\SS{4k}{\gp}^{-1} \SS{4k}{\gq}^{-1}
       \bigl(1-\psi_E(\gp)\bigr)\right)
  = \Trace_{E/\QQ}\bigl(1-\psi_E(\gp)\bigr).
\end{multline*}
We now use the following lemma, which may be applied because the
quantity $\Norm_{E/\QQ}\bigl(1-\psi_E(\gp)\bigr)=q$ is neither a
square nor~$3$~times a square. The lemma allows us to conclude that
\[
  a_q(E) = \e(q+1-p) 
  \quad\Longleftrightarrow\quad
  \e\SS{4k}{\gp}^{-1} \SS{4k}{\gq}^{-1}=1,
\]
 which completes the proof of~(e).
\end{proof}

\begin{lemma}
\label{lemma:equaltraces}
Let $\a\in\Ocal_K$ have the property the~$\Norm_{K/\QQ}(\a)$ is neither a
square nor~$3$ times a square.
Then
\[
  \Trace_{K/\QQ}(\z\a)=\Trace_{K/\QQ}(\a)\quad\text{with $\z\in\bfmu_6$}
  \quad\Longleftrightarrow\quad  
  \z = 1.
\]
\end{lemma}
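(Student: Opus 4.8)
The plan is to reduce the lemma to a finite check over the five nontrivial sixth roots of unity, showing in each case that the trace identity forces $\Norm_{K/\QQ}(\a)$ to be a perfect square or three times a perfect square, contrary to hypothesis. The reverse implication $\z=1\Rightarrow\Tr_{K/\QQ}(\z\a)=\Tr_{K/\QQ}(\a)$ is trivial, so only the forward direction needs argument.

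First I would fix integer coordinates by writing $\a=a+b\o$ with $a,b\in\ZZ$, which is legitimate since $\Ocal_K=\ZZ[\o]$. Using $\bar\o=1-\o$ and $\o^2=\o-1$, a direct computation recovers the standard formulas
\[
  \Tr_{K/\QQ}(\a)=2a+b,\qquad \Norm_{K/\QQ}(\a)=a^2+ab+b^2 .
\]
The key observation is that $\Tr_{K/\QQ}(\z\a)=\Tr_{K/\QQ}(\a)$ is equivalent to $\Tr_{K/\QQ}\bigl((\z-1)\a\bigr)=0$, which for each fixed $\z$ is a single $\QQ$-linear condition on the pair $(a,b)$.

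Next I would run through the five units $\z\in\bfmu_6\setminus\{1\}=\{-1,\o,\o^2,-\o,-\o^2\}$. Writing $\Tr_{K/\QQ}(\z\a)=a\,\Tr_{K/\QQ}(\z)+b\,\Tr_{K/\QQ}(\z\o)$ and recording the traces of $\o^0,\dots,\o^5$, each equation $\Tr_{K/\QQ}(\z\a)=2a+b$ simplifies to one linear relation: for instance $\z=-1$ gives $2a+b=0$, $\z=\o$ gives $a=-2b$, and $\z=\o^2$ gives $a=-b$, with the remaining two cases analogous. Substituting each relation into $\Norm_{K/\QQ}(\a)=a^2+ab+b^2$ then collapses the form: the cases $\z\in\{-1,\o,-\o^2\}$ yield $\Norm_{K/\QQ}(\a)=3(\,\cdot\,)^2$, while the cases $\z\in\{\o^2,-\o\}$ yield $\Norm_{K/\QQ}(\a)=(\,\cdot\,)^2$. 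In every case the norm is a square or three times a square, contradicting the hypothesis; hence no nontrivial $\z$ satisfies the identity, and $\z=1$.

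There is no genuine obstacle here: the content is simply that the binary quadratic form $a^2+ab+b^2$ representing $\Norm_{K/\QQ}$ degenerates to a square or to three times a square along each of the five lines cut out by $\Tr_{K/\QQ}\bigl((\z-1)\a\bigr)=0$. A more conceptual variant would write $(\z-1)\a=c\sqrt{-3}$ and use $\Norm_{K/\QQ}(\a)=3c^2/\Norm_{K/\QQ}(\z-1)$ together with $\Norm_{K/\QQ}(\z-1)\in\{1,3,4\}$; but tracking the integrality of $c$ in the case $\Norm_{K/\QQ}(\z-1)=4$ (where $\z=-1$) is slightly delicate, so I would present the integer-coordinate case check, which keeps everything in $\ZZ$ and is transparently a finite verification.
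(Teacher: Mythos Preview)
Your argument is correct. Both directions are handled, the integer-coordinate computations are right, and the five cases do collapse the norm form as you claim.

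The paper takes precisely the ``conceptual variant'' you sketch at the end: it rewrites $\Trace_{K/\QQ}\bigl((\z-1)\a\bigr)=0$ as $(\z-1)\a=c\sqrt{-3}$ with $c\in\ZZ$, lists the five values of $\sqrt{-3}/(\z-1)$, observes their norms lie in $\{1,3,\tfrac34\}$, and concludes that $\Norm_{K/\QQ}(\a)\in\{c^2,3c^2,3(c/2)^2\}$. Your instinct about the $\z=-1$ case is on point: the paper does not explicitly argue that $c$ is even there, though it follows easily from $c\sqrt{-3}=c(2\o-1)\in 2\Ocal_K$. So your coordinate approach trades a small amount of elegance for complete transparency on integrality, while the paper's version is shorter and more uniform but leaves that one parity check to the reader. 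Either route is perfectly adequate for a lemma of this size.
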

\begin{proof}
We have
\begin{align*}
  \Trace_{K/\QQ}(\z\a)=\Trace_{K/\QQ}(\a)
  &\quad\Longleftrightarrow\quad  
  \Trace_{K/\QQ}\bigl((\z-1)\a\bigr) = 0 \\
  &\quad\Longleftrightarrow\quad  
  (\z-1)\a = c\sqrt{-3}~\text{for some $c\in\ZZ$,} \\
  &\quad\Longleftrightarrow\quad  
  \text{$\z=1$ or $ \a = c\dfrac{\sqrt{-3}}{\z-1}$.}
\end{align*}
(Note that $c$ is in $\ZZ$ because $\z$ and~$\a$ are
in~$\Ocal_K=\ZZ[\o]$.) Suppose that~$\z\ne1$. We observe that as~$\z$
ranges over~$\bfmu_6\setminus\{1\}$, the quantity~$\sqrt{-3}/(\z-1)$
takes on the five values
\[
  \left\{2-\o,1-\o,\frac12-\o,-\o,-1-\o\right\}.
\]
The norms of these five numbers form the set~$\{1,3,\frac34\}$, so the
norm of~$\a$ would have the form~$c^2$,~$3c^2$, or~$3(c/2)^2$,
contradicting the assumption on~$\Norm_{K/\QQ}(\a)$.
\end{proof}

We can use Theorem~\ref{theorem:j0charctr} to show that for some
curves with~$j(E)=0$, the conclusion of
Theorem~\ref{thm:amicableCMconj} is true, i.e., there are only two
possible values for~$\#\Etilde_q(\FF_q)$.

\begin{corollary}
\label{cor:2d3j0}
Let $d\in\ZZ$ be a nonzero integer, and let~$E$ be the
elliptic curve $E:y^2=x^3+2d^3$.  Let $p$
be a prime with $p\notdivide 6d$ such that $q=\#\Etilde_p(\FF_p)$ is
also prime and satisfies $q\notdivide 6d$. Then
\[
  \#\Etilde_q(\FF_q)=p
  \qquad\text{or}\qquad
  \#\Etilde_q(\FF_q)=2q+2-p.
\]
\end{corollary}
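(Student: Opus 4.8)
The plan is to deduce the corollary directly from Theorem~\ref{theorem:j0charctr}(d), whose hypotheses are met once we observe that $p\notdivide 6d$ and $q\notdivide 6d$ force $p,q\ge5$ with $p,q\notdivide d$. First I would rephrase the desired conclusion in terms of the trace. Writing $\#\Etilde_q(\FF_q)=q+1-a_q(E)$, the value $\#\Etilde_q(\FF_q)=p$ is the same as $a_q(E)=q+1-p$, while $\#\Etilde_q(\FF_q)=2q+2-p$ is the same as $a_q(E)=-(q+1-p)$. Thus the asserted dichotomy is exactly the statement that $a_q(E)=\e(q+1-p)$ for some $\e\in\{\pm1\}$, and by Theorem~\ref{theorem:j0charctr}(d) this holds if and only if $\SS{4k}{\gp}\SS{4k}{\gq}\in\{\pm1\}$.

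The crux is the elementary observation that for $k=2d^3$ the quantity $4k=8d^3=(2d)^3$ is a perfect cube in $\ZZ\subset\Ocal_K$. Since the sextic residue symbol is multiplicative in its upper entry, we get $\SS{4k}{\gp}=\SS{2d}{\gp}^3$ and $\SS{4k}{\gq}=\SS{2d}{\gq}^3$. As each symbol lies in $\bfmu_6$ and $\o^3=-1$, the cube of any sixth root of unity lies in $\{\pm1\}$. Hence $\SS{4k}{\gp}$ and $\SS{4k}{\gq}$ are each $\pm1$, so their product is automatically $\pm1$. Combined with the previous paragraph, this yields $a_q(E)=\pm(q+1-p)$ and therefore $\#\Etilde_q(\FF_q)\in\{p,\,2q+2-p\}$.

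Finally I would dispatch the bookkeeping: the conditions $p\notdivide 6d$ and $q\notdivide 6d$ guarantee that $\gp$ and $\gq$ are prime to $3$, so the sextic symbols are defined, and that $p,q\notdivide 4k=8d^3$, so $4k\notin\gp$ and $4k\notin\gq$ and the symbols $\SS{4k}{\gp},\SS{4k}{\gq}$ make sense; they also place us within the hypotheses ($p,q\ge5$, good reduction, $q=\#\Etilde_p(\FF_p)$) under which Theorem~\ref{theorem:j0charctr} applies. I do not anticipate any genuine obstacle here: the entire content is the cube identity $4\cdot 2d^3=(2d)^3$, which collapses the product of sextic symbols to a sign. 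The only care required is this routine verification of the divisibility hypotheses.
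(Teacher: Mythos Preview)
Your proposal is correct and follows essentially the same argument as the paper: the key observation is that $4k=(2d)^3$ is a cube, so each of $\SS{4k}{\gp}$ and $\SS{4k}{\gq}$ is the cube of a sixth root of unity and hence lies in $\{\pm1\}$, whence Theorem~\ref{theorem:j0charctr}(d) gives $a_q(E)=\pm(q+1-p)$. The paper's proof is identical in substance, only more terse.
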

\begin{proof}
Using notation from Theorem~\ref{theorem:j0charctr}, we have $k=2d^3$, so
\[
  \SS{4k}{\gp}   = \SS{2d}{\gp}^3 = \pm1
  \quad\text{and}\quad
  \SS{4k}{\gq}   = \SS{2d}{\gq}^3 = \pm1.
\]
It follows from Theorem~\ref{theorem:j0charctr}(d) that
$a_q(E)=\pm(q+1-p)$.
\end{proof}

We next prove two useful facts.

\begin{proposition}
\label{proposition:kpworw5}
Let~$k$,~$E$,~$p$,~$q$,~$\gp$, and~$\gq$ be  as in the statement of
Theorem~$\ref{theorem:j0charctr}$. 
\begin{parts}
\Part{(a)}
$\displaystyle
  \SS{k}{\gp} = \o\quad\text{or}\quad \o^5.
$
\par\vspace{2\jot}
\Part{(b)}
$\displaystyle
  \SS{2}{\gp}\SS{2}{\gq} = \LS{2}{p}_\QQ\LS{2}{q}_\QQ,
$
so in particular,
$\displaystyle
  \SS{2}{\gp}\SS{2}{\gq} =\pm1.
$
\end{parts}
\textup(In~\textup{(b)}, $\LS{\;\cdot\;}{\cdot}_\QQ$ denotes the usual
quadratic residue symbol in~$\ZZ$.\textup)

\end{proposition}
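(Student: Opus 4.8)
The plan is to reduce both parts to the two basic avatars of the sextic symbol. For $\a\in\ZZ$ we have, since $p$ splits (Lemma~\ref{lemma:psplits}) with $\Norm\gp=p$, that the cube $\SS{\a}{\gp}^3\equiv \a^{(p-1)/2}\pmod\gp$ is the Legendre symbol $\LS{\a}{p}_\QQ$, while the square $\SS{\a}{\gp}^2\equiv \a^{(p-1)/3}\pmod\gp$ is the cubic residue character $\CS{\a}{\gp}$. As an element of $\bfmu_6$ is determined by its square and its cube, controlling these two quantities suffices to pin down each symbol.

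For part~(a) I would first show $\SS{k}{\gp}^3=-1$. By~\eqref{eqn:psiEpqSS}, $\psi_E(\gp)\equiv\SS{4k}{\gp}^{-1}\pmod{3\Ocal_K}$, and in the proof of Theorem~\ref{theorem:j0charctr}(a) it is shown that $\psi_E(\gp)\equiv\o^j\pmod3$ with $j$ odd; since $\bfmu_6\to(\Ocal_K/3\Ocal_K)^*$ is an isomorphism, this forces $\SS{4k}{\gp}=\o^{-j}$, so $\SS{4k}{\gp}^3=(\o^3)^{-j}=(-1)^{-j}=-1$. Cubing the sextic symbol gives $\LS{4k}{p}_\QQ=-1$, and as $\LS{4}{p}_\QQ=1$ we obtain $\LS{k}{p}_\QQ=-1$, i.e.\ $\SS{k}{\gp}^3=-1$. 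Next I would show $\SS{k}{\gp}^2\ne1$, i.e.\ that $k$ is not a cube modulo~$p$: good reduction forces $p\nmid k$, and if $-k$ were a cube in $\FF_p^*$ the curve would acquire a rational $2$-torsion point, making $\#\Etilde_p(\FF_p)=q$ even, contrary to $q\ge5$ prime; since $-1$ is a cube, $k$ is then also a non-cube. The two conditions $\SS{k}{\gp}^3=-1$ and $\SS{k}{\gp}^2\ne1$ force $\SS{k}{\gp}$ to have exact order~$6$, whence $\SS{k}{\gp}\in\{\o,\o^5\}$.

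For part~(b), cubing gives $(\SS{2}{\gp}\SS{2}{\gq})^3=\LS{2}{p}_\QQ\LS{2}{q}_\QQ\in\{\pm1\}$ at once, so it is enough to prove $(\SS{2}{\gp}\SS{2}{\gq})^2=1$; the two facts together then force $\SS{2}{\gp}\SS{2}{\gq}\in\{\pm1\}$, and a real sixth root of unity equals its own cube, yielding the stated equality. Thus the entire content is the triviality of the product of cubic characters $\SS{2}{\gp}^2\SS{2}{\gq}^2=\CS{2}{\gp}\CS{2}{\gq}$. I would compute this by cubic reciprocity: the prime~$2$ is inert in $\Ocal_K=\ZZ[\o]$, so $\Ocal_K/2\Ocal_K\cong\FF_4$ and $2$ is primary; taking primary generators $\pi_0,\sigma_0$ of $\gp,\gq$, reciprocity~\cite{IrelandRosen} gives $\CS{2}{\gp}\CS{2}{\gq}=\CS{\pi_0}{2}\CS{\sigma_0}{2}\equiv\pi_0\sigma_0\pmod2$.

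Finally I would identify $\pi_0\sigma_0$ modulo~$2$. Writing the Gr\"ossencharacter as $\psi_E(\gp)=-\SS{4k}{\gp}^{-1}\pi_0$ and $\psi_E(\gq)=-\SS{4k}{\gq}^{-1}\sigma_0$ and substituting into the relation $1-\psi_E(\gp)=\SS{4k}{\gp}\SS{4k}{\gq}\psi_E(\gq)$ of~\eqref{eqn:trgqtrgp}, the sextic symbols cancel and leave the exact identity $\pi_0\sigma_0=\psi_E(\gp)\bigl(1-\psi_E(\gp)\bigr)$. Since $1-\psi_E(\gp)$ generates~$\gq$ and $q$ is odd, we cannot have $\psi_E(\gp)\equiv1\pmod2$; hence $\psi_E(\gp)\bmod2$ is a primitive cube root of unity in $\FF_4^*$, for which one checks $\psi_E(\gp)\bigl(1-\psi_E(\gp)\bigr)\equiv1\pmod2$. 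Therefore $\SS{2}{\gp}^2\SS{2}{\gq}^2$ is the cube root of unity congruent to~$1$ modulo~$2$, namely~$1$. I expect the main obstacle to be precisely this step: invoking cubic reciprocity for the inert primary prime~$2$ and tracking the units so that the product of primary generators collapses to $\psi_E(\gp)\bigl(1-\psi_E(\gp)\bigr)$; once that identity is secured, the parity of~$q$ finishes the argument.
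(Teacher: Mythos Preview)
Your argument is correct and lands on exactly the same core computation as the paper for part~(b): reduce to showing $\CS{2}{\gp}\CS{2}{\gq}=1$, apply cubic reciprocity against the primary prime~$2$, and check that $\psi_E(\gp)\bigl(1-\psi_E(\gp)\bigr)\equiv1\pmod{2\Ocal_K}$. Your device of collapsing the units via the exact identity $\pi_0\sigma_0=\psi_E(\gp)\bigl(1-\psi_E(\gp)\bigr)$ (deduced from~\eqref{eqn:trgqtrgp} and the Gr\"ossencharacter formula) is a slight streamlining of the paper, which instead picks a single $\z\in\bfmu_6$ making $\z\psi_E(\gp)$ primary, observes from Theorem~\ref{theorem:j0charctr}(a) that $\z^{-1}\bigl(1-\psi_E(\gp)\bigr)$ is then automatically primary, and lets the $\z$'s cancel inside the cubic symbol.

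The one genuine divergence is in part~(a), in how you obtain $\SS{k}{\gp}^3=-1$. The paper argues directly from torsion: if~$k$ were a square modulo~$p$, then $(0,\sqrt{k})$ would be a nontrivial $3$-torsion point on~$\Etilde_p$, forcing $3\mid q$. You instead go through the Gr\"ossencharacter, using~\eqref{eqn:psiEpqSS} and the mod-$3$ analysis of~$\psi_E(\gp)$ from the proof of Theorem~\ref{theorem:j0charctr}(a) to conclude $\SS{4k}{\gp}^3=-1$. Both are valid; the paper's route is more self-contained and elementary, while yours shows that the quadratic constraint is already encoded in the mod-$3$ behavior of the Gr\"ossencharacter.
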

\begin{proof}
(a)\enspace
If $k$ is a square modulo~$\gp$, then~$\Etilde_\gp(\FF_\gp)$ has
a nontrivial $3$-torsion point, so~$\#\Etilde_\gp(\FF_\gp)$ cannot be
prime. Similarly, if~$k$ is a cube modulo~$\gp$,
then~$\Etilde_\gp(\FF_\gp)$ has a nontrivial $2$-torsion point,
so again $\#\Etilde_\gp(\FF_\gp)$ cannot prime. Hence
\[
  \SS{k}{\gp}^3 = \LS{k}{\gp}_2 \ne 1
  \qquad\text{and}\qquad
  \SS{k}{\gp}^2 = \LS{k}{\gp}_3 \ne 1.
\]
This means that $\SS{k}{\gp}$ cannot equal~$1$,~$\o^2$,~$\o^3$, or~$\o^4$,
so it must be either~$\o$ or~$\o^5$.
\par\noindent(b)\enspace
We first note that for any~$\a,\b\in\Ocal_K$ with $\gcd(6,\b)=1$, we
have
\begin{equation}
  \label{eqn:SSabLSCS}
  \SS{\a}{\b}^{-1} = \SS{\a}{\b}^{5}
  = \SS{\a}{\b}^{3} \SS{\a}{\b}^{2}
  = \LS{\a}{\b}_2 \CS{\a}{\b}.
\end{equation}
If, in addition, $\a\in\ZZ$, then~\cite[Chapter~18, Section~7,
Lemma~2]{IrelandRosen} says that $\LS{\a}{\b}_2 =
\LS{\a}{\Norm_{K/\QQ}(\b)}_\QQ$.

In order to prove~(b), we use cubic reciprocity~\cite[Chapter~9,
  Section~3]{IrelandRosen}. We recall that an element $\a\in\Ocal_K$
is said to be \emph{primary} if
$\a\equiv2\pmod{3\Ocal_K}$. Since~$\psi_E(\gp)$ is relatively prime
to~$3$, there is a (unique) sixth root of unity~$\z\in\bfmu_6$ such
that $\z\psi_E(\gp)$ is primary. It follows from 
Theorem~\ref{theorem:j0charctr}(a) that
$\z^{-1}\bigl(1-\psi_E(\gp)\bigr)$ is also primary, and of course, the
number~$2$ is primary. Hence cubic reciprocity yields
\begin{align}
  \label{eqn:CS2pCS2q}
  \CS{2}{\gp}\CS{2}{\gq}
  &= \CS{2}{\psi_E(\gp)}\CS{2}{1-\psi_E(\gp)} \notag\\
  &= \CS{2}{\z\psi_E(\gp)}\CS{2}{\z^{-1}(1-\psi_E(\gp)) } \notag\\
  &= \CS{\z\psi_E(\gp)}{2}\CS{\z^{-1}(1-\psi_E(\gp)) }{2} \notag\\
  &= \CS{\psi_E(\gp)(1-\psi_E(\gp))}{2}.
\end{align}
The primes $\psi_E(\gp)$ and $1-\psi_E(\gp)$ are relatively prime
to~$2$, so~$\psi_E(\gp)$ is congruent to either~$\o$ or~$1+\o$
modulo~$2$. (Note that $\Ocal_K/2\Ocal_K=\{0,1,\o,1+\o\}$.)
Hence
\[
  \psi_E(\gp)\bigl(1-\psi_E(\gp)\bigr)
  \equiv \o(1+\o) \equiv 1 \pmod{2\Ocal_K}.
\]
Substituting into~\eqref{eqn:CS2pCS2q} shows that
$\CS{2}{\gp}\CS{2}{\gq}=1$.  Using~\eqref{eqn:SSabLSCS} and its
accompanying remark, we find that
\[
  \SS{2}{\gp}^{-1}\SS{2}{\gq}^{-1}
  = \LS{2}{\gp}_2\LS{2}{\gq}_2 \CS{2}{\gp}\CS{2}{\gq}
  = \LS{2}{p}_\QQ\LS{2}{q}_\QQ,
\]
which completes the proof of~(b). 
\end{proof}

\begin{corollary}
\label{corollary:j0explicit}
Let~$E/\QQ$,~$p$, and~$q$ be as in the statement of
Theorem~$\ref{theorem:j0charctr}$. 
\begin{parts}
\Part{(a)}
There exists an integer~$A$ satisfying
\begin{equation}
  \label{eqn:A22pq2p2q}
  A^2 = \frac{2pq+2p+2q-p^2-q^2-1}{3}.
\end{equation}
\Part{(b)}
The trace
$a_q(E)=q+1-\#\Etilde_q(\FF_q)$ equals one of the following six values\textup:
\begin{equation}
  \label{eqn:p2q2pq13A2}
  \pm(q+1-p),\quad
  \frac{\pm(q+1-p)\pm3A}{2}.
\end{equation}
\end{parts}
\end{corollary}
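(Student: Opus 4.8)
The plan is to prove Corollary~\ref{corollary:j0explicit} as a direct consequence of Theorem~\ref{theorem:j0charctr}(d), which tells us that~$a_q(E)$ is determined by the value of the product of sextic symbols~$\SS{4k}{\gp}\SS{4k}{\gq}$. The key observation is that this product is a sixth root of unity, so it lies in~$\bfmu_6=\{\pm1,\pm\o,\pm\o^2\}$. When the product equals~$\e\in\{\pm1\}$, part~(d) of the theorem gives us~$a_q(E)=\e(q+1-p)$ directly, accounting for two of the six values. The work lies in handling the remaining four cases, where the product is a primitive cube root or sixth root of unity; here I would need to compute~$\Trace_{K/\QQ}\bigl(\z(1-\psi_E(\gp))\bigr)$ explicitly for~$\z=\pm\o,\pm\o^2$.

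First I would set~$\b=1-\psi_E(\gp)$, so that~$\Norm_{K/\QQ}(\b)=q$ and~$\Trace_{K/\QQ}(\b)=q+1-p$ (the latter computed as in the proof of part~(d)). Writing~$\b=u+v\sqrt{-3}$ for suitable half-integers~$u,v$, we have~$\Trace(\b)=2u=q+1-p$ and~$\Norm(\b)=u^2+3v^2=q$. Solving these gives~$u=(q+1-p)/2$ and~$3v^2=q-u^2$, so that~$(2v)^2\cdot 3 = 4q-(q+1-p)^2$, which is exactly the quantity~$3A^2$ appearing in~\eqref{eqn:A22pq2p2q} after setting~$A=2v$ and simplifying~$4q-(q+1-p)^2 = 2pq+2p+2q-p^2-q^2-1$. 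This establishes part~(a): the integer~$A$ exists precisely because~$v$ is (half of) an element of~$\ZZ$ coming from~$\b\in\Ocal_K=\ZZ[\o]$, and the formula for~$\Norm(\b)=q$ forces the displayed identity.

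For part~(b), I would compute~$\Trace_{K/\QQ}(\z\b)$ for each of the six values~$\z\in\bfmu_6$, using~$\b=\tfrac12(q+1-p)+\tfrac12 A\sqrt{-3}$. For~$\z=\pm1$ this recovers~$\pm(q+1-p)$. For~$\z=\pm\o=\pm\tfrac12(1+\sqrt{-3})$ and~$\z=\pm\o^2=\pm\tfrac12(-1+\sqrt{-3})$, multiplying out and taking the trace~$\Trace(\z\b)=\z\b+\overline{\z\b}$ yields~$\Trace(\z\b)=\pm\tfrac12\bigl((q+1-p)-3A\bigr)$ or~$\pm\tfrac12\bigl((q+1-p)+3A\bigr)$, which is precisely the set~$\tfrac{\pm(q+1-p)\pm3A}{2}$ in~\eqref{eqn:p2q2pq13A2}. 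Since by Theorem~\ref{theorem:j0charctr}(c) we have~$\psi_E(\gq)=\SS{4k}{\gp}^{-1}\SS{4k}{\gq}^{-1}\b$, and~$a_q(E)=\Trace_{K/\QQ}(\psi_E(\gq))$, the trace~$a_q(E)$ must equal~$\Trace(\z\b)$ for~$\z=\SS{4k}{\gp}^{-1}\SS{4k}{\gq}^{-1}\in\bfmu_6$, so~$a_q(E)$ is one of these six explicit values.

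The main obstacle, though it is a mild one, is the bookkeeping in the trace computation: one must track the half-integer coordinates of~$\b$ and the action of the six units carefully to verify that the six products collapse to exactly the four values~$\tfrac{\pm(q+1-p)\pm3A}{2}$ together with~$\pm(q+1-p)$, rather than producing spurious duplicates or missing cases. A secondary point worth stating is that~$A$ is genuinely an integer (not merely a half-integer): this follows because~$q$ is odd and~$q+1-p$ is the trace of an algebraic integer, forcing~$A=2v$ to be an ordinary integer. I expect no essential difficulty beyond this, since everything reduces to the explicit arithmetic of~$\Ocal_K=\ZZ[\o]$ and the fact established in part~(d) that~$\SS{4k}{\gp}\SS{4k}{\gq}$ ranges over~$\bfmu_6$.
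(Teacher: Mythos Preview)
Your proposal is correct and follows essentially the same route as the paper's proof. The only cosmetic difference is that for part~(a) you work with $\b=1-\psi_E(\gp)$ (norm~$q$, trace~$q+1-p$) whereas the paper works with~$\psi_E(\gp)$ itself (norm~$p$, trace~$p+1-q$); both yield the same expression for~$3A^2$, and part~(b) is then computed identically by running~$\z$ through~$\bfmu_6$ in~$\Trace_{K/\QQ}\bigl(\z(1-\psi_E(\gp))\bigr)$ via Theorem~\ref{theorem:j0charctr}(c).
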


\begin{remark}
The six possible values of~$\#\Etilde_q(\FF_q)$ described in
Corollary~\ref{corollary:j0explicit}(b) are~$\#\Etilde_q^{(d)}(\FF_q)$ for
the sextic twists of~$\Etilde_q$ corresponding to the elements of
$H^1\bigl(\Gal(\bar\FF_q/\FF_q),\Aut(\Etilde_q)\bigr) \cong
H^1\bigl(\Gal(\bar\FF_q/\FF_q),\boldsymbol\mu_6\bigr) \cong
\FF_q^*/(\FF_q^*)^6$.
\end{remark}

\begin{remark}
Using Corollary~\ref{corollary:j0explicit} and a case-by-case analysis,
we prove in Appendix~\ref{appendix:j0aliqtriples} that $j=0$ elliptic
curves have no aliquot cycles of length three.
\end{remark}

\begin{proof}
(a)\enspace
We know that $\Trace\bigl(\psi_E(\gp)\bigr)=a_p(E)$, so
writing~$\psi_E(\gp)$ as an element of~$\Ocal_K=\ZZ[\o]$, it has the form
\begin{equation}
  \label{eqn:psigptpEA32}
  \psi_E(\gp) = \frac{a_p(E)+A\sqrt{-3}}{2}
  \quad\text{for some $A\in\ZZ$.}
\end{equation}
Since we also know that~$\Norm_{K/\QQ}\bigl(\psi_E(\gp)\bigr)=p$, we find that
\begin{equation}
  \label{eqn:tpEA4}
  \frac{a_p(E)^2 + 3A^2}{4} = p.
\end{equation}
Finally, the assumption that $\#\Etilde_p(\FF_p)=q$ is equivalent
to $a_p(E)=p+1-q$. Substituting this value into~\eqref{eqn:tpEA4},
a little bit of algebra shows that~$A$ has the form specified
by~\eqref{eqn:A22pq2p2q}.
\par\noindent(b)\enspace
Applying~\eqref{eqn:trgqtrgp}
from Theorem~\ref{theorem:j0charctr}, we find that
\[
  \Trace_{K/\QQ}\bigl(\psi_E(\gq)\bigr)
  = \Trace_{K/\QQ}\bigl( \z(1-\psi_E(\gp))\bigr)
\]
for some~$\z\in\bfmu_6$.  Using the value of~$\psi_E(\gp)$
from~\eqref{eqn:psigptpEA32} with the substitution $a_p(E)=p+1-q$
yields
\[
  \Trace_{K/\QQ}\bigl(\psi_E(\gq)\bigr)
  = \Trace_{K/\QQ}
       \left( \z \left( \frac{q+1-p-A\sqrt{-3}}{2} \right) \right).
\]
Substituting in each of the six possible values~$\z\in\bfmu_6$ and 
taking the trace yields the six values listed in~\eqref{eqn:p2q2pq13A2}.
\end{proof}

\begin{definition}
Fix a non
$
  E:y^2=x^3+k.
$
We let~$\Ncal_k$ denote the set
\[
  \Ncal_k = \left\{
    \begin{tabular}{@{}l@{}}
       primes $p\ge5$ of good  reduction for~$E$\\
       such that $q=\#\Etilde_p(\FF_p)$ is also\\   
        a prime of good reduction for $E$\\
    \end{tabular}
  \right\}.
\]
(This differs slightly from our earlier notation in that we are now
excluding a few primes, but this does not affect our asymptotic formulas.)
We define a subset of~$\Ncal_k$ by
\[
  \Ncal_k^{[1]} = \bigl\{p\in\Ncal_k : a_q(E)=\pm(q+1-p) \bigr\},
\]
and we say that the primes in~$\Ncal_k^{[1]}$ are of Type~1 for~$E$.
We write~$\Ncal_k(X)$ for the number of primes in~$\Ncal_k$ that are
less than~$X$, and similarly for~$\Ncal_k^{[1]}(X)$.
\end{definition}

Only Type~1 primes can be amicable, and based on experiments, we
expect that about half of the Type~1 primes will be
part of an amicable pair. Let
\[
  \Qcal_k(X) = \#\bigl\{p < X :
   \text{$p<q$ and $(p,q)$ is an amicable pair for $E$}\bigr\},
\]
i.e., $\Qcal_k(X)$ is the number of normalized amicable pairs~$(p,q)$
on~$E$ with~$p<X$. Then we have the following conjecture, where the
conjectured limit is~$\frac14$, rather than~$\frac12$,
because~$\Qcal_k(X)$ counts amicable pairs~$(p,q)$ with~$p<q$,
while $\Ncal_k^{[1]}(X)$ counts both~$(p,q)$ and~$(q,p)$.

\begin{conjecture}
\label{conj:QoverNeq12}
With notation as above, the proportion of Type~$1$ primes that 
are part of a normalized amicable pair is given by
\[
  \lim_{X\to\infty} \frac{\Qcal_k(X)}{\Ncal_k^{[1]}(X)} = \frac{1}{4}.
\]
\end{conjecture}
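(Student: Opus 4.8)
The plan is to give a heuristic reduction, parallel to the justification of Conjecture~\ref{conj:CMamicable} in the $j\ne0$ case, that turns the limit into a product of two factors of $\frac12$. The starting point is Theorem~\ref{theorem:j0charctr}(d). For any Type~1 prime $p\in\Ncal_k^{[1]}$ the product $\e\defeq\SS{4k}{\gp}\SS{4k}{\gq}$ lies in $\{\pm1\}$---that is precisely what being Type~1 means---and part~(d) matches its two values with the two Type~1 outcomes: $\e=+1$ gives $a_q(E)=q+1-p$, i.e.\ $\#\Etilde_q(\FF_q)=p$, while $\e=-1$ gives $a_q(E)=-(q+1-p)$, i.e.\ $\#\Etilde_q(\FF_q)=2q+2-p$. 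Thus a Type~1 prime is part of an amicable pair if and only if its sign is $+1$, and I would split $\Ncal_k^{[1]}$ accordingly into
\[
  \Ncal_k^{[1,+]}=\bigl\{p\in\Ncal_k^{[1]}:\#\Etilde_q(\FF_q)=p\bigr\},
  \qquad
  \Ncal_k^{[1,-]}=\bigl\{p\in\Ncal_k^{[1]}:\#\Etilde_q(\FF_q)=2q+2-p\bigr\},
\]
so that $\Ncal_k^{[1,+]}$ is exactly the set of primes belonging to some amicable pair.

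The first factor of $\frac12$ comes from the (conjectural) equidistribution hypothesis that, as $p$ ranges over $\Ncal_k^{[1]}$, the two sign values are equally likely:
\[
  \Ncal_k^{[1,+]}(X)\sim\Ncal_k^{[1,-]}(X)\sim\tfrac12\,\Ncal_k^{[1]}(X).
\]
This is the $j=0$ analogue of the $\frac12$ used in Conjecture~\ref{conj:CMamicable}, and it is the genuinely conjectural heart of the argument. The point of conditioning on Type~1 is that, within Type~1, the symbol $\SS{4k}{\gp}\SS{4k}{\gq}$ takes only the two values $\pm1$ with no structural reason (and experimentally no tendency) to favor one over the other; this is why the answer is the clean $\frac14$, in contrast to the ratio $\Qcal_k(X)/\Ncal_k(X)$ over \emph{all} of $\Ncal_k$, which depends on $k\bmod36$.

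The second factor of $\frac12$ is pure bookkeeping. If $p\in\Ncal_k^{[1,+]}$ and $q=\#\Etilde_p(\FF_p)$, then $(p,q)$ is an amicable pair and \emph{both} $p$ and $q$ lie in $\Ncal_k^{[1,+]}$; conversely every amicable pair arises this way. Hence $\Ncal_k^{[1,+]}(X)$ counts the endpoints of amicable pairs, while $\Qcal_k(X)$ counts each normalized pair (with $p<q$) once. Since $q=\#\Etilde_p(\FF_p)=p+1-a_p(E)=p+O(\sqrt p)$, the two endpoints differ by $O(\sqrt p)$, so the ``boundary'' pairs with $p<X\le q$ number $o\bigl(\Qcal_k(X)\bigr)$, and each interior pair contributes $2$ to $\Ncal_k^{[1,+]}(X)$ and $1$ to $\Qcal_k(X)$. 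Therefore $\Ncal_k^{[1,+]}(X)\sim 2\,\Qcal_k(X)$, and combining this with the equidistribution step yields
\[
  \Qcal_k(X)\sim\tfrac12\,\Ncal_k^{[1,+]}(X)\sim\tfrac14\,\Ncal_k^{[1]}(X),
\]
which is the assertion.

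I expect the equidistribution hypothesis to be the only real obstacle: the bookkeeping and the identification of the sign $+1$ with amicability are rigorous consequences of Theorem~\ref{theorem:j0charctr}. A fully rigorous statement would require controlling the joint distribution of the sextic symbols $\SS{4k}{\gp}$ and $\SS{4k}{\gq}$---equivalently of $\psi_E(\gp)\bmod k$---as $p$ runs over $\Ncal_k^{[1]}$, which is beyond present methods. The support for it is the explicit point count on the genus-four curves $C^{(\g,\d)}$ carried out in the remainder of this section, together with the numerical evidence of Section~\ref{section:amicableCMexper}.
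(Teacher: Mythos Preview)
Your proposal is correct and matches the paper's own (very brief) justification: the paper simply says ``based on experiments, we expect that about half of the Type~1 primes will be part of an amicable pair,'' and then explains the extra factor of~$\frac12$ by normalization, exactly as you do. Your version is more explicit in one useful respect: you identify, via Theorem~\ref{theorem:j0charctr}(d), the sign $\e=\SS{4k}{\gp}\SS{4k}{\gq}\in\{\pm1\}$ as the precise invariant distinguishing the amicable Type~1 primes from the non-amicable ones, which gives a cleaner formulation of the equidistribution hypothesis than the paper's bare appeal to experiment.

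One small correction to your final paragraph: the point counts on the genus-four curves~$C_6^{(\g,\d)}$ are used to compute $\#M_k^{[1]}/\#M_k$, i.e.\ the conjectural density of Type~1 primes within~$\Ncal_k$ (Conjecture~\ref{conjecture:N1kN1M1kM1}), not the equidistribution of the sign~$\e$ within~$\Ncal_k^{[1]}$. The evidence for the latter is purely the numerical data in Table~\ref{tableCM9} (the column~$\Qcal/\Ncal^{[1]}$), not the curve calculations.
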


Thus in order to understand the distribution of amicable pairs on~$E$,
we need to study the density of the Type~1 primes in~$\Ncal_k$.

\begin{remark}
According to Corollary~\ref{corollary:j0explicit}, there are six
possible values for~$a_q(E)$, two of which give Type~1 primes, so one
might expect~$\Ncal_k^{[1]}$ to have density~$\frac13$
inside~$\Ncal_k$. This turns out not to be the case.  At the extreme
end, Corollary~\ref{cor:2d3j0} says that
$\Ncal_{2d^3}^{[1]}=\Ncal_{2d^3}$ for any nonzero $d\in\ZZ$.  The rest
of this section is devoted to developing tools for calculating a
conjectural value for $\lim_{X\to\infty}\Ncal_k^{[1]}(X)/\Ncal_k(X)$.
This value depends on~$k$ in quite a complicated way;
see Conjecture~\ref{conjecture:N1kN1M1kM1}.  For precise formulas when~$k$ is
prime, see Conjecture~\ref{conjecture:N1density}, which says that the
limit should equal~$\frac13+R(k)$, where~$R(k)$ is a rational function
of~$k$ that depends on~$k$ modulo~$36$.
\end{remark}

\begin{definition}
We set the notation
\[
  n\pequiv a\pmod{m}
  \quad\Longleftrightarrow\quad
  p\equiv a\pmod{m}~\text{for every prime $p\mid n$.}
\]
Further, for any ideal~$\gK\subset\Ocal_K$ we define
\[
  \OK{\gK} = 
  \left\{\l\in\frac{\Ocal_K}{\gK} : \gcd\bigl(\l(1-\l),\gK\bigr)=1 \right\}.
\]
If~$\gK=k\Ocal_K$ is principal, we write simply~$\OK{k}$.
\par
Now let $k\in\ZZ$ satisfy $\gcd(6,k)=1$. We define a set~$M_k$ that
depends on~$k$ modulo~$4$ and on the primes dividing~$k$ modulo~$9$.
\begin{align*}
  \multispan{2}{
  \framebox{(a) $k\equiv1\pmod4$ and $k\pequiv\pm1\pmod9$}
   \hfill} \\
  \hspace*{4em}
  M_k &= \left\{ \l\in\OK{k} : \LS{\l}{k}_2=-1 \text{ and } \CS{\l}{k}\ne1
  \right\}. 
  \hspace*{4em}\\
  %%%%%%%%%%%%%%%%%%%%%%%%%%%%%%
  \multispan{2}{
  \framebox{(b) $k\equiv1\pmod4$ and $k\not\pequiv\pm1\pmod9$}
   \hfill} \\
  M_k &= \left\{ \l\in\OK{k} : \LS{\l}{k}_2=-1
  \right\}. \\
  %%%%%%%%%%%%%%%%%%%%%%%%%%%%%%
  \multispan{2}{
  \framebox{(c) $k\equiv3\pmod4$ and $k\pequiv\pm1\pmod9$}
   \hfill} \\
  M_k &= \left\{ \l\in\OK{k} : \CS{\l}{k}\ne1
  \right\}. \\
  %%%%%%%%%%%%%%%%%%%%%%%%%%%%%%
  \multispan{2}{
  \framebox{(d) $k\equiv3\pmod4$ and $k\not\pequiv\pm1\pmod9$}
   \hfill} \\
  M_k &= \OK{k}. \\
  %%%%%%%%%%%%%%%%%%%%%%%%%%%%%%
  \noalign{\noindent Further, for every $k$ we define a subset of $M_k$ by}
  M^{[1]}_k &= 
  \left\{\l\in M_k : \CS{\l(1-\l)}{k}=1\right\}.
\end{align*}
\end{definition}

\begin{remark}
It is easy to check that $k\pequiv\pm1\pmod9$ if and only if every
cube root of unity in~$\Ocal_K/k\Ocal_K$ is itself a cube.  For
example, suppose that $k\in\ZZ$ is prime.  If $k\equiv-1\pmod9$, then
$\Ocal_K/k\Ocal_K\cong\FF_{k^2}$ is a finite field with~$k^2$
elements, and $\bfmu_9\subset\FF_{k^2}$.  Similarly, if
$k\equiv1\pmod9$, then $\Ocal_K/k\Ocal_K\cong\FF_k\times\FF_k$, and
$\bfmu_9\subset\FF_k$. Thus in both cases, every cube root of unity
in~$\Ocal_K/k\Ocal_K$ is itself a cube.
\end{remark}

\begin{conjecture}
\label{conjecture:N1kN1M1kM1}
Let $k\in\ZZ$ be an integer satisfying $\gcd(6,k)=1$. Then
\begin{equation}
  \label{eqn:limNk1NkMk1Mk}
  \lim_{X\to\infty} \frac{\Ncal_k^{[1]}(X)}{\Ncal_k(X)}
  = \frac{\#\Mcal^{[1]}_k}{\#\Mcal_k}.
\end{equation}
\end{conjecture}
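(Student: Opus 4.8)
The plan is to turn both counting functions into counts of residue classes and then reduce the asserted limit to an equidistribution hypothesis for the Gr\"ossencharacter. For each prime $p\in\Ncal_k$, Lemma~\ref{lemma:psplits} gives a splitting $p\Ocal_K=\gp\bar\gp$, and I attach to $p$ the residue $\lambda_p=\psi_E(\gp)\bmod k\Ocal_K\in\Ocal_K/k\Ocal_K$. The goal is to prove two \emph{exact} (non-probabilistic) statements: first, that $\lambda_p\in M_k$ for every $p\in\Ncal_k$; and second, that $p$ is of Type~$1$ if and only if $\lambda_p\in M^{[1]}_k$. Granting these, the conjecture follows from the hypothesis that, as $p$ ranges over $\Ncal_k$, the residues $\lambda_p$ become equidistributed in $M_k$: equidistribution makes the proportion of $p\le X$ with $\lambda_p\in M^{[1]}_k$ converge to $\#M^{[1]}_k/\#M_k$, which is exactly the right-hand side of~\eqref{eqn:limNk1NkMk1Mk}.

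First I would verify that $\lambda_p$ lands in $\OK{k}$. Good reduction forces $p\nmid 6k$ and $q\nmid 6k$, so $\Norm_{K/\QQ}(\lambda_p)=p$ and $\Norm_{K/\QQ}(1-\lambda_p)=q$ are both prime to $k$ (here I use Theorem~\ref{theorem:j0charctr}(b), which identifies $1-\psi_E(\gp)$ as a generator of~$\gq$); hence $\gcd\bigl(\lambda_p(1-\lambda_p),k\bigr)=1$ and $\lambda_p\in\OK{k}$. The remaining defining conditions of $M_k$ come from Proposition~\ref{proposition:kpworw5}(a): since $q=\#\Etilde_\gp(\FF_\gp)$ is prime, $k$ is neither a square nor a cube modulo $\gp$, which says precisely $\LS{k}{\gp}_2=-1$ and $\CS{k}{\gp}\ne1$. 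I would then transport each of these symbols, which live at the prime $\gp$ downstairs, to a condition on $\SS{\lambda_p}{k}$ living at $k$ downstairs, using the quadratic and cubic reciprocity laws (together with the lemma $\LS{a}{\gp}_2=\LS{a}{p}_\QQ$ for $a\in\ZZ$ already used in the proof of Proposition~\ref{proposition:kpworw5}). The reciprocity corrections are fixed constants depending only on $k$: the quadratic correction depends on $k\bmod4$ and the cubic correction on $k\bmod9$ (equivalently on whether every cube root of unity mod $k$ is a cube). These two dichotomies are exactly the four boxed cases defining $M_k$: the quadratic restriction $\LS{\lambda}{k}_2=-1$ survives as a genuine condition precisely when $k\equiv1\pmod4$ and is otherwise automatically satisfied by every $p\in\Ncal_k$, while the cubic restriction $\CS{\lambda}{k}\ne1$ survives precisely when $k\pequiv\pm1\pmod9$.

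For the Type~$1$ criterion I would use Theorem~\ref{theorem:j0charctr}(d) in the form~\eqref{eqn:pqrecipj0alt}: $p\in\Ncal^{[1]}_k$ iff $\SS{k}{\gp}\SS{k}{\gq}=\pm1$, and an element of $\bfmu_6$ equals $\pm1$ iff its square is $1$, i.e.\ iff the cubic part $\CS{k}{\gp}\CS{k}{\gq}$ is trivial. Applying cubic reciprocity to flip each factor, and using $\psi_E(\gq)\equiv u^{-1}(1-\psi_E(\gp))$ with $u\in\bfmu_6$ from Theorem~\ref{theorem:j0charctr}(b),(c), the product $\CS{k}{\gp}\CS{k}{\gq}$ becomes $\CS{\lambda_p(1-\lambda_p)}{k}$. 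Thus $p\in\Ncal^{[1]}_k$ iff $\CS{\lambda_p(1-\lambda_p)}{k}=1$, which is exactly the defining condition of $M^{[1]}_k$, giving the second exact statement.

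The hard part is the equidistribution input, which is where the argument ceases to be a theorem and becomes genuinely conjectural. Even if one could prove that $\psi_E(\gp)\bmod k$ is uniformly distributed in $\Ocal_K/k\Ocal_K$ over all split $p$ (a Chebotarev/Hecke-type statement for the Gr\"ossencharacter), what is actually needed is that the \emph{conditional} distribution, among the allowable residues cut out by the primality of $q$, is uniform on $M_k$; this conditioning is the delicate point flagged in the introduction, and I expect it to be the main obstacle. Two bookkeeping subtleties must also be handled inside the exact part: the choice of $\gp$ rather than $\bar\gp$ (conjugation sends $\lambda_p$ to $\bar\lambda_p$, so one must either fix a consistent prime or check that the conditions defining $M_k$ and $M^{[1]}_k$ are stable under $\lambda\mapsto\bar\lambda$), and the verification that the unit-valued correction factors in cubic reciprocity genuinely cancel in the products $\CS{k}{\gp}\CS{k}{\gq}$ and in the passage from $\SS{k}{\gp}$ to $\SS{\lambda_p}{k}$, leaving the clean conditions recorded above.
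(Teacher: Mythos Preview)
Your proposal follows the same route as the paper's justification: map $p\in\Ncal_k$ to $\l_p=\psi_E(\gp)\bmod k$, use cubic reciprocity together with Theorem~\ref{theorem:j0charctr}(a) to show that $p\in\Ncal_k^{[1]}$ if and only if $\CS{\l_p(1-\l_p)}{k}=1$, and reduce the limit to an equidistribution hypothesis. The Type~$1$ criterion, including the cancellation of the unit corrections in $\CS{k}{\gp}\CS{k}{\gq}$ via $\psi_E(\gp)\bigl(1-\psi_E(\gp)\bigr)\equiv1\pmod{3}$, matches the paper and is exact.

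There is one genuine error in your explanation of the set~$M_k$. You assert that ``the reciprocity corrections are fixed constants depending only on~$k$,'' and that when $k\equiv3\pmod4$ the quadratic restriction is ``automatically satisfied by every $p\in\Ncal_k$.'' Neither is true. Lemma~\ref{lemma:srinQo}(a) gives $\LS{k}{\gp}_2=(-1)^{\frac{p-1}{2}\cdot\frac{k-1}{2}}\LS{\l_p}{k}_2$, so the exact constraint coming from Proposition~\ref{proposition:kpworw5}(a) is $\LS{\l_p}{k}_2=-(-1)^{\frac{p-1}{2}\cdot\frac{k-1}{2}}$; when $k\equiv3\pmod4$ this sign flips with $p\bmod4$, so roughly half the primes in~$\Ncal_k$ have $\LS{\l_p}{k}_2=+1$. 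Likewise the cubic correction in Lemma~\ref{lemma:srinQo}(b) is $\CS{\z}{k}$ with $\z\in\bfmu_3$ determined by $\psi_E(\gp)\bmod3$, hence varying with~$p$; only when $k\pequiv\pm1\pmod9$ are all cube roots of unity themselves cubes mod~$k$, forcing $\CS{\z}{k}=1$ uniformly. The paper closes this gap by inserting two \emph{additional} heuristic assumptions---that $p\bmod4$ and that~$\z$ are each equidistributed as~$p$ ranges over~$\Ncal_k$---and these are precisely what justify dropping the quadratic condition from~$M_k$ when $k\equiv3\pmod4$ and the cubic condition when $k\not\pequiv\pm1\pmod9$. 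Your claim ``$\l_p\in M_k$ for every~$p$'' happens to be literally true, since~$M_k$ is chosen large enough, but your single equidistribution hypothesis on~$M_k$ tacitly bundles these auxiliary assumptions; without them there is no reason the~$\l_p$ should fill out all of~$M_k$ rather than a proper subset determined by $p\bmod4$ and $\psi_E(\gp)\bmod3$.
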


\begin{remark}
For small values of~$k$ it is not difficult to compute the
sets~$\Mcal_k$ and~$\Mcal_k^{[1]}$, thereby obtaining an explicit
(conjectural) value for the limit~\eqref{eqn:limNk1NkMk1Mk}.
Table~\ref{table:setsOKkMkMk1} gives some examples corresponding to
the four cases~(a)--(d) used to define~$\Mcal_k$, further divided
according to the value of~$k$ modulo~$3$.  (The notation~$(x.n)$ after
each value of~$k$ indicates the case $x=\text{(a),\dots,(d)}$ and the
congruence class~$k\equiv n\pmod3$.) 
\end{remark}

\begin{table}
\small
\[
  \renewcommand{\arraystretch}{1.25}
  \begin{array}{|c|c|c|c|c|} \hline
    k & \#\OK{k} & \#M_k & \#M_k^{[1]} & \#M_k^{[1]}/\#M_k \\ \hline
    37 \;\text{(a.1)} & 1225 & 408 & 144 & \frac{6}{17} = 0.3529 \\ \hline
    17 \;\text{(a.2)} & 287 & 96 & 36 & \frac{3}{8} = 0.3750 \\ \hline
    13 \;\text{(b.1)} & 121 & 60 & 20 & \frac{1}{3} = 0.3333 \\ \hline
    \phantom{0}5 \;\text{(b.2)} & 23 & 12 & 4 & \frac{1}{3} = 0.3333 \\ \hline
    19 \;\text{(c.1)} & 289 & 192 & 72 & \frac{3}{8} = 0.3750 \\ \hline
    71 \;\text{(c.2)} & 5039 & 3360 & 1152 & \frac{12}{35} = 0.3429 \\ \hline
    \phantom{0}7 \;\text{(d.1)} & 25 & 25 & 13 & \frac{13}{25} = 0.5200
                 \\ \hline    
    11 \;\text{(d.2)} & 119 & 119 & 47 & \frac{47}{119} = 0.3950 \\ \hline
  \end{array}
\]
\caption{The sets $\OK{k}$, $M_k$ and $M_k^{[1]}$}
\label{table:setsOKkMkMk1}
\end{table}

Our justification for Conjecture~\ref{conjecture:N1kN1M1kM1} uses the
following weak form of quadratic and cubic reciprocity for the
field~$\QQ(\o)$.

\begin{lemma}
\label{lemma:srinQo}
Let $k\in\ZZ$ satisfy~$\gcd(k,6)=1$, and let $\l\in\ZZ[\o]$ satisfy
$\gcd(6k,\l)=1$. 
\begin{parts}
\Part{(a)}
\textup{(Quadratic Reciprocity in $\QQ(\o)$)}
\[
  \LS{k}{\l}_2 = (-1)^{\frac{\Norm(\l)-1}{2}\cdot\frac{k-1}{2}}\LS{\l}{k}_2.
\]
\Part{(b)}
\textup{(Cubic Reciprocity in $\QQ(\o)$)}
Let~$\z\in\bfmu_3$  be the unique cube root of unity such that
\[
  \z\l\equiv\pm1\pmod{3\Ocal_K}.
\]
Then
\[
  \CS{k}{\l} = \CS{\z}{k}\CS{\l}{k}.
\]
\end{parts}
\end{lemma}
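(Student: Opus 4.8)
The plan is to prove both statements by reducing them to standard reciprocity laws: rational quadratic reciprocity for part~(a), and cubic reciprocity for primary primes in $\Ocal_K$ (Ireland--Rosen, Chapter~9) for part~(b). The organizing principle in each case is that a symbol one of whose entries is a rational integer can be rewritten in terms of norms, after which either the rational law or a prime-by-prime application of cubic reciprocity finishes the job. Throughout, $\gcd(6k,\l)=1$ guarantees coprimality and that all relevant norms are prime to~$3$.

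For part~(a) I would first record two ``norm-reduction'' identities. The first is exactly the lemma already invoked in the paper, namely $\LS{k}{\l}_2=\LS{k}{\Norm(\l)}_\QQ$, valid since $k\in\ZZ$. The second, $\LS{\l}{k}_2=\LS{\Norm(\l)}{k}_\QQ$, I would prove by multiplicativity, reducing to a single rational prime $q\mid k$. If $q$ splits as $\pi\bar\pi$, then, using that conjugation sends $\LS{\l}{\bar\pi}_2$ to $\LS{\bar\l}{\pi}_2$ and then applying the cited lemma, $\LS{\l}{q}_2=\LS{\l}{\pi}_2\LS{\l}{\bar\pi}_2=\LS{\l}{\pi}_2\LS{\bar\l}{\pi}_2=\LS{\Norm(\l)}{\pi}_2=\LS{\Norm(\l)}{q}_\QQ$. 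If $q$ is inert, then $\LS{\l}{q}_2\equiv\l^{(q^2-1)/2}=(\l^{q+1})^{(q-1)/2}\equiv\Norm(\l)^{(q-1)/2}\pmod q$, because Frobenius $x\mapsto x^q$ on $\FF_{q^2}$ is the nontrivial element of $\Gal(K/\QQ)$, so $\l^{q+1}\equiv\l\bar\l=\Norm(\l)$. Combining the two identities with rational quadratic reciprocity applied to the coprime odd integers $k$ and $m=\Norm(\l)$ gives $\LS{k}{\l}_2\LS{\l}{k}_2=\LS{k}{m}_\QQ\LS{m}{k}_\QQ=(-1)^{\frac{m-1}{2}\cdot\frac{k-1}{2}}$, which is the assertion after multiplying through by $\LS{\l}{k}_2=\pm1$.

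For part~(b) the idea is to write each argument as a unit times a product of \emph{primary} primes (those $\equiv2\pmod{3\Ocal_K}$) and to apply cubic reciprocity $\CS{\rho}{\sigma}=\CS{\sigma}{\rho}$ to each pair of primary primes. Factor $k=\pm\prod_i\rho_i$ and $\l=v\prod_j\sigma_j$ with all $\rho_i,\sigma_j$ primary and $v\in\bfmu_6$. Using multiplicativity together with $\CS{\pm1}{\,\cdot\,}=1$ and $\CS{\,\cdot\,}{u}=1$ for any unit $u$, one finds $\CS{k}{\l}=\prod_{i,j}\CS{\rho_i}{\sigma_j}$ while $\CS{\l}{k}=\CS{v}{k}\prod_{i,j}\CS{\sigma_j}{\rho_i}$. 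Prime-by-prime reciprocity makes the two double products equal, so $\CS{k}{\l}=\CS{v}{k}^{-1}\CS{\l}{k}=\CS{v^{-1}}{k}\CS{\l}{k}$. It then remains to identify $v^{-1}$ with $\z$ up to sign: since $\prod_j\sigma_j\equiv\pm1\pmod3$ we get $\l\equiv\pm v\pmod3$, and comparing with the defining relation $\z\l\equiv\pm1\pmod3$ forces $v^{-1}=\pm\z$; as $\CS{-1}{k}=1$ (because $-1=(-1)^3$ is a cube), this yields $\CS{v^{-1}}{k}=\CS{\z}{k}$ and hence $\CS{k}{\l}=\CS{\z}{k}\CS{\l}{k}$.

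The routine ingredients are multiplicativity of the symbols and the cited reciprocity laws; the step requiring genuine care is the unit bookkeeping in~(b). Because primary elements are not closed under multiplication, I would resist primarizing $k$ and $\l$ themselves and instead factor into primary \emph{primes}—this is precisely what makes the $\CS{\z}{k}$ factor emerge cleanly while the unit attached to $k$ disappears via $\CS{\pm1}{\,\cdot\,}=1$. A secondary nuisance is the sign of $k$ in~(a): for $k<0$ the rational symbols should be read with the Kronecker/Jacobi extension (or one reduces to $|k|$ and tracks the effect on $\tfrac{k-1}{2}$), but this does not disturb the structure of the argument.
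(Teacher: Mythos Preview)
Your argument is correct, and it takes a genuinely different route from the paper's. The paper proves a single sextic reciprocity identity (using Lemmermeyer's $E$-primary formalism) and then extracts~(a) by cubing and~(b) by squaring. You instead treat the two parts separately with more elementary tools: for~(a) you reduce the $\Ocal_K$-quadratic symbols to rational Jacobi symbols via the norm identities $\LS{k}{\l}_2=\LS{k}{\Norm\l}_\QQ$ and $\LS{\l}{k}_2=\LS{\Norm\l}{k}_\QQ$, then invoke ordinary Jacobi reciprocity; for~(b) you factor both arguments into primary primes and apply Eisenstein's cubic reciprocity $\CS{\rho}{\sigma}=\CS{\sigma}{\rho}$ termwise, with the unit bookkeeping producing the $\CS{\z}{k}$ factor. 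Your approach has the advantage of using only the standard rational quadratic law and the Ireland--Rosen cubic law, avoiding sextic reciprocity and the $E$-primary notion entirely; the paper's approach has the virtue of being uniform (one reciprocity step yields both parts at once). Both arguments handle the unit ambiguity in essentially the same way---by observing that $\bfmu_6\to(\Ocal_K/3\Ocal_K)^*$ is an isomorphism---and your remark that $k=\pm\prod\rho_i$ with the sign in $\{\pm1\}$ is justified by noting that $k=\bar k$ forces the unit to be real.
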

\begin{proof}
Let $\a,\b\in\ZZ[\o]$ satisfy $\gcd(\a,\b)=\gcd(\a\b,6)=1$.  We start
with the sextic reciprocity law for~$\QQ(\o)$ as stated
in~\cite[Theorem~7.10]{Lemmermeyer}. This says that if~$\a$ and~$\b$
are ``$E$-primary'' (see~\cite{Lemmermeyer} for terminology), then
\begin{equation}
  \label{eqn:qrinO1}
  \SS{\a}{\b}\SS{\b}{\a}^{-1} = 
  (-1)^{\frac{\Norm(\a)-1}{2}\cdot\frac{\Norm(\b)-1}{2}}.
\end{equation}
Let $\r=\o^2$ denote a primitive cube root of unity. Then
for~$\a\in\Ocal_K$ satisfying $\gcd(6,\a)=1$, we have by definition
\[
  \text{$\a$ is $E$-primary}
  \quad\Longleftrightarrow\quad
  \left\{
  \begin{tabular}{@{}l@{}}
    $\a\equiv\pm1\pmod3$ and\\
    $\a^3=A+B\rho$ with $A+B\equiv1\pmod4$.\\
  \end{tabular}
  \right.
\]
(This is a corrected version of~\cite[Lemma~7.9]{Lemmermeyer}, which
omits the $\a\equiv\pm1\pmod3$ condition and includes a superfluous
$3\mid B$ requirement.)\FOOTNOTE{%
%%%%%%%%%%%%%%%%%%%%%%%%%%%%%%%%%%%%%%%%%%%%%%%%%%%%%%%%%%%%%%%%%%%%%%
We assume $\a=a+b\r$ with $3\mid b$. Note the assumption
$\gcd(\a,2)=1$ ensures that one of~$a$ or $b$ is odd.  Further
$\a^3=(a^3-3ab^2+b^3)+(3ab(a-b)\rho$, so $A+B=a^3+3a^2b-6ab^2+b^3$. We
consider three cases. (I) $2\mid b$.  Then $A+B\equiv a^3+3a^2b\equiv
a+b\pmod{4}$, since $a^2\equiv1\pmod4$. (II) $2\mid a$. Then
$A+B\equiv b^3 \equiv b\pmod4$. (III) $2\nmid ab$. Then
$A+B\equiv a+3b-6a+b\equiv a\pmod4$. Thus in all three cases, 
the condition $A+B\equiv1\pmod4$ is equivalent to the condition on~$a$
and~$b$ for~$\a$ to be~$E$-primary.
}
%%%%%%%%%%%%%%%%%%%%%%%%%%%%%%%%%%%%%%%%%%%%%%%%%%%%%%%%%%%%%%%%%%%%%%
We note that if~$\a\equiv\pm1\pmod{3}$, then
exactly one of~$\pm\a$ is $E$-primary. 
\par
We now consider~$k\in\ZZ$ and~$\l\in\Ocal_K$ as in the statement of
the lemma. Since~$k$ is an integer and satisfies $\gcd(6,k)=1$, we
have
\begin{align*}
  \text{$k$ is $E$-primary}
  &\quad{\Longleftrightarrow}\quad
  k\equiv\pm1\pmod3\quad\text{and}\quad k^3\equiv1\pmod4\\
  &\quad{\Longleftrightarrow}\quad
  k\equiv1\pmod{4},
\end{align*}
so $(-1)^{(k-1)/2}k$ is $E$-primary.  We also note that for
any~$\a\in\Ocal_K$ satisfying~$\gcd(6,\a)=1$, Euler's formula says
that
\begin{equation}
  \label{eqn:ssminus1}
  \SS{-1}{\a} \equiv (-1)^{\frac{\Norm(\a)-1}{6}} \pmod{\a\Ocal_K},
\end{equation}
and since both sides of~\eqref{eqn:ssminus1} are sixth roots of unity,
the congruence~\eqref{eqn:ssminus1} is an equality. In particular,
\begin{equation}
  \label{eqn:SSminus1k}
  \SS{-1}{k} = (-1)^{\frac{\Norm(k)-1}{6}} = (-1)^{\frac{k^2-1}{6}} = 1.
\end{equation}
\par
It is an easy exercise to verify that there is a unique~$\z\in\bfmu_3$
such that $\z\l\equiv\pm1\pmod3$, cf.\ \cite[Chapter~9,
Proposition~9.3.5]{IrelandRosen}.\FOOTNOTE{%
%%%%%%%%%%%%%%%%%%%%%%%%%%%%%%%%%%%%%%%%%%%%%%%%%%%%%%%%%%%%%%%%%%%%%%
Writing $\l=a+b\r$ where~$\r=\o^2$ is a primitive cube root of unity, we
have $\r\l=-b+(a-b)\r$ and $\r^2\l=b-a-a\r$. The  condition $\gcd(3,\l)=1$
ensures that at most one of~$a$,~$b$, and~$a-b$ is divisible by~$3$. 
Suppose that none of them is divisible by~$3$. Then $a\equiv \pm1\pmod3$
and $b\equiv\mp1\pmod3$ with opposite signs, so
\[
  \l \equiv \pm1\mp\r \equiv \pm(1-\r) \equiv \pm\frac{3-\sqrt{-3}}{2}
  \pmod{3}.
\]
This contradicts the assumption that $\gcd(3,\l)=1$, hence (exactly)
one of~$a$,~$b$, and~$a-b$ is divisible by~$3$.
%%%%%%%%%%%%%%%%%%%%%%%%%%%%%%%%%%%%%%%%%%%%%%%%%%%%%%%%%%%%%%%%%%%%%%
}
Then one of $\pm\z\l$ is~$E$-primary, so we can
apply~\eqref{eqn:qrinO1} to the $E$-primary numbers
$\a=(-1)^{(k-1)/2}k$ and~$\b=\pm\z\l$.  Then~\eqref{eqn:qrinO1}
becomes
\[
  \SS{(-1)^{(k-1)/2}k}{\l}\SS{\pm\z\l}{k}^{-1} = 
  (-1)^{\frac{k^2-1}{2}\cdot\frac{\Norm(\l)-1}{2}} = 1.
\]
(The second equality comes from the fact that $k^2\equiv1\pmod4$.)
Hence
\[
  \SS{-1}{\l}^{(k-1)/2} \SS{k}{\l}  
     \SS{\pm1}{k}^{-1} \SS{\z}{k}^{-1} \SS{\l}{k}^{-1} =1.
\]
Using~\eqref{eqn:ssminus1} and~\eqref{eqn:SSminus1k} gives 
\begin{equation}
  \label{eqn:SSkl1Nl12k12}
  \SS{k}{\l} = (-1)^{\frac{\Norm(\l)-1}{2}\cdot\frac{k-1}{2}}
  \SS{\z}{k}\SS{\l}{k}.
\end{equation}
(We note in particular that the sign used to ensure that~$\z\l$
is~$E$-primary turns out to be irrelevant
because~\text{$\SS{-1}{k}=1$}.) Cubing~\eqref{eqn:SSkl1Nl12k12}
and using~$\z^3=1$ gives the quadratic reciprocity formula in~(a),
and similarly squaring~\eqref{eqn:SSkl1Nl12k12} gives the cubic
reciprocity formula in~(b).
\end{proof}

\begin{proof}[Justification for Conjecture~$\ref{conjecture:N1kN1M1kM1}$]
Let $p\in\Ncal_k$, so Theorem~\ref{theorem:j0charctr}(a) tells us
that~$p$ splits in~$\Ocal_K$, say $p\Ocal_K=\gp\bar\gp$. As in that
theorem, we let \text{$\gq=\bigl(1-\psi_E(\gp)\bigr)\Ocal_K$}. Then squaring
Theorem~\ref{theorem:j0charctr}(d) yields
\[
  p\in\Ncal_k^{[1]}
  \quad\Longleftrightarrow\quad
  \CS{4k}{\gp}\CS{4k}{\gq}=1.
\]
Further, Proposition~\ref{proposition:kpworw5} 
implies that $\CS{2}{\gp}\CS{2}{\gq}=1$, so we find that
\[
  p\in\Ncal_k^{[1]}
  \quad\Longleftrightarrow\quad
  \CS{k}{\gp}\CS{k}{\gq}=1.
\]
The (prime) ideals~$\gp$ and~$\gq$ are generated, respectively,
by the elements~$\psi_E(\gp)$ and $1-\psi_E(\gp)$, and
Theorem~\ref{theorem:j0charctr}(a) says that these elements satisfy
\begin{equation}
  \label{eqn:thm18bmod3}
  \psi_E(\gp)\bigl(1-\psi_E(\gp)\bigr) \equiv1\pmod{3\Ocal_K}.
\end{equation}
Hence if we choose $\xi\in\bfmu_6$ to satisfy
\[
  \xi\psi_E(\gp)\equiv \pm1 \pmod{3\Ocal_K},
\]  
then~\eqref{eqn:thm18bmod3} says that we also have
\[
  \xi^{-1}\bigl(1-\psi_E(\gp)\bigr)\equiv \pm1 \pmod{3\Ocal_K}.
\]
This allow us to apply cubic reciprocity (Lemma~\ref{lemma:srinQo}(b),
or~\cite[Chapter~9, Section~3, Theorem~1]{IrelandRosen}) to
compute
\begin{align*}
  \CS{k}{\gp}\CS{k}{\gq}
  &= \CS{k}{\xi\psi_E(\gp)\Ocal_K}\CS{k}{\xi^{-1}(1-\psi_E(\gp))\Ocal_K}\\
  &= \CS{\xi\psi_E(\gp)}{k\Ocal_K}\CS{\xi^{-1}(1-\psi_E(\gp))}{k\Ocal_K}\\
  &= \CS{\psi_E(\gp)}{k\Ocal_K}\CS{1-\psi_E(\gp)}{k\Ocal_K}.
\end{align*}
Hence
\begin{equation}
  \label{eqn:pNk1iffp1pk1}
  p\in\Ncal_k^{[1]}
  \quad\Longleftrightarrow\quad
  \CS{\psi_E(\gp)\bigl(1-\psi_E(\gp)\bigr)}{k\Ocal_K}=1.
\end{equation}
\par
We now consider how the values $\psi_E(\gp)$ are distributed in
$\Ocal_K/k\Ocal_K$ as~$p$ varies in~$\Ncal_k$.  If~$p$ were chosen
completely randomly,subject  only to $p\equiv1\pmod3$, then we
might expect the values of~$\psi_E(\gp)$ to be uniformly distributed
among the congruence classes in~$\Ocal_K/k\Ocal_K$. However,
Proposition~\ref{corollary:j0explicit}(a) tells us that~$\SS{k}{\gp}$
equals either~$\o$ or~$\o^5$, i.e., it is a primitive sixth root of
unity. Equivalently,
\begin{equation}
  \label{eqn:LSkCSkconstr}
  \LS{k}{\gp}_2 = -1
  \qquad\text{and}\qquad
  \CS{k}{\gp} = \o^2\text{ or }\o^4,
\end{equation}
i.e., neither~$\LS{k}{\gp}_2$ nor~$\CS{k}{\gp}$ equals~$1$.
This gives a constraint on the values of~$\psi_E(\gp)$ for
$p\in\Ncal_k$. Discarding finitely many elements of~$\Ncal_k$, we may
assume that $p\notdivide6k$, and then reciprocity
(Lemma~\ref{lemma:srinQo}) tells us that
\[
  \LS{k}{\gp}_2 = (-1)^{\frac{p-1}{2}\cdot\frac{k-1}{2}}\LS{\psi_E(\gp)}{k}_2
  \qquad\text{and}\qquad
  \CS{k}{\gp} = \CS{\z}{k}\CS{\psi_E(\gp)}{k},
\]
where~$\z\in\bfmu_3$ satisfies $\z\psi_E(\gp)\equiv\pm1\pmod3$.  (Note
that $\Norm\bigl(\psi_E(\gp)\bigr)=p$.)  Hence the
constraints~\eqref{eqn:LSkCSkconstr} on~$\psi_E(\gp)$ from
Proposition~\ref{corollary:j0explicit}(a) become
\begin{equation}
  \label{eqn:m1p12k12SS}
   \LS{\psi_E(\gp)}{k}_2 = -(-1)^{\frac{p-1}{2}\cdot\frac{k-1}{2}}
  \qquad\text{and}\qquad
  \CS{\z}{k}\CS{\psi_E(\gp)}{k} = \o^2\text{ or }\o^4.
\end{equation}
\par
We now make the following two assumptions, which are supported by
experiments:
\begin{itemize}
\item
For $p\in\Ncal_k$, the value of $p\bmod4$ is equally likely to be~$1$ or~$3$.
\item
For $p\in\Ncal_k$, the value of~$\z$ in~\eqref{eqn:m1p12k12SS} is 
equally likely to be~$1$,~$\o^2$, or~$\o^4$.
\end{itemize}
These assumptions have the following consequences:
\begin{itemize}
\item
If $k\equiv3\pmod4$, then the first equation in~\eqref{eqn:m1p12k12SS} 
has no effect on the value of $\psi_E(\gp)\bmod k$.
\item
If $k\not\pequiv\pm1\pmod9$, i.e., if cube roots of unity
in~$\Ocal_K/k\Ocal_K$ are not necessarily cubes, then the second
equation in~\eqref{eqn:m1p12k12SS} has no effect
on the value of $\psi_E(\gp)\bmod k$.
\end{itemize}
On the other hand, if $k\equiv1\pmod4$, then the first equation
in~\eqref{eqn:m1p12k12SS} gives the constraint
$\LS{\psi_E(\gp)}{k}_2=-1$; and similarly, if $k\pequiv\pm1\pmod9$,
then the second equation in~\eqref{eqn:m1p12k12SS} imposes the
condition $\CS{\psi_E(\gp)}{k}\ne1$. Thus considering the four cases,
we see that~$\psi_E(\gp)$ is in the set~$M_k$.
Further, we note that~\eqref{eqn:pNk1iffp1pk1} says
that~$p\in\Ncal_k^{[1]}$ if and only if $\psi_E(\gp)\in\Mcal_k^{[1]}$.
Hence it is reasonable to conjecture that the density of~$\Ncal_k^{[1]}$
in~$\Ncal_k$ is given by the ratio~$\#\Mcal_k^{[1]}/\#\Mcal_k$.
\end{proof}

Conjecture~\ref{conjecture:N1kN1M1kM1} is reasonably satisfactory in
that the sets~$\Mcal_k$ and~$\Mcal_k^{[1]}$ are easy to compute for
any particular (not-too-large) value of~$k$.  In the remainder of this
section  we derive explicit formulas for~$\#\Mcal_k$
and~$\#\Mcal_k^{[1]}$ when~$k$ is prime. We do this by breaking them
up into subsets of the following sort.  For any
ideal~$\gK\subset\Ocal_K$ and any roots of unity~$\z\in\bfmu_6$ and
$\xi\in\bfmu_3$, we define
\begin{align*}
  M_\gK(\z) 
  &= \left\{\l\in\OK{\gK} : \SS{\l}{\gK}=\z \right\}\\
  &= \left\{\l\in\OK{\gK} : \LS{\l}{\gK}_2=\z^3 
          \text{ and }\CS{\l}{\gK}=\z^2 \right\},\\
  M_\gK^{[1]}(\z,\xi) &= \left\{\l\in M_\gK(\z) : \CS{\l(1-\l)}{\gK}=\xi\right\}.
\end{align*}
As before, if~$\gK=k\Ocal_K$ is principal, we write $M_k(\z)$
and~$M_k^{[1]}(\z,\xi)$. Further, if~$S\subset\bfmu_6$ is any set of
roots of unity, we write~$M_\gK(S)$ for the union of~$M_\gK(\z)$
with~$\z\in S$.  With this notation, the four cases defining~$M_k$ are
given by
\par
\begin{center}
  \renewcommand{\arraystretch}{1.5}
\begin{tabular}{rl@{\qquad}rl}
  \textup{(a)}&$M_k = M_{k}\bigl(\{\o,\o^5\}\bigr)$,
  &\textup{(b)}&$M_k = M_{k}\bigl(\{\o,\o^3,\o^5\}\bigr)$,\\
  \textup{(c)}&$M_k = M_{k}\bigl(\{\o,\o^2,\o^4,\o^5\}\bigr)$,
  &\textup{(d)}&$M_k = M_{k}(\bfmu_6)$,
\end{tabular}
\end{center}
and in all cases,~$M_k^{[1]}=M_k^{[1]}(S,1)$, where~$S\subset\bfmu_6$ is the
set for the appropriate case.
\par
We now restrict attention to the case that~$k\in\ZZ$ is a rational prime
with $\gcd(6,k)=1$. If $k\equiv2\pmod3$, so~$k$ is inert in~$K$,
then the computation of~$M_k$ and~$M_k^{[1]}$
takes place in the field $\Ocal_K/k\Ocal_K\cong\FF_{k^2}$
with~$k^2$~elements.  On the other hand, if $k\equiv1\pmod3$,
so~$k$ splits as $k\Ocal_K=\gK\bar\gK$, then
\[
  \frac{\Ocal_K}{k\Ocal_K}
  \cong\frac{\Ocal_K}{\gK}\times\frac{\Ocal_K}{\bar\gK}
  \cong\FF_k\times\FF_k.
\]
In this case a condition such as $\CS{\l}{k}\ne1$ becomes more
complicated, since there are many ways for the product
$\CS{\l}{\gK}\CS{\l}{\bar\gK}$ to be different from~$1$.

%%%%%%%%%%%%%%%%%%%%%%%%%%%%%%%%%%%%%%%%%%%%%%%%%%%%%%%%%%%%%%%%%%%%%%
\begin{proposition}
\label{propostion:MkStable}
Let $k\ge5$ be a rational prime.  The following table gives the values
of~$\#M_k(S)$ for various subsets $S\subset\bfmu_6$, divided into
cases according to whether~$k$ is split or inert
in~$K=\QQ(\sqrt{-3}\,)$.
\begin{center}
\renewcommand{\arraystretch}{1.25}
\begin{tabular}{|c|c||c|c|} \cline{3-4}
  \multicolumn{2}{c||}{}
  &$k\equiv1\pmod3$&$k\equiv2\pmod3$\\ \hline\hline
  \textup{(a)} & $\#M_{k}\bigl(\{\o,\o^5\}\bigr)$
     & $\frac13(k-1)(k-3)$ & $\frac13(k^2-1)$ \\ \hline
  \textup{(b)} & $\#M_{k}\bigl(\{\o,\o^3,\o^5\}\bigr)$
     & $\frac12(k-1)(k-3)$ & $\frac12(k^2-1)$ \\ \hline
  \textup{(c)} & $\#M_{k}\bigl(\{\o,\o^2,\o^4,\o^5\}\bigr)$
     & $\frac{2}{3}(k-1)(k-3)$ & $\frac23(k^2-1)$ \\ \hline
  \textup{(d)} & $\#M_{k}(\bfmu_6)$
     & $(k-2)^2$ & $k^2-2$ \\ \hline
\end{tabular}
\end{center}
\end{proposition}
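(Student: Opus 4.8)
The plan is to reduce the computation of $\#M_k(S)=\sum_{\z\in S}\#M_k(\z)$ to counting the fibres of the sextic-symbol character $\l\mapsto\SS{\l}{k}$ on the unit group of $\Ocal_K/k\Ocal_K$, treating the two columns of the table according to whether $k$ is inert or split in $K$. In both cases the structural fact I would rely on is that the restriction of $\SS{\cdot}{k}$ to units is a surjective homomorphism onto $\bfmu_6$ (this is legitimate because $6$ divides $k^2-1$ when $k$ is inert and $k-1$ when $k$ is split, using that $k$ is odd and $k\equiv\pm1\pmod3$), and that $\OK{k}$ is obtained from the unit group by deleting the classes for which $1-\l$ fails to be a unit.

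In the inert case $k\equiv2\pmod3$, the ring $\Ocal_K/k\Ocal_K\cong\FF_{k^2}$ is a field, so $\OK{k}=\FF_{k^2}\setminus\{0,1\}$ has $k^2-2$ elements, and $\SS{\cdot}{k}\colon\FF_{k^2}^*\to\bfmu_6$ is the map $\l\mapsto\l^{(k^2-1)/6}$, a surjection with every fibre of size $(k^2-1)/6$. For $\z\ne1$ the whole fibre lies in $\OK{k}$ since $\SS{1}{k}=1$, contributing $(k^2-1)/6$; hence $\#M_k(S)=|S|\cdot(k^2-1)/6$ in cases (a),(b),(c), which are exactly $\tfrac13(k^2-1),\tfrac12(k^2-1),\tfrac23(k^2-1)$. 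Case (d) with $S=\bfmu_6$ simply recovers all of $\OK{k}$, giving $k^2-2$.

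In the split case $k\equiv1\pmod3$, the CRT isomorphism $\Ocal_K/k\Ocal_K\cong\FF_k\times\FF_k$ identifies $\OK{k}$ with $(\FF_k\setminus\{0,1\})^2$, and multiplicativity of the symbol in the denominator gives $\SS{\l}{k}=\chi_1(\l_1)\chi_2(\l_2)$, where $\chi_1,\chi_2\colon\FF_k^*\to\bfmu_6$ are the surjective characters attached to the factors $\gK,\bar\gK$. First I would count over $\FF_k^*\times\FF_k^*$: for a fixed $\z$ the number of $(\l_1,\l_2)$ with $\chi_1(\l_1)\chi_2(\l_2)=\z$ is $6\cdot((k-1)/6)^2=(k-1)^2/6$, independent of $\z$. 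Then I would remove the classes with $\l_1=1$ or $\l_2=1$ by inclusion--exclusion: each of the two conditions contributes $(k-1)/6$ classes of symbol $\z$, while the single overlap class $(1,1)$ has symbol $1$. This yields $\#M_k(\z)=(k-1)(k-3)/6$ for $\z\ne1$ and $\#M_k(1)=(k-1)(k-3)/6+1$. Summing over each $S$ reproduces the split column: cases (a),(b),(c) contain no $\z=1$ and give $\tfrac13(k-1)(k-3),\tfrac12(k-1)(k-3),\tfrac23(k-1)(k-3)$, while case (d) combines the extra $+1$ with $6\cdot(k-1)(k-3)/6$ to produce $(k-2)^2$.

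The character-fibre counts are routine; the delicate step is the split-case bookkeeping, where the naive density $|S|/6$ must be corrected for the deleted classes $\l_1=1$ and $\l_2=1$, and where it is precisely the diagonal class $(1,1)$---lying in the $\z=1$ fibre---that forces the asymmetric ``$+1$'' and hence the clean factorization $(k-2)^2$ in case (d). The one point I would verify carefully is that $\SS{\l}{\gK}$ is genuinely well-defined as an element of $\bfmu_6$ rather than only modulo $\gK$; this holds because the reduction $\bfmu_6\hookrightarrow\FF_k^*$ is injective when $6\mid k-1$, which is what permits writing $\SS{\l}{k}=\chi_1(\l_1)\chi_2(\l_2)$ as an honest product of $\bfmu_6$-valued characters.
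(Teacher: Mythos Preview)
Your proposal is correct and follows essentially the same approach as the paper: in both cases the argument rests on the surjectivity of the sextic (respectively quadratic, cubic) character on the unit group, together with the removal of the classes where $1-\l$ fails to be a unit. The only difference is organizational---you compute $\#M_k(\z)$ for each individual $\z\in\bfmu_6$ and then sum over~$S$, whereas the paper handles each set~$S$ directly (e.g.\ interpreting $M_k(\{\o,\o^3,\o^5\})$ as the set of quadratic non-residues); your version is slightly more uniform but the substance is identical.
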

\begin{proof}
Suppose first that~$k$ is inert, so~$\Ocal_K/k\Ocal_K\cong\FF_{k^2}$. 
Then~$\#M_k(\bfmu_6)$ simply counts the~$\l\in\FF_{k^2}$ such that~$\l$ 
and~$1-\l$ are units, so is equal to~$k^2-2$. Next, 
$\#M_{k}\bigl(\{\o,\o^3,\o^5\}\bigr)$ counts the quadratic non-residues 
in~$\FF_{k^2}$, of which there are~$\frac12(k^2-1)$. (Here the 
condition that \text{$1-\l$} be a unit is irrelevant, since~$1$ is a 
quadratic residue).  Similarly, 
$\#M_{k}\bigl(\{\o,\o^2,\o^4,\o^5\}\bigr)$ counts cubic non-residues 
in~$\FF_{k^2}$, of which there are~$\frac23(k^2-1)$.  Finally, 
$\#M_{k}\bigl(\{\o,\o^5\}\bigr)$ counts the elements that are neither 
quadratic nor cubic residues, of which there are~$\frac13(k^2-1)$. 
\par 
Next suppose that~$k$ splits, so 
$\Ocal_K/k\Ocal_K\cong\FF_k\times\FF_k$.  Then~$\#M_k(\bfmu_6)$ counts 
$(a,b)\in\FF_k^2$ with neither~$a$ nor~$b$ equal to~$0$ or~$1$.  This 
gives \text{$k-2$} possibilities for each of~$a$ and~$b$, 
so~$\#M_k(\bfmu_6)=(k-2)^2$.   
\par 
The required calculations for each of the remaining cases have much in
common, so we will only illustrate the case of
$M_k\bigl(\{\o,\o^3,\o^5\}\bigl)$.  Exactly~$\frac12$ of
invertible~$(a,b)$ are quadratic non-residues.  Therefore, there
are~$\frac12 (k-1)^2$ such elements.  Of these, there are~$\frac12
(k-1)$ of the form~$(1,b)$ and~$\frac12 (k-1)$ of the form~$(a,1)$.
The set~$M_{k}\bigl(\{\o,\o^3,\o^5\}\bigr)$ counts invertible~$(a,b)$
that are quadratic non-residues having $a \neq 1$ and $b \neq 1$.
Therefore
\[ 
\#M_k\bigl(\{\o,\o^3,\o^5\}\bigr) = \frac12(k-1)^2 - 2\left( \frac12 (k-1) \right)  
= \frac12 (k-1)(k-3). 
\] 
(Note that $(1,1)$ is a quadratic residue, so the invertible
non-residues of the form $(a,1)$ and $(1,b)$ are disjoint.)  A similar
argument applies to the two remaining cases, where we rely on the fact
that invertible elements of $\FF_k \times \FF_k$ and of $\FF_k$ fall
evenly into the six sextic residue classes.
\end{proof}

The table in Proposition~\ref{propostion:MkStable} gives the value
of~$\#M_k(S)$ for the four subsets~$S\subset\bfmu_6$ that appear in
Conjecture~\ref{conjecture:N1kN1M1kM1}.  It remains to construct a
similar table for the values of~$\#M_k^{[1]}(S)$. It turns out that
these values can be expressed in terms of the number of points on a
certain curve of genus four over various finite fields. We begin with
a description of the curve that we need, after which we count points
in order to compute the desired values.

\begin{proposition}
\label{proposition:ECDcurves}
Let~$\FF$ be a perfect field of characteristic not equal to~$2$
or~$3$.  For $\k\in\FF^*$ we define~$E^{(\k)}$ to be
the elliptic curve
\[
  E^{(\k)} : y^2 = x^3 + \k,  
\]
and for $\g,\d\in\FF^*$ we define $C_6^{(\g,\d)}$
to be a smooth projective model for the algebraic
curve given by the affine equation
\[
  C_6^{(\g,\d)} :  \g z^6(1-\g z^6) = \d x^3.
\]
\vspace{-10pt}
\begin{parts}
\Part{(a)}
The curve $C_6^{(\g,\d)}$ has genus four.
\Part{(b)}
There are finite maps from $C_6^{(\g,\d)}$ to curves of the form~$E^{(\k)}$
given by the following formulas\textup:
\begin{align*}
   C_6^{(\g,\d)} &\longrightarrow E^{(16\d^2)},
  & (x,z) &\longmapsto (-4\d x,8\g\d z^6-4\d),  \\
   C_6^{(\g,\d)} &\longrightarrow E^{(4\g^3\d^4)},
  & (x,z) &\longmapsto 
    \left(\dfrac{\d^2 x^2}{z^6},\g^2\d^2 z^3+\dfrac{\g\d^2}{z^3}\right), \\
   C_6^{(\g,\d)} &\longrightarrow E^{(\g^5\d^2)}, 
  & (x,z) &\longmapsto 
     \left(\dfrac{\g\d x}{z^4},\dfrac{\g^2\d}{z^3}\right), \\
   C_6^{(\g,\d)} &\longrightarrow E^{(-\g\d^2)}, 
  & (x,z) &\longmapsto 
     \left(-\dfrac{\d x}{z^2},\g\d z^3\right).
\end{align*}
\Part{(c)}
The maps in~\textup{(b)} are independent, hence they induce an isogeny
\[
  E^{(16\d^2)}\times E^{(4\g^3\d^4)}\times E^{(\g^5\d^2)}\times E^{(-\g\d^2)}
  \longrightarrow  J_6^{(\g,\d)}
  \;\stackrel{\textup{def}}{=}\; \Jac(C_6^{(\g,\d)}) .
\]
\Part{(d)}
For any prime~$\ell$ different from the characteristic of~$\FF$, we have
isomorphisms of $\Gal(\bar\FF/\FF)$-modules,
\begin{align*}
  H^1_\et\bigl({C_{6/\FF}^{(\g,\d)}},\QQ_\ell\bigr)
  &\cong H^1_\et\bigl({J_{6/\FF}^{(\g,\d)}},\QQ_\ell\bigr) \\
  &\cong H^1_\et\bigl(E^{(16\d^2)}_{/\FF},\QQ_\ell\bigr) \times
    H^1_\et\bigl(E^{(4\g^3\d^4)}_{/\FF},\QQ_\ell\bigr)  \\
  &\qquad{}
    \times H^1_\et\bigl(E^{(\g^5\d^2)}_{/\FF},\QQ_\ell\bigr) \times
    H^1_\et\bigl(E^{(-\g\d^2)}_{/\FF},\QQ_\ell\bigr).
\end{align*}
\end{parts}
\end{proposition}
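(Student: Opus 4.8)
The plan is to handle the four parts in order, with essentially all the genuine content sitting in part~(c).

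For part~(a) I would study $C_6^{(\g,\d)}$ through the degree-three projection $\pi\colon C_6^{(\g,\d)}\to\PP^1$, $(x,z)\mapsto z$, which presents the curve as the cyclic Kummer cover $x^3=\g\d^{-1}z^6(1-\g z^6)$ of the $z$-line. Since $\operatorname{char}\FF\ne3$ this cover is tame, and its ramification is read off from the order of vanishing modulo~$3$ of $f(z)=\g\d^{-1}z^6(1-\g z^6)$. At $z=0$ that order is $6$ and at $z=\infty$ it is $-12$, both divisible by~$3$, so $\pi$ is unramified over those points; the only branching is over the six simple zeros of $1-\g z^6$, where $\pi$ is totally ramified with $e=3$. (The cover is connected because $f$ has simple zeros, hence is not a cube in $\overline\FF(z)$.) Riemann--Hurwitz then gives $2g-2=3(-2)+6\cdot(3-1)=6$, so $g=4$.

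For part~(b), each displayed formula is a nonconstant rational map of the affine curves; since source and target are smooth projective curves it extends to a morphism, which is finite because it is nonconstant. That the image lands on the stated $E^{(\k)}$ is a direct substitution using $\d x^3=\g z^6(1-\g z^6)$. For the first map $(X,Y)=(-4\d x,\,8\g\d z^6-4\d)$, for example, one gets $X^3+16\d^2=16\d^2(1-4\d x^3)$ and $Y^2=16\d^2(2\g z^6-1)^2$, and these agree exactly when $\d x^3=\g z^6(1-\g z^6)$; the remaining three are verified identically after clearing denominators against the same relation.

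The main step is part~(c), and here I would establish independence via the automorphism $\tau\colon(x,z)\mapsto(x,\z z)$ of $C_6^{(\g,\d)}$, where $\z$ is a primitive sixth root of unity in $\overline\FF$ (available as $\operatorname{char}\FF\ne2,3$). Writing $\phi_1,\dots,\phi_4$ for the four maps of~(b), with targets $E_1,\dots,E_4$, and $\omega_i$ for an invariant differential on $E_i$, a short computation of each pullback --- for instance $\phi_1^*\omega_1=-\tfrac12\,dx/(2\g z^6-1)$ and $\phi_4^*\omega_4=\g\d^{-1}z^6x^{-2}\,dz$ --- shows that $\tau^*$ scales $\phi_i^*\omega_i$ by $\z^{m_i}$ with $(m_1,m_2,m_3,m_4)=(0,3,5,1)$. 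These exponents are distinct modulo~$6$, so the four regular forms $\phi_i^*\omega_i$ lie in distinct eigenspaces of $\tau^*$ on $H^0(C_6^{(\g,\d)},\Omega^1)$ and are therefore linearly independent; since $\dim H^0(C_6^{(\g,\d)},\Omega^1)=g=4$ by~(a), they are a basis. Consequently the pushforward $\Jac(C_6^{(\g,\d)})\to\prod_iE_i$ has cotangent map $(\eta_i)_i\mapsto\sum_i\phi_i^*\eta_i$, which is an isomorphism of four-dimensional spaces, so this homomorphism is an isogeny; dually the stated pullback map $\prod_iE_i\to\Jac(C_6^{(\g,\d)})$ is an isogeny as well. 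I expect the verification that these eigenvalues are pairwise distinct to be the one genuinely load-bearing computation.

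Finally, part~(d) is formal. The isogeny of~(c) is defined over~$\FF$ (the $\phi_i$ have coefficients in~$\FF$), so for $\ell\ne\operatorname{char}\FF$ it induces a $\Gal(\overline\FF/\FF)$-equivariant isomorphism on $H^1_\et(\,\cdot\,,\QQ_\ell)$, an isogeny affecting only finite torsion. Combining this with the canonical Galois-equivariant isomorphism $H^1_\et(C_6^{(\g,\d)},\QQ_\ell)\cong H^1_\et(\Jac(C_6^{(\g,\d)}),\QQ_\ell)$ and the K\"unneth decomposition $H^1_\et(\prod_iE_i,\QQ_\ell)\cong\bigoplus_iH^1_\et(E_i,\QQ_\ell)$ yields the stated chain of isomorphisms of Galois modules.
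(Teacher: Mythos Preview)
Your proof is correct, and for parts~(a) and~(c) it takes a genuinely different route from the paper.

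For~(a), the paper computes the genus via the degree-$6$ map $C_6^{(1,1)}\to C_1$, $(x,z)\mapsto(x,z^6)$, to the elliptic curve $C_1:z(1-z)=x^3$, whereas you use the degree-$3$ projection to~$\PP^1$ via the $z$-coordinate; both are straightforward applications of Riemann--Hurwitz. For~(c), the paper offers three approaches and details one of them in an appendix: untwist all four maps to a common target~$E^{(1)}$ over~$\overline{\QQ}$ and separate them using the action of $\Gal(\overline{\QQ}/\QQ)$ on the untwisting isomorphisms. That argument only treats characteristic zero directly, with other characteristics left to the two sketched alternatives (viewing~$\g,\d$ as indeterminates so that the four targets are non-isogenous, or a positive-definiteness argument via the intersection pairing on $\operatorname{Map}(C_6,E)$). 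Your eigenspace decomposition of~$\tau^*$ on $H^0(C_6^{(\g,\d)},\Omega^1)$ is more uniform: it works in every characteristic $\ne2,3$, it directly yields linear independence of the~$\phi_i^*\omega_i$ (hence the isogeny statement via the cotangent criterion), and it exploits precisely the $\bfmu_6$-action that the paper itself invokes to explain the \emph{origin} of the four maps in~(b). The paper's Galois method is perhaps more conceptual once the untwists are written down, but your approach avoids both the characteristic restriction and the passage through an appendix, at the modest cost of computing four pullback differentials and checking their eigenvalues $\z^0,\z^3,\z^5,\z^1$ are distinct.
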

\begin{proof}
(a)\enspace
All of the~$C_6^{(\g,\d)}$ curves are geometrically isomorphic, 
so it suffices to calculate the genus of~$C_6^{(1,1)}$, which for
convenience we denote~$C_6$. A simple calculation shows that
the projective closure of~$C_6$ in~$\PP^2$ is singular at~$(0,0)$
and at the point at infinity, and that each of these singular points
resolves to three points on the smooth model.
(See Proposition~\ref{proposition:GKzformula} for details.)
We let~$C_1$ be the elliptic curve
\[
  C_1 : z(1-z) = x^3,
\]
and we consider the natural degree~$6$ map
\[
  \psi: C_6 \longrightarrow C_1,\qquad
  (x,z) \longmapsto (x,z^6).
\]
The map~$\psi$ is ramified only at~$(0,0)$ and~$\infty$,
the sets $\psi^{-1}(0,0)$ and $\psi^{-1}(\infty)$ each consist of three
points,  and each of these points has ramification index~$2$.
Applying the Riemann--Hurwitz genus formula to~$\psi$ gives
\[
  2g(C_6)-2 = 6\bigl(2g(C_1)-2\bigr) 
  + \sum_{P\in C_1} \bigl(e_P(\psi)-1\bigr)
  = 6(2-2) + 6(2-1)
  = 6.
\]
Hence $g(C_6)=4$.
%%%%%%%%%%%%%%%%%%%%%%%%%%%%%%%%%%%%%%%%%%%%%%%%%%%%%%%%%%%%%%%%%%%%%%
\par\noindent(b)\enspace
It is an exercise to verify that the given maps are well-defined, but
we briefly comment on their origin.  The automorphism group
of the curve~$C_6^{(\g,\d)}$ is fairly large, since 
\[
  \bfmu_3\times\bfmu_6\subset \Aut(C_6^{(\g,\d)}), \qquad
  [\z,\xi](x,z) = (\z x,\xi z).
\]
Taking quotients of~$C_6^{(\g,\d)}$ by various subgroups
of~\text{$\bfmu_3\times\bfmu_6$} gives maps to curves of lower genus,
which in turn give the four maps described in~(b).
\par\noindent(c)\enspace
From general principles, the maps in~(b) induce isogenies $E^{(\k)}\to
J_6^{(\g,\d)}$ for the given values of~$\k$.  There are various ways
to see that these isogenies are independent.  For example, one can use
the fact that the four~$E^{(\k)}$ are non-isogenous over~$\CC(\g,\d)$,
treating~$\g$ and~$\d$ as indeterminates.  
Or, at least in characteristic~$0$, one can take~$\g=\d=1$,
untwist to get four maps~$C_6^{(1,1)}\to E^{(1)}$ defined over~$\Qbar$,
and use the action of~$\Gal(\Qbar/\QQ)$ on the maps to show
that they are independent. (See Appendix~\ref{appendix:indepofmaps}.)
Or, for a purely geometric proof, one can use intersection theory and
the fact that the pairing
\begin{gather*}
  \langle\,\cdot\,,\,\cdot\,\rangle
  : \operatorname{Map}(C_6^{(1,1)},E^{(1)})/E^{(1)}\to\ZZ,\\
  \langle\f,\psi\rangle = \deg(\f+\psi)-\deg\f-\deg\psi,
\end{gather*}
is a positive definite quadratic form. (The~$E^{(1)}$ in the denominator
is shorthand for the right action of the group of translations.)
\par\noindent(d)\enspace
It is a standard fact that~$H^1_\et$ of a curve and its Jacobian are
isomorphic. This gives the first isomorphism, and the second follows
from~(c) and the fact that an isogeny between abelian varieties
induces an isomorphism of their \'etale cohomologies.
\end{proof}

\begin{proposition}
\label{proposition:GKzformula}
Let $\gK$ be a prime ideal in~$\Ocal_K$ such
that~$\bfmu_6\subset\Ocal_K/\gK$, i.e.,
$\Norm_{K/\QQ}(\gK)\equiv1\pmod6$.  Let $\z\in\bfmu_6$ and
$\xi\in\bfmu_3$, choose elements~$\g,\d\in\Ocal_K$ satisfying
$\SS{\g}{\gK}=\z$ and $\CS{\d}{\gK}=\xi$, and let~$C_6^{(\d,\g)}$ be
the smooth projective curve from
Proposition~$\ref{proposition:ECDcurves}$ given by the affine equation
\[
  C_6^{(\g,\d)} : \g z^6(1-\g z^6) = \d x^3.
\]
Then
\[
  \#M_\gK^{[1]}(\z,\xi) = \frac{1}{18}\left(
    \#C_6^{(\g,\d)}\left(\frac{\Ocal_K}{\gK}\right) - e(\z,\xi) 
  \right),
\]
where the error term $e(\z,\xi)$ is given by the formula
\[
  e(\z,\xi) = 
  \left[\begin{tabular}{@{}cl@{}}
     $6$&if $\z=1$\\
     $0$&if $\z\ne1$\\
     \end{tabular}\right]
  +\left[\begin{tabular}{@{}cl@{}}
     $3$&if $\z^2=\xi$\\
     $0$&if $\z^2\ne\xi$\\
     \end{tabular}\right]
  +\left[\begin{tabular}{@{}cl@{}}
     $3$&if $\z^4=\xi$\\
     $0$&if $\z^4\ne\xi$\\
     \end{tabular}\right].
\]
\end{proposition}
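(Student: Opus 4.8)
The plan is to compute $\#C_6^{(\g,\d)}(\Ocal_K/\gK)$ by fibering the curve over its base elliptic curve and matching the generic fibers with the set $M_\gK^{[1]}(\z,\xi)$. Write $N=\Norm_{K/\QQ}(\gK)$, so the residue field $\Ocal_K/\gK$ has $N$ elements and, by hypothesis, contains $\bfmu_6$. I would introduce the coordinate $\l=\g z^6$, so that the affine equation $\g z^6(1-\g z^6)=\d x^3$ becomes $\l(1-\l)=\d x^3$. This exhibits a finite degree-$6$ morphism $\phi\colon C_6^{(\g,\d)}\to C_1^{(\g,\d)}$ onto the smooth projective model of $\l(1-\l)=\d x^3$, given on affine points by $(x,z)\mapsto(x,\g z^6)$. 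The curve $C_1^{(\g,\d)}$ is a smooth plane cubic with affine points $(x,\l)$ together with a single point at infinity~$O$, and $\phi$ is the twisted analogue of the map $\psi$ in the proof of Proposition~\ref{proposition:ECDcurves}(a), ramified only over $\l=0$ and over $O$, where each fiber consists of three points of ramification index~$2$. Since $\phi$ is a finite morphism of smooth curves, every point of $C_6^{(\g,\d)}$ lies over a unique point of the base, so I would count $\#C_6^{(\g,\d)}(\Ocal_K/\gK)$ fiber-by-fiber.

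For the generic fibers, consider $(x,\l)\in C_1^{(\g,\d)}$ with $\l\neq0,1$. The fiber consists of the $z$ with $z^6=\l/\g$; since $\bfmu_6\subset\Ocal_K/\gK$, this equation has either six solutions, all rational, or none, according as $\SS{\l}{\gK}=\SS{\g}{\gK}=\z$ or not. Dually, such a base point exists, with three choices of $x$, precisely when $x^3=\l(1-\l)/\d$ is solvable, i.e.\ when $\CS{\l(1-\l)}{\gK}=\CS{\d}{\gK}=\xi$. Combining the two conditions, a value $\l$ with $\l$ and $1-\l$ units contributes exactly $6\cdot 3=18$ points when $\SS{\l}{\gK}=\z$ and $\CS{\l(1-\l)}{\gK}=\xi$, and none otherwise. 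These are exactly the defining conditions for $\l\in M_\gK^{[1]}(\z,\xi)$, so the generic fibers contribute $18\,\#M_\gK^{[1]}(\z,\xi)$.

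It remains to account for the three exceptional base points $\l=1$, $\l=0$, and $O$, which produce the error term $e(\z,\xi)$. The point $\l=1$ is unramified, and its fiber consists of the $z$ with $\g z^6=1$; as above this gives six rational points when $\SS{\g}{\gK}=1$, i.e.\ when $\z=1$, and none otherwise. The points $\l=0$ and $O$ are the two ramified fibers, each consisting of three points of the smooth model, whose rationality I would determine by a local branch analysis. Near the origin $\g z^6\approx\d x^3$, so the three branches are distinguished by the limiting value of $x/z^2$, a cube root of $\g/\d$; these three points are all rational when $\g/\d$ is a cube, and since $\bfmu_3\subset\Ocal_K/\gK$ forces the three cube roots to form a single Galois orbit, none are rational otherwise. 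Using $\CS{\g}{\gK}=\SS{\g}{\gK}^2=\z^2$ and $\CS{\d}{\gK}=\xi$, the cube condition $\CS{\g/\d}{\gK}=1$ becomes $\z^2=\xi$, contributing $3$ exactly when $\z^2=\xi$. The analysis at $O$ is identical after passing to the leading balance $-\g^2 z^{12}\approx\d x^3$ as $z\to\infty$: the branches are cube roots of $-\g^2/\d$, all rational iff $\CS{-\g^2/\d}{\gK}=1$, which using $\CS{-1}{\gK}=1$ (as $-1$ is a cube) and $\CS{\g}{\gK}=\z^2$ becomes $\z^4=\xi$, contributing $3$ exactly when $\z^4=\xi$. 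Summing the three exceptional contributions yields precisely $e(\z,\xi)$, so $\#C_6^{(\g,\d)}(\Ocal_K/\gK)=18\,\#M_\gK^{[1]}(\z,\xi)+e(\z,\xi)$, which rearranges to the claimed formula.

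I expect the main obstacle to be the local analysis of the two ramified fibers: correctly identifying the three branches of the smooth model over the singular point $(0,0)$ and over infinity, verifying that their rationality is governed respectively by whether $\g/\d$ and $-\g^2/\d$ are cubes, and tracking the sign in the second case so as to produce the clean conditions $\z^2=\xi$ and $\z^4=\xi$ via $\CS{-1}{\gK}=1$. The generic-fiber count, by contrast, reduces immediately to the all-or-nothing solvability of $z^6=\l/\g$ and $x^3=\l(1-\l)/\d$, which follows at once from $\bfmu_6\subset\Ocal_K/\gK$ and the definition of the residue symbols.
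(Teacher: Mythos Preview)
Your proposal is correct and follows essentially the same strategy as the paper: parametrize the affine points of $C_6^{(\g,\d)}$ by $\l=\g z^6$, observe that for $\l\notin\{0,1\}$ the conditions $\SS{\l}{\gK}=\z$ and $\CS{\l(1-\l)}{\gK}=\xi$ characterize $M_\gK^{[1]}(\z,\xi)$ and give exactly $18$ points per admissible $\l$, and then separately account for the points with $\l\in\{0,1,\infty\}$ to obtain the correction $e(\z,\xi)$.

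The only notable difference is in the treatment of the exceptional fibers. Where you invoke a leading-balance branch analysis (reading off cube roots of $\g/\d$ at the origin and of $-\g^2/\d$ at infinity), the paper instead passes to the explicit birational model $C':\g(1-\g z^6)=\d x^3$ via $(x,z)\mapsto(xz^{-2},z)$, where the three branches over $(0,0)$ become the smooth affine points with $x^3=\g\d^{-1}$, and then blows up the point at infinity with $y=z^2u$ to find the three branches satisfying $u^3=-\g^2\d^{-1}$. Your informal branch argument recovers exactly these equations, so the two computations agree; the paper's version simply makes the desingularization explicit rather than appealing to the limiting ratios $x/z^2$ and $x/z^4$.
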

\begin{proof}
Our choice of~$\g$ and~$\d$ imply that for any~$\l\in\Ocal_K$,
\begin{align*}
  \SS{\l}{\gK} = \z 
  &\quad\Longleftrightarrow\quad
  \g^{-1}\l \equiv \text{non-zero sixth power}\pmod\gK, \\  
  \CS{\l(1-\l)}{\gK} = \xi
  &\quad\Longleftrightarrow\quad
  \d^{-1}\l(1-\l) \equiv \text{non-zero cube}\pmod\gK.
\end{align*}
We thus get a natural map
\begin{equation}
  \label{eqn:C6toMkzxi}
  \begin{array}{rcl}
    \left\{(x,z) \in C_6^{(\g,\d)}\left(\dfrac{\Ocal_K}{\gK}\right)
       : x \ne 0,\infty \right\}
    &\longrightarrow&
    M_\gK(\z,\xi), \\
    (x,z) &\longmapsto& \g z^6.
  \end{array}
\end{equation}
We claim that the map~\eqref{eqn:C6toMkzxi} is exactly $18$-to-$1$.
To see this, let $\l\in M_\gK(\z,\xi)$. Then $\l\equiv\g
v^6\pmod{\gK}$ and $\l(1-\l)\equiv\d u^3\pmod{\gK}$ for some
$u,v\in(\Ocal_K/\gK)^*$, so~$\l$ is the image of the point $(u,v)\in
C_6^{(\g,\d)}(\Ocal_K/\gK)$. Further, for a given value of~$\l$, there
are six choices for~$v$ and three choices for~$u$. (Note that
$\Ocal_K/\gK$ contains~$\bfmu_6$.) Hence
\[
  \#M_\gK(\z,\xi) = \frac{1}{18}
    \#\left(C_6^{(\g,\d)}\left(\frac{\Ocal_K}{\gK}\right)\setminus
       \{x=0~\text{or}~\infty\}\right).
\]
It remains to count the number of $\Ocal_K/\gK$-rational points
with $x=0$ or~$\infty$ on a smooth model of~$C_6^{(\g,\d)}$.
\par
To ease notation, we let $C=C_6^{(\g,\d)}$, and we let~$C'$ be the curve
\begin{equation}
  \label{eqn:Cprmg1gz6}
  C' : \g(1-\g z^6) = \d x^3.
\end{equation}
The birational map
\[
  C \longrightarrow C',\qquad
  (x,z) \longmapsto (xz^{-2},z),
\]
is a bijection on the set of points
\[
  C \setminus \{x=0~\text{or}~\infty\}
  \quad\stackrel\sim\longleftrightarrow\quad
  C' \setminus \{x=0~\text{or}~\infty\} \cup \{z=0\},
\]
and the affine piece of~$C'$ defined by equation~\eqref{eqn:Cprmg1gz6}
is smooth, so the points with~$x=0$ on~$C$ become the points
with~$x=0$ or~$z=0$ on~$C'$. (More precisely, we will see that the
singular point $(0,0)\in C$ is blown up to three points on~$C'$, while
there are six smooth points of the form $(0,\g^{-1/6})$ on both~$C$
and~$C'$.)  The points on~$C'$ with~$x=0$ or $z=0$ are characterized
by
\[
  (0,z)\in C' \Longleftrightarrow z^6=\g^{-1}
  \qquad\text{and}\qquad
  (x,0)\in C' \Longleftrightarrow x^3=\g\d^{-1}.
\]
Thus there are points of the form~$(0,z)$ if and only
if~$\SS{\g}{\gK}=1$, and there are points of the form~$(0,x)$ if and
only if~$\CS{\g\d^{-1}}{\gK}=1$.  Using the values $\SS{\g}{\gK}=\z$
and
$\CS{\g\d^{-1}}{\gK}=\SS{\g}{\gK}^2\CS{\d^{-1}}{\gK}=\z^2\xi^{-1}$, we
find that
\begin{align*}
 \#\left\{(0,z)\in C_6^{(\g,\d)}\left(\frac{\Ocal_K}{\gK}\right) \right\}
  &=\begin{cases}
      6&\text{if $\z=1$,}\\
      0&\text{if $\z\ne1$,}\\
  \end{cases} \\
 \#\left\{(x,0)\in C_6^{(\g,\d)}\left(\frac{\Ocal_K}{\gK}\right) \right\}
  &=\begin{cases}
      3&\text{if $\z^2=\xi$,}\\
      0&\text{if $\z^2\ne\xi$.}\\
  \end{cases} 
\end{align*}
\par
It remains to count the points at infinity on~$C'$. Homogenizing the
equation for~$C'$ gives the curve
$\g y^6 - \g^2 z^6=\d x^3y^3$. The unique (singular) point
at infinity is $[x,y,z]=[1,0,0]$, so dehomogenizing by setting $x=1$ gives
the curve
\[
  \g y^6 - \g^2 z^6 = \d y^3.
\]
We blow up the singular point $(0,0)$ by setting $y=z^2u$. (This
corresponds to blowing up twice. One can check that the other
coordinate charts do not yield any additional points.)  The resulting
curve has affine equation
\[
  \g z^6u^6 - \g^2 = \d u^3.
\]
This affine curve is smooth, and the points that map to the point
at infinity on~$C'$ are the points with $z=0$ and $u^3=-\g^2\d^{-1}$.
Using $\CS{\g^2\d^{-1}}{\gK}=\SS{\g}{\gK}^4\CS{\d^{-1}}{\gK}$, we
see that
\[
 \#\left\{\text{points at infinity on }
    C_6^{(\g,\d)}\left(\frac{\Ocal_K}{\gK}\right) \right\}
  =\begin{cases}
      3&\text{if $\z^4=\xi$,}\\
      0&\text{if $\z^4\ne\xi$.}\\
  \end{cases} \\
\]  
This completes the proof of the proposition.
\end{proof}

The next step is to count the number of points on~$C_6^{(\g,\d)}$ defined
over a finite field. This is done using the decomposition of~$J_6^{(\g,\d)}$
into a product of elliptic curves.

\begin{proposition}
\label{proposition:numC6pts}
With notation as in the statement of
Proposition~$\ref{proposition:GKzformula}$,
choose an element~$\pi\in\Ocal_K$ satisfying $\gK=\pi\Ocal_K$
and $\pi\equiv2\pmod3$. Further let $\e=\CS{2}{\gK}$. Then
\begin{multline*}
  \#C_6^{(\g,\d)}\left(\frac{\Ocal_K}{\gK}\right)
  = \Norm_{K/\QQ}\gK + 1 
  + \Trace_{K/\QQ}(\xi\bar\pi)
  + \Trace_{K/\QQ}(\e^2\z^3\xi^2\bar\pi) \\
  + \Trace_{K/\QQ}(\e\z^5\xi\bar\pi)
  + (-1)^{\frac12(\Norm_{K/\QQ}\gK-1)}\Trace_{K/\QQ}(\e\z\xi\bar\pi).
\end{multline*}
If $\gK$ is an inert prime, say $\gK=k\Ocal_K$ with $k\in\ZZ$
satisfying $k\equiv2\pmod3$, and if we take $\d=1$,
then the formula simplifies to
\[
  \#C_6^{(\g,1)}\left(\frac{\Ocal_K}{\gK}\right)
  = \begin{cases}
    k^2 + 1 + 8k &\text{if $\z=1$,} \\
    k^2 + 1 - 4k &\text{if $\z=-1$,} \\
    k^2 + 1 + 2k &\text{if $\z\ne\pm1$.} \\
  \end{cases}
\]
\end{proposition}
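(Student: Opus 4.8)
The plan is to combine the isogeny decomposition of $\Jac(C_6^{(\g,\d)})$ from Proposition~\ref{proposition:ECDcurves} with the Gr\"ossen\-charac\-ter formula for the number of points on a $j=0$ curve, and then to evaluate the four resulting sextic residue symbols explicitly. Set $N=\Norm_{K/\QQ}\gK$. First I would apply the Lefschetz trace formula to the smooth projective curve $C_6^{(\g,\d)}$ over the finite field $\Ocal_K/\gK$, which gives
\[
  \#C_6^{(\g,\d)}\left(\frac{\Ocal_K}{\gK}\right)
  = N + 1 - \Trace\bigl(\Frob\mid H^1_\et(C_6^{(\g,\d)},\QQ_\ell)\bigr).
\]
By Proposition~\ref{proposition:ECDcurves}(d) the module $H^1_\et(C_6^{(\g,\d)},\QQ_\ell)$ is isomorphic, as a Galois module, to the direct sum of the $H^1_\et$ of the four curves $E^{(16\d^2)}$, $E^{(4\g^3\d^4)}$, $E^{(\g^5\d^2)}$, $E^{(-\g\d^2)}$. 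Writing $\k_1=16\d^2$, $\k_2=4\g^3\d^4$, $\k_3=\g^5\d^2$, $\k_4=-\g\d^2$, the Frobenius trace on $H^1$ is therefore the sum of the four elliptic Frobenius traces $a_\gK(E^{(\k_i)})$, so the count reduces to $\#C_6^{(\g,\d)}=N+1-\sum_{i=1}^4 a_\gK(E^{(\k_i)})$.

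Second, I would invoke the explicit Gr\"ossen\-charac\-ter formula already used in the proof of Theorem~\ref{theorem:j0charctr}(c): for the curve $E^{(\k)}:y^2=x^3+\k$ and the primary generator $\pi\equiv2\pmod{3\Ocal_K}$ of $\gK$, one has $\psi_{E^{(\k)}}(\gK)=-\SS{4\k}{\gK}^{-1}\pi$, whence $a_\gK(E^{(\k)})=\Trace_{K/\QQ}\bigl(\psi_{E^{(\k)}}(\gK)\bigr)=-\Trace_{K/\QQ}\bigl(\SS{4\k}{\gK}^{-1}\pi\bigr)$. Since the trace is rational and the residue symbol is a sixth root of unity, applying complex conjugation gives $\Trace_{K/\QQ}\bigl(\SS{4\k}{\gK}^{-1}\pi\bigr)=\Trace_{K/\QQ}\bigl(\SS{4\k}{\gK}\bar\pi\bigr)$, because conjugation inverts both $\pi$ and the root of unity. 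Substituting into the previous step yields
\[
  \#C_6^{(\g,\d)}\left(\frac{\Ocal_K}{\gK}\right)
  = N + 1 + \sum_{i=1}^4 \Trace_{K/\QQ}\left(\SS{4\k_i}{\gK}\bar\pi\right).
\]

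Third, I would compute the four symbols by multiplicativity of $\SS{\cdot}{\gK}$ together with the normalizations $\SS{\g}{\gK}=\z$, $\CS{\d}{\gK}=\SS{\d}{\gK}^2=\xi$, and $\e=\CS{2}{\gK}=\SS{2}{\gK}^2$. Using $4\k_1=2^6\d^2$, $4\k_2=2^4\g^3\d^4$, $4\k_3=2^2\g^5\d^2$, and $4\k_4=-2^2\g\d^2$, one finds $\SS{4\k_1}{\gK}=\xi$, $\SS{4\k_2}{\gK}=\e^2\z^3\xi^2$, $\SS{4\k_3}{\gK}=\e\z^5\xi$, and $\SS{4\k_4}{\gK}=\SS{-1}{\gK}\,\e\z\xi$. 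Euler's congruence gives $\SS{-1}{\gK}=(-1)^{(N-1)/6}$, and since $(N-1)/2=3\cdot(N-1)/6$ this equals $(-1)^{(N-1)/2}$, producing exactly the four displayed terms.

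For the inert simplification I would take $\gK=k\Ocal_K$ with $k\equiv2\pmod3$, so $\pi=k$, $\bar\pi=k$, and $\Trace_{K/\QQ}(\z'\bar\pi)=k\,\Trace_{K/\QQ}(\z')$ for any root of unity $\z'$; choosing $\d=1$ forces $\xi=1$. Two observations then finish the computation: $(-1)^{(k^2-1)/2}=1$ since $k$ is odd, and $\e=\CS{2}{\gK}=1$ because every element of $\FF_k^*$ is a cube in $\FF_{k^2}^*\cong(\Ocal_K/\gK)^*$ when $k\equiv2\pmod3$ (for $a\in\FF_k^*$ one has $a^{(k^2-1)/3}=(a^{k-1})^{(k+1)/3}=1$, as $3\mid k+1$). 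After these substitutions the four terms become $k$ times $\Trace_{K/\QQ}$ of $1,\z^3,\z^5,\z$; evaluating the traces of sixth roots of unity ($\Trace 1=2$, $\Trace(-1)=-2$, trace of a primitive sixth root $=1$, trace of a primitive cube root $=-1$) and splitting into the cases $\z=1$, $\z=-1$, and $\z\ne\pm1$ gives the three claimed values $k^2+1+8k$, $k^2+1-4k$, and $k^2+1+2k$. The main obstacle I anticipate is purely bookkeeping: pinning down the sign conventions in the Gr\"ossen\-charac\-ter formula and the conjugation step, and reducing $\SS{-1}{\gK}$ to the stated power of $-1$. The cohomological input is essentially formal given Proposition~\ref{proposition:ECDcurves}, and the only genuinely new computation is the vanishing $\e=1$ in the inert case.
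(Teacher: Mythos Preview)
Your proposal is correct and follows essentially the same route as the paper: Lefschetz trace formula, the $H^1$ decomposition from Proposition~\ref{proposition:ECDcurves}(d), the explicit Gr\"ossen\-charac\-ter formula $\psi_{E^{(\k)}}(\gK)=-\SS{4\k}{\gK}^{-1}\pi$, and then evaluation of the four symbols $\SS{4\k_i}{\gK}$ in terms of~$\z$, $\xi$, and~$\e$. The only notable deviation is in the inert case: the paper obtains $\e=\CS{2}{\gK}=1$ via cubic reciprocity ($\CS{2}{k}=\CS{k}{2}=1$ since both $2$ and $k$ are primary), whereas you give the more elementary direct argument that every element of~$\FF_k^*$ is a cube in~$\FF_{k^2}^*$ when $k\equiv2\pmod3$; both are valid and equally short.
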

\begin{proof}
To ease notation, let $\FF_\gK=\Ocal_K/\gK$, so
$\Norm_{K/\QQ}\gK=\#\FF_\gK$.  Further let $F_\gK$ be the
$(\Norm_{K/\QQ}\gK)^{\text{th}}$-power Frobenius map on~$\bar\FF_\gK$.
Then the number of points in $C_6^{(\g,\d)}(\FF_\gK)$ is given by the
trace formula~\cite[C.4.2]{Hartshorne},
\begin{equation}
  \label{eqn:DfFk2k21x}
  \#C_6^{(\g,\d)}(\FF_\gK)
  = \Norm_{K/\QQ}\gK + 1 - \Trace\Bigl(F_\gK
       \bigm| H^1_\et({C_{6/\FF_\gK}^{(\g,\d)}},\QQ_\ell)\Bigr).
\end{equation}
We  compute the trace using
Proposition~\ref{proposition:ECDcurves}, which splits the
representation for~$C_6^{(\g,\d)}$ into a product of representations on
elliptic curves with zero $j$-invariant. Thus
\begin{multline}
  \label{eqn:trH1D1E16E4x}
  \Trace\Bigl(F_\gK \bigm| H^1_\et({C_{6/\FF_\gK}^{(\g,\d)}},\QQ_\ell)\Bigr)\\
  =
  \Trace\Bigl(F_\gK \bigm| H^1_\et({E_{/\FF_\gK}^{(16\d^2)}},\QQ_\ell)\Bigr)
  + \Trace\Bigl(F_\gK \bigm| H^1_\et({E_{/\FF_\gK}^{(4\g^3\d^4)}},\QQ_\ell)\Bigr) \\
  + \Trace\Bigl(F_\gK \bigm| H^1_\et({E_{/\FF_\gK}^{(\g^5\d^2)}},\QQ_\ell)\Bigr)
  + \Trace\Bigl(F_\gK \bigm| H^1_\et({E_{/\FF_\gK}^{(-\g\d^2)}},\QQ_\ell)\Bigr).
\end{multline}
\par
We now apply~\cite[Chapter~18, Section~3, Theorem~4]{IrelandRosen},
which gives a formula for the trace in terms of residue symbols.
Writing $\gK=\pi\Ocal_K$ with $\pi\equiv2\pmod3$, we find that
\begin{align}
  \label{eqn:GK3x}
  \Trace&\Bigl(F_\gK \bigm| H^1_\et({C_{6/\FF_\gK}^{(\g,\d)}},\QQ_\ell)\Bigr)\notag\\*
  &=  -\SS{2^6\d^2}{\gK}^{-1}\pi - \SS{2^6\d^2}{\gK}\bar\pi
    - \SS{2^4\g^3\d^4}{\gK}^{-1}\pi - \SS{2^4\g^3\d^4}{\gK}\bar\pi \notag\\*
  &\quad{}  - \SS{2^2\g^5\d^2}{\gK}^{-1}\pi - \SS{2^2\g^5\d^2}{\gK}\bar\pi 
    - \SS{-2^2\g\d^2}{\gK}^{-1}\pi - \SS{-2^2\g\d^2}{\gK}\bar\pi \notag\\
  &= -\xi^{-1}\pi - \xi\bar\pi
   - \CS{2}{\gK}^{-2}\z^{-3}\xi^{-2}\pi - \CS{2}{\gK}^{2}\z^{3}\xi^{2}\bar\pi
       \notag\\*
  &\quad{} - \CS{2}{\gK}^{-1}\z^{-5}\xi^{-1}\pi - \CS{2}{\gK}\z^5\xi\bar\pi
       \notag\\*
  &\quad{} - \LS{-1}{\gK}_2\CS{2}{\gK}^{-1}\z^{-1}\xi^{-1}\pi
       - \LS{-1}{\gK}_2\CS{2}{\gK}\z\xi\bar\pi.
\end{align}
Noting that $\LS{-1}{\gK}_2=(-1)^{(\Norm_{K/\QQ}\gK-1)/2}$, we
combine~\eqref{eqn:DfFk2k21x}
and~\eqref{eqn:GK3x} to obtain the desired result.
\par
In the case that $\gK=k\Ocal_K$ is an inert prime, we have
$(-1)^{\frac12(k^2-1)}=1$ since~$k$ is odd.  Further, both~$2$ and~$k$
are primary, so cubic reciprocity gives $\CS{2}{\gK} = \CS{2}{k} =
\CS{k}{2} = 1$. Further taking~$\d=1$ implies that~$\xi=1$, so the
formula for~$\#C_6^{(\g,1)}(\Ocal_K/\gK)$ becomes
\[
  k^2 + 1 
  + \bigl(  \Trace_{K/\QQ}(1)
  + \Trace_{K/\QQ}(\z^3) 
  + \Trace_{K/\QQ}(\z^5)
  + \Trace_{K/\QQ}(\z) \bigr)k.
\]
Taking the six possible values $\z\in\bfmu_6$  yields the stated formula.
\end{proof}

\begin{proposition}
\label{propostion:Mk1Stable}
Let $k\ge5$ be a rational prime.  The following table gives the values
of~$\#M_k^{[1]}(S,1)$ for various subsets $S\subset\bfmu_6$, divided into
cases according to whether~$k$ is split or inert
in~$K=\QQ(\sqrt{-3}\,)$, cf.\ Proposition~$\ref{propostion:MkStable}$.
\begin{center}
\renewcommand{\arraystretch}{1.25}
\begin{tabular}{|c|c||c|c|} \cline{3-4}
  \multicolumn{2}{c||}{}
  &$k\equiv1\pmod3$&$k\equiv2\pmod3$\\ \hline\hline
  \textup{(a)} & $\#M_{k}^{[1]}\bigl(\{\o,\o^5\},1\bigr)$
     & $\frac19(k-1)^2$ & $\frac19(k+1)^2$ \\ \hline
  \textup{(b)} & $\#M_{k}^{[1]}\bigl(\{\o,\o^3,\o^5\},1\bigr)$
     & $\frac16(k-1)(k-3)$ & $\frac16(k^2-1)$ \\ \hline
  \textup{(c)} & $\#M_{k}^{[1]}\bigl(\{\o,\o^2,\o^4,\o^5\},1\bigr)$
     & $\frac{2}{9}(k-1)^2$ & $\frac29(k+1)^2$ \\ \hline
  \textup{(d)} & $\#M_{k}^{[1]}(\bfmu_6,1)$
     & $\frac13(k^2-2k+4)$ & $\frac13(k^2+2k-2)$ \\ \hline
\end{tabular}
\end{center}
\end{proposition}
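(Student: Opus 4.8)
The plan is to read off each entry by summing the point-count formula of Proposition~\ref{proposition:GKzformula} over the relevant roots of unity. For a set $S\subset\bfmu_6$ we have $M_k^{[1]}(S,1)=\bigcup_{\z\in S}M_k^{[1]}(\z,1)$, a disjoint union, so $\#M_k^{[1]}(S,1)=\sum_{\z\in S}\#M_k^{[1]}(\z,1)$, and Proposition~\ref{proposition:GKzformula} rewrites each term as $\frac1{18}\bigl(\#C_6^{(\g,\d)}(\Ocal_K/\gK)-e(\z,1)\bigr)$. The two columns correspond to whether $k$ is inert or split, which dictates which form of the point count from Proposition~\ref{proposition:numC6pts} is available.

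First I would dispose of the inert case $k\equiv2\pmod3$, where $\gK=k\Ocal_K$, $\Norm_{K/\QQ}\gK=k^2$, and one may take $\d=1$ so that the three-case simplification of Proposition~\ref{proposition:numC6pts} applies. Evaluating $e(\z,1)$ gives $12$ for $\z=1$, $6$ for $\z=-1$, and $0$ for the four primitive cube and sixth roots, so
\[
  \#M_k^{[1]}(\z,1)=
  \begin{cases}
    \tfrac1{18}(k^2+8k-11)&\z=1,\\
    \tfrac1{18}(k+1)(k-5)&\z=-1,\\
    \tfrac1{18}(k+1)^2&\z\in\{\o,\o^2,\o^4,\o^5\}.
  \end{cases}
\]
Summing over the four sets~$S$ then gives the inert column after a one-line simplification; for instance $S=\{\o,\o^5\}$ yields $\frac19(k+1)^2$ and $S=\bfmu_6$ yields $\frac13(k^2+2k-2)$. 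This half of the proof is routine.

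The substantive case is the split one, $k\equiv1\pmod3$, where $k\Ocal_K=\gK\bar\gK$ and $M_k^{[1]}$ is a subset of $\Ocal_K/k\Ocal_K\cong\FF_k\times\FF_k$, so Proposition~\ref{proposition:GKzformula} does not apply to $k\Ocal_K$ directly. Here I would use the multiplicativity $\SS{\l}{k}=\SS{\l}{\gK}\SS{\l}{\bar\gK}$ and $\CS{\l(1-\l)}{k}=\CS{\l(1-\l)}{\gK}\CS{\l(1-\l)}{\bar\gK}$ together with the CRT isomorphism to factor the defining conditions into independent conditions at $\gK$ and $\bar\gK$, giving
\[
  \#M_k^{[1]}(\z,1)=\sum_{\z_1\z_2=\z}\ \sum_{\xi\in\bfmu_3}
     \#M_\gK^{[1]}(\z_1,\xi)\,\#M_{\bar\gK}^{[1]}(\z_2,\xi^{-1}).
\]
Each prime-level count is supplied by Propositions~\ref{proposition:GKzformula} and~\ref{proposition:numC6pts} with $\gK=\pi\Ocal_K$ (and with $\pi$ replaced by $\bar\pi$ for the conjugate prime), so each summand expands into a main term $k+1$ plus four traces $\Trace_{K/\QQ}(\,\cdot\,\bar\pi)$, together with the factors $\e=\CS{2}{\gK}$ and $(-1)^{(k-1)/2}$.

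I expect the genuine obstacle to be the cancellation in this split sum. The cleanest route is to collapse the inner structure first: summing a prime-level count over all $\z_1\in\bfmu_6$ kills every trace but one, since $\sum_{\z\in\bfmu_6}\z^j=0$ unless $6\mid j$, leaving $\sum_{\z_1}\#M_\gK^{[1]}(\z_1,\xi)=\frac13\bigl(k-2+\Trace_{K/\QQ}(\xi\bar\pi)\bigr)$. For $S=\bfmu_6$ this immediately gives $\frac19\sum_{\xi\in\bfmu_3}\bigl(k-2+\Trace_{K/\QQ}(\xi\bar\pi)\bigr)\bigl(k-2+\Trace_{K/\QQ}(\xi^{-1}\pi)\bigr)=\frac13(k^2-2k+4)$, using $\pi\bar\pi=k$ and $\sum_{\xi\in\bfmu_3}\xi=0$ to reduce the trace products to $6k$. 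The harder sets $S\ne\bfmu_6$ require tracking the individual $\#M_\gK^{[1]}(\z_1,\xi)$ before summing over the restricted factorizations $\z_1\z_2\in S$, and here one must check that the $\e$ and $(-1)^{(k-1)/2}$ contributions cancel entirely — they must, since the claimed answers $\frac19(k-1)^2$, $\frac16(k-1)(k-3)$, and $\frac29(k-1)^2$ are independent both of the choice of~$\pi$ and of $k\bmod4$. Organizing the computation by quadratic class ($\z^3$) and cubic class ($\z^2$) separately, and repeatedly applying orthogonality of the characters of $\bfmu_6$ and $\bfmu_3$, should make these cancellations transparent and produce the remaining split-column entries.
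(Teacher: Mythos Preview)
Your proposal is correct and follows the same architecture as the paper's proof: the inert case is handled exactly as the paper does it, and in the split case you use the same CRT factorization
\[
  \#M_k^{[1]}(\z,1)=\sum_{\z_1\z_2=\z}\ \sum_{\xi\in\bfmu_3}
     \#M_\gK^{[1]}(\z_1,\xi)\,\#M_{\bar\gK}^{[1]}(\z_2,\xi^{-1})
\]
and then feed in Propositions~\ref{proposition:GKzformula} and~\ref{proposition:numC6pts}.

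The one difference worth noting is in the execution of the split case. The paper does not attempt the cancellation by hand at all: it writes out the double sum \eqref{eqn:splitct1}--\eqref{eqn:splitct4} and evaluates it symbolically in PARI, treating~$k$,~$\pi$, and~$\e$ as indeterminates (see Remark~\ref{remark:Mksplitcomputation} and Appendix~\ref{appendix:pariprogram}). In particular the paper explicitly says it has \emph{no a priori explanation} for why the answer is independent of~$k\bmod4$ and of~$\e=\CS{2}{\gK}$. Your computation for $S=\bfmu_6$---summing $\#M_\gK^{[1]}(\z_1,\xi)$ over $\z_1\in\bfmu_6$ first to collapse three of the four trace terms via $\sum_{\z\in\bfmu_6}\z^j=0$, leaving $\frac13\bigl(k-2+\Trace_{K/\QQ}(\xi\bar\pi)\bigr)$---is genuinely cleaner than what the paper does for that set, and if the character-orthogonality organization you sketch for the remaining three sets~$S$ really does make the $\e$ and $(-1)^{(k-1)/2}$ cancellations transparent, that would supply precisely the conceptual explanation the paper says it lacks. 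You should be aware, though, that the paper's remark about the delicacy of this cancellation suggests the three harder cases may not unwind as cleanly as $S=\bfmu_6$; the paper simply trusts the machine there.
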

\begin{proof}
We begin with the case that $k\equiv2\pmod3$, so $\gK=k\Ocal_K$ is a
prime ideal with $\Norm_{K/\QQ}\gK=k^2$.  We let
$\o=\frac12(1+\sqrt{-3})$ be the usual sixth root of unity, and we
choose some~$\g\in\Ocal_K$ satisfying
\[
  \SS{\g}{\gK} = \o.
\]
Then for any $0\le i\le 5$ we have
\begin{align*}
  18 \#M_k^{[1]}(\o^i,1) 
  &= \#C_6^{(\g^i,1)}(\FF_\gK) 
       -\left[\begin{tabular}{@{}rl@{}} 
          12& if $i=0$\\
          6& if $i=3$\\
          0& otherwise\\
         \end{tabular}\right]
      \\
  &\omit\hfill from Proposition \ref{proposition:GKzformula}
     with $\z=\o^i$ and $\xi=1$, \\
  &=  \left[\begin{tabular}{@{}rl@{}} 
         $k^2 + 1 + 8k$ &\text{if $i=0$} \\
         $k^2 + 1 - 4k$ &\text{if $i=3$} \\
         $k^2 + 1 + 2k$ &\text{otherwise} \\
       \end{tabular}\right]
       -\left[\begin{tabular}{@{}rl@{}} 
          12& if $i=0$\\
          6& if $i=3$\\
          0& otherwise\\
         \end{tabular}\right]
      \hspace{2em}\\
    &\omit\hfill from Proposition \ref{proposition:numC6pts}
     with $\z=\o^i$ and $\xi=1$, \\
  &= \begin{cases}
     k^2 + 8k - 11 &\text{if $i=0$,} \\
     k^2 - 4k - 5  &\text{if $i=3$,} \\
     k^2 + 2k + 1  &\text{otherwise.}
    \end{cases}
\end{align*}
It is now easy to compute~$\#M_k^{[1]}(S,1)=\sum_{\z\in S}\#M_k^{[1]}(\z,1)$
for the four cases of the proposition. For example,
\begin{align*}
  \#M_{k}^{[1]}(\bfmu_6,1)
  &= \frac{1}{18}\bigl((k^2+8k-11)+(k^2-4k-5) + 4(k^2+2k+1)\bigr)\\*
    &   = \frac{1}{3}(k^2+2k-2).
\end{align*}
\par
Next we consider the case that $k\equiv1\pmod3$,
so~$k\Ocal_K=\gK\bar\gK$ splits.  The definition of the residue symbol
says that
\begin{align*}
  \SS{\l}{k\Ocal_K}&=\SS{\l}{\gK}\SS{\l}{\bar\gK},\\
  \CS{\l(1-\l)}{k\Ocal_K}&=\CS{\l(1-\l)}{\gK}\CS{\l(1-\l)}{\bar\gK},
\end{align*}
so using the Chinese remainder theorem
\[
  \Ocal_K/k\Ocal_K = \Ocal_K/\gK\Ocal_K \times \Ocal_K/\bar\gK\Ocal_K,
\]
a quantity such as $M_k^{[1]}(\z,\xi)$  breaks up into a sum of products,
\[
  M_k^{[1]}(\z,\xi) = \sum_{u=0}^5 \sum_{v=0}^2 
          M_\gK^{[1]}(\o^u,\o^{2v})M_{\bar\gK}^{[1]}(\z\o^{-u},\xi\o^{-2v}).
\]
Hence for $0\le i\le 5$ we have
\begin{align}
  \label{eqn:splitct1}
  M_k^{[1]}(\o^i,1) &= \sum_{u=0}^5 \sum_{v=0}^2 
          M_\gK^{[1]}(\o^u,\o^{2v})M_{\bar\gK}^{[1]}(\o^{i-u},\o^{-2v}) \notag\\
  &= \sum_{u=0}^5 \sum_{v=0}^2 
          M_\gK^{[1]}(\o^u,\o^{2v})M_{\gK}^{[1]}(\o^{u-i},\o^{2v}).
\end{align}
(For the second equality we've used the identity
$M_{\gK}^{[1]}(\z,\xi)=M_{\bar\gK}^{[1]}(\bar\z,\bar\xi)$.)  We
choose~$\g$ and~$\d$ to satisfy
\[
  \SS{\g}{\gK} = \o
  \qquad\text{and}\qquad
  \CS{\d}{\gK} = \o^2.
\]
Then Proposition~\ref{proposition:GKzformula} gives us the formula
\begin{equation}
  \label{eqn:splitct2}
  18 M_\gK^{[1]}(\o^u,\o^{2v})
  = \#C_6^{(\g^u,\d^v)}(\FF_\gK) - e(\o^u,\o^{2v}),
\end{equation}
where
\begin{align}
  \label{eqn:splitct3}
  e(\o^u,\o^{2v}) = 
  \left[\begin{tabular}{@{}cl@{}}
     $6$&if $u\equiv 0\pmod6$\\
     $0$&otherwise\\
     \end{tabular}\right]
  & +\left[\begin{tabular}{@{}cl@{}}
     $3$&if $u\equiv v\pmod3$\\
     $0$&otherwise\\
     \end{tabular}\right]
  \notag\\
  & + \left[\begin{tabular}{@{}cl@{}}
     $3$&if $2u\equiv v\pmod3$\\
     $0$&otherwise\\
     \end{tabular}\right].
\end{align}
Further, Proposition~\ref{proposition:numC6pts} gives us the number of
points on the curve,
\begin{multline}
  \label{eqn:splitct4}
  \#C_6^{(\g^u,\d^v)}(\FF_\gK)
  = k + 1 + \Trace_{K/\QQ}(\o^{2v}\bar\pi) + \Trace_{K/\QQ}(\e^2\o^{3u+4v}\bar\pi)\\
   + \Trace_{K/\QQ}(\e\o^{5u+2v}\bar\pi) 
           + (-1)^{\frac12(k-1)}\Trace_{K/\QQ}(\e\o^{u+2v}\bar\pi),
\end{multline}
where $\e=\LS{2}{\gK}_3$.
\par
Combining~\eqref{eqn:splitct1}, \eqref{eqn:splitct2},
\eqref{eqn:splitct3}, and~\eqref{eqn:splitct4} gives an explicit,
albeit quite complicated, formula for~$M_k^{[1]}(\o^i,1)$.  In
principle, this formula could be computed by hand, but we are content
to evaluate it using PARI~\cite{PARI}, which yields the following
values:
\begin{align*}
  18\#M_k^{[1]}(\o^i,1) =
    \begin{cases}
      k^2 + 4 k + 13 & \text{if $i=0$,} \\
      k^2 - 8 k + 7  & \text{if $i=3$,} \\
      k^2 - 2 k + 1  & \text{otherwise.} \\
    \end{cases}
\end{align*}
(See Remark~\ref{remark:Mksplitcomputation} for further information
about this computation.) It is now a simple matter to compute the
value of~$\#M_k^{[1]}(S,1)$ for the four cases.  For example,
\begin{align*}
  \#M_k^{[1]}(\bfmu_6,1) &=
    \frac{1}{18}\bigl((k^2 + 4 k + 13) + (k^2-8k+7)
      + 4(k^2-2k+1)\bigr) \\*
%%    &\omit\hfill$ {}+ 4(k^2-2k+1)\bigr)$ \\
    &   = \frac{1}{3}(k^2-2k+4).
\end{align*}
This completes the proof of Proposition~\ref{propostion:Mk1Stable}.
\end{proof}

\begin{remark}
\label{remark:Mksplitcomputation}
We used PARI~\cite{PARI} to compute $M_k^{[1]}(\o^i,1)$ by evaluating
formulas~\eqref{eqn:splitct1}, \eqref{eqn:splitct2},
\eqref{eqn:splitct3}, and~\eqref{eqn:splitct4}, where we
treated~$k$,~$\pi$, and~$\e$ as indeterminates, and we formally set
$\bar\pi=k/\pi$ and $\bar\e=1/\e$.  
(The PARI code used for the computation is given in
Appendix~\ref{appendix:pariprogram}.)
The value of $M_k^{[1]}(\o^i,1)$
turns out to be a quadratic polynomial in~$k$ that is independent of
$k\bmod4$. We do not have an \emph{a priori} explanation for why this should
be the case. In order to illustrate the delicacy of the argument, we
suppose for a moment that the isogeny decomposition of the Jacobian of
$C_6^{(\g,\d)}$ in Proposition~\ref{proposition:ECDcurves} looks like
\[
  E^{(16\d^2)}\times E^{(4\g^4\d^4)}\times E^{(\g^5\d^2)}\times E^{(-\g\d^2)}
  \longrightarrow  \Jac(C_6^{(\g,\d)}) .
\]
(All that we have done is change the second elliptic factor from
$E^{(4\g^3\d^4)}$ to $E^{(4\d^4)}$.)  This would have the effect
in formula~\eqref{eqn:splitct4} of changing the second trace term from
$\Trace_{K/\QQ}(\e^2\o^{3u+4v}\bar\pi)$ to
$\Trace_{K/\QQ}(\e^2\o^{4v}\bar\pi)$. But with this small
modification, there is less cancelation in the computation
of~$M_k^{[1]}(\o^i,1)$, so for example
$\#M_k^{[1]}\bigl(\{\o,\o^5\},1\bigr)$ would equal
\[
  \frac{1}{9}\left(
    k^2 + 2k + 1 + 2\Trace\left(\CS{2}{\gK}^{2} \bar\pi^2\right)\right).
\]
Thus $\#M_k^{[1]}\bigl(\{\o,\o^5\},1\bigr)$ would depend on
both~$\CS{2}{\gK}$ and on the factorization of~$k$ in~$\Ocal_K$.
\end{remark}

\begin{remark}
Many of the cases of Proposition~\ref{propostion:Mk1Stable} can be
obtained somewhat more easily by working on elliptic curves $z(1-z)=\d
x^3$ or genus two curves $\g z^2(1-\g z^2)=\d x^3$. However, some
cases require the curves $\g z^6(1-\g z^6)=\d x^3$ of genus four, so
for unity of exposition and to save space, we have derived all cases
using these latter curves.
\end{remark}

Combining Conjecture~\ref{conjecture:N1kN1M1kM1} with the computations
in Propositions~\ref{propostion:MkStable}
and~\ref{propostion:Mk1Stable} yields precise formulas for the
conjectural density of Type~1 primes on~$y^2=x^3+k$ when~$k$ is prime.

\begin{conjecture}
\label{conjecture:N1density}
Let $k\ge5$ be a rational prime. Then
\[
  \lim_{X\to\infty} \frac{\Ncal^{[1]}_k(X)}{\Ncal_k(X)}
  = \frac13 + R(k),
\]
where~$R(k)$ depends on $k\pmod{36}$ and is given by the following 
table\textup:
\[
  \renewcommand{\arraystretch}{2.5}
  \begin{array}{|c||c|c|} \hline
  & k\bmod36 
  & R(k) \\ \hline\hline
  \textup{(a,c)} & 1,\;19 & \dfrac{2}{3(k-3)} \\ \hline
  \textup{(b)} & 13,\;25 & 0  \\ \hline
  \textup{(d)} & 7,\;31 & \dfrac{2k}{3(k-2)^2}  \\ \hline
  \end{array}
  \qquad
  \begin{array}{|c||c|c|} \hline
  & k\bmod36 
  & R(k) \\ \hline\hline
  \textup{(a,c)}  & 17,\;35 & \dfrac{2}{3(k-1)} \\ \hline
  \textup{(b)} & 5,\;29 & 0 \\ \hline
  \textup{(d)} & 11,\;23 & \dfrac{2k}{3(k^2-2)} \\ \hline
  \end{array}
\]
In particular, $R(k)=O(1/k)$.
\end{conjecture}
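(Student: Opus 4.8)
The plan is to reduce everything to Conjecture~\ref{conjecture:N1kN1M1kM1}, which already identifies the limit in question with the purely combinatorial ratio
\[
  \lim_{X\to\infty}\frac{\Ncal_k^{[1]}(X)}{\Ncal_k(X)} = \frac{\#M_k^{[1]}}{\#M_k}.
\]
So it suffices to compute $\#M_k^{[1]}/\#M_k$ for prime $k\ge5$ and to check that it equals $\frac13 + R(k)$ with $R(k)$ as tabulated. Both ingredients are in hand: Proposition~\ref{propostion:MkStable} supplies $\#M_k(S)$ and Proposition~\ref{propostion:Mk1Stable} supplies $\#M_k^{[1]}(S,1)$ for each of the four subsets $S\subset\bfmu_6$ attached to cases~(a)--(d), in each instance distinguishing whether $k$ is split ($k\equiv1\pmod3$) or inert ($k\equiv2\pmod3$) in $K$. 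Since $M_k^{[1]}=M_k^{[1]}(S,1)$ for the appropriate $S$, forming the ratio case by case is then a finite bookkeeping exercise.

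First I would fix the dictionary between the case labels and residue classes. For prime $k\ge5$ with $\gcd(6,k)=1$, the condition $k\pequiv\pm1\pmod9$ is simply $k\equiv\pm1\pmod9$, so cases~(a),(c) are $k\equiv\pm1\pmod9$ and cases~(b),(d) are $k\not\equiv\pm1\pmod9$; within each pair, (a),(b) are $k\equiv1\pmod4$ while (c),(d) are $k\equiv3\pmod4$. Because $k\bmod9$ already determines $k\bmod3$, hence split versus inert, the pair $(k\bmod4,\,k\bmod9)$ fixes $k\bmod36$ by the Chinese remainder theorem. I would tabulate which of the twelve residues coprime to~$6$ lands in each of the eight (case $\times$ split/inert) buckets; this produces the groupings $\{1,19\}$, $\{17,35\}$, $\{13,25\}$, $\{5,29\}$, $\{7,31\}$, $\{11,23\}$ appearing in the statement.

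Then I would carry out the eight ratio computations and match them against the dictionary. For instance, in case~(a) with $k$ split the two propositions give
\[
  \frac{\#M_k^{[1]}}{\#M_k}
  = \frac{\tfrac19(k-1)^2}{\tfrac13(k-1)(k-3)}
  = \frac{k-1}{3(k-3)}
  = \frac13 + \frac{2}{3(k-3)},
\]
while inert case~(a) gives $\tfrac19(k+1)^2 \big/ \tfrac13(k^2-1) = \frac{k+1}{3(k-1)} = \frac13 + \frac{2}{3(k-1)}$. The constant prefactors separating cases~(a) and~(c) cancel in the quotient, so case~(c) reproduces these same two values---this is exactly why the table merges~(a) and~(c). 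Case~(b) gives $\tfrac16(k-1)(k-3)\big/\tfrac12(k-1)(k-3)$ and $\tfrac16(k^2-1)\big/\tfrac12(k^2-1)$, both $\frac13$, so $R(k)=0$; and case~(d) gives $\tfrac13(k^2-2k+4)/(k-2)^2 = \frac13 + \frac{2k}{3(k-2)^2}$ when split and $\tfrac13(k^2+2k-2)/(k^2-2) = \frac13 + \frac{2k}{3(k^2-2)}$ when inert. Assembling these reproduces the table, and $R(k)=O(1/k)$ is immediate from the explicit rational functions.

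The honest assessment is that essentially all the work is already done: the genus-four point counts behind Proposition~\ref{propostion:Mk1Stable} and the counting behind Proposition~\ref{propostion:MkStable} are the substantive content, and what remains is their assembly. The only step requiring genuine care is the two-layer modular bookkeeping---verifying that the labels mod~$4$ and mod~$9$ combine correctly with split/inert mod~$3$ to single out the stated classes mod~$36$---together with the pleasant observation that the prefactors distinguishing~(a) from~(c) cancel in the ratio. I anticipate no obstacle beyond this routine verification.
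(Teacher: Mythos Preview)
Your proposal is correct and follows exactly the route the paper intends: the paper states just before Conjecture~\ref{conjecture:N1density} that combining Conjecture~\ref{conjecture:N1kN1M1kM1} with Propositions~\ref{propostion:MkStable} and~\ref{propostion:Mk1Stable} yields the precise formulas, without spelling out the case-by-case ratio computation or the mod~$36$ bookkeeping. You have supplied those details accurately, including the observation that the factors of~$\tfrac13$ versus~$\tfrac23$ distinguishing cases~(a) and~(c) cancel in the quotient.
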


We do not have an intrinsic explanation for why~$R(k)$ is the same in
cases~(a) and~(c), nor do we know why~$R(k)=0$ in case~(b).

%%%%%%%%%%%%%%%%%%%%%%%%%%%%%%%%%%%%%%%%%%%%%%%%%%%%%%%%%%%%%%%%%%%%%%
\section{Amicable pairs for elliptic curves --- Experiments}
\label{section:amicableCMexper}
%%%%%%%%%%%%%%%%%%%%%%%%%%%%%%%%%%%%%%%%%%%%%%%%%%%%%%%%%%%%%%%%%%%%%%
In this section we present the results of experiments that test the
reasonableness of our conjectures. We begin with
Conjecture~\ref{conj:amicablepair}, which deals with the case of CM
curves having nonzero $j$-invariant.
\par
We computed the number~$\Qcal_E(X)$ of amicable pairs up to~$X$ for
elliptic curves with~CM by the imaginary quadratic order of
discriminant~$-D$ and conductor~$f$.  Theorem~\ref{thm:amicableCMconj}
says that it suffices to consider $D\equiv3\pmod4$. Further, the
assumption that~$E$ is defined over~$\QQ$ means that~$\Ocal$ has class
number one, so the classification of imaginary quadratic fields of
class number one combined with an elementary formula for the class
number of an order~\cite[Exercise~4.12]{Shimura} imply that the only
possibilities for~$D$ are~$D\in\{3,7,11,19,43,67,163\}$, and the
possible values of~$f$ are given by $f\in\{1,2,3\}$ if $D=3$,
$f\in\{1,2\}$ if $D=7$, and $f=1$ in all other cases.
See~\cite[A~\S3]{ATAEC} for a Weierstrass equations for each~CM type.
\par
We ignore for the moment the case $(D,f)=(3,1)$.  As noted in the
proof of Theorem~\ref{thm:amicableCMconj}, the curves with $(D,f)$
equal to $(3,2)$,~$(7,1)$, and~$(7,2)$ have nontrivial $2$-torsion, so
neither they nor any of their (necessarily quadratic) twists have
amicable pairs. The curve with $(D,f)=(3,3)$ listed
in~\cite[A~\S3]{ATAEC} has nontrivial $3$-torsion, but it has
quadratic twists with trivial torsion, so is a candidate to have
amicable pairs.  Table~\ref{tableCM1} lists the number~$\Qcal_E(X)$ of
amicable pairs up to the given bound and the ratio of~$\Qcal_E(X)$ to
the number~$\Ncal_E(X)$ of primes~$p$ such that~$\#\Etilde_p(\FF_p)$
is prime.  For this table we used the following Weierstrass
equations.\footnote{Calculations on quadratic twists of the listed
  curves yielded virtually identical results.}
\begin{align*}
  (D,f) &=( 3,3) & y^2 &= x^3  -120x + 506, \\
  (D,f) &=( 11,1) & y^2 +y&= x^3 - x^2 -7x +10, \\
  (D,f) &=( 19,1) & y^2+y &= x^3 -38x+ 90, \\
  (D,f) &=( 43,1) & y^2+y &= x^3 -860x+ 9707, \\
  (D,f) &=( 67,1) & y^2+y &= x^3 -7370x+ 243528, \\
  (D,f) &=( 163,1) & y^2+y &= x^3 -2174420x+ 1234136692.
\end{align*}
The results in Table~\ref{tableCM1} are consistant with
Conjecture~\ref{conj:CMamicable}, which predicts that the
ratio~$\Qcal_E(X)/\Ncal_E(X)$ should approach~$\frac14$.

\begin{table}
\begin{center}
\renewcommand{\arraystretch}{1.25}
\begin{tabular}{|r||*{10}{r|}} \hline
  $(D,f)$ & (3,3) & (11,1) & (19,1) & (43,1) & (67,1) & (163,1) \\\hline\hline
  $\Qcal_E(10^{5})$  & 124 & 48 & 103 & 205 & 245 & 395 \\ \hline
  $\Qcal_E(10^{5})/\Ncal_E(10^{5})$  & 0.251 & 0.238 & 0.248 & 
       0.260 & 0.238 & 0.246 \\ 
  \hline\hline
  $\Qcal_E(10^{6})$  & 804 & 303 & 709 & 1330 & 1671 & 2709 \\ \hline
  $\Qcal_E(10^{6})/\Ncal_E(10^{6})$  & 0.250 & 0.247 & 0.253 & 
       0.255 & 0.245 & 0.247 \\ 
  \hline\hline
  $\Qcal_E(10^{7})$  & 5581 & 2267 & 5026 & 9353 & 12190 & 19691 \\ \hline
  $\Qcal_E(10^{7})/\Ncal_E(10^{7})$  & 0.249 & 0.251 & 0.250 & 
       0.251 & 0.250 & 0.252 \\ \hline
\end{tabular}
\end{center}
\caption{$\Qcal_E(X)$ and $\Qcal_E(X)/\Ncal_E(X)$
for elliptic curves with CM by $\QQ(\sqrt{-D})$}
\label{tableCM1}
\end{table}

We next considered the curves $y^2=x^3+k$ with $j(E)=0$.
Table~\ref{tableCM5}, which is included for historical reasons, was
our first intimation that the limiting value
of~$\Qcal_E(X)/\Ncal_E(X)$  behaves differently for different
values of~$k$, with no obvious pattern for $2\le k\le 10$. (Note that
we do not list values of~$k$ that are squares or cubes, since in those
cases~$E(\QQ)_\tors$ is nontrivial, so there are no amicable pairs.)

\begin{table}
\begin{center}
\renewcommand{\arraystretch}{1.25}
\begin{tabular}{|r||*{10}{r|}} \hline
  $k$  & 2 & 3 & 5 & 6 & 7 & 10 \\ \hline\hline
  $X = 10^{5}$  & 0.251 & 0.122 & 0.081 & 0.134 & 0.139 & 0.125 \\ \hline
  $X = 10^{6}$  & 0.250 & 0.139 & 0.083 & 0.142 & 0.133 & 0.107 \\ \hline
  $X = 10^{7}$  & 0.249 & 0.139 & 0.082 & 0.139 & 0.129 & 0.107 \\ \hline
\end{tabular}
\end{center}
\caption{$\Qcal_E(X)/\Ncal_E(X)$ for elliptic curves $y^2=x^3+k$}
\label{tableCM5}
\end{table}

We recall the notation~$\Ncal_k^{[1]}$ for the set of Type~1 primes
for the curve~$y^2=x^3+k$; see Section~\ref{section:amicableCMj0} for
the precise definition.  Conjecture~\ref{conj:QoverNeq12} predicts
that $\Qcal_k(X)\sim\frac14\Ncal_k^{[1]}(X)$, and in the case that~$k$
is prime, Conjecture~\ref{conjecture:N1density} says that
\[
  \Ncal_k^{[1]}(X) \sim \left(\frac13+R(k)\right)  \Ncal_k(X),
\]
where~$R(k)$ is given by an explicit formula that depends on~$k$
modulo~$36$. We tested these two conjectures by computing
$\Qcal_k(X)$, $\Ncal_k^{[1]}(X)$, and $\Ncal_k(X)$ for~$X=10^8$. The
results are listed in Table~\ref{tableCM9}. Column~5 provides
convincing evidence for Conjecture~\ref{conj:QoverNeq12},  and the
final two columns show that Conjecture~\ref{conjecture:N1density} is
in good agreement with experiment in all eight cases.  (The
notation~$(x.n)$ after each value of~$k$ indicates the case
$x=\text{(a),\dots,(d)}$ and the congruence class~$k\equiv n\pmod3$
from Conjecture~\ref{conjecture:N1density}.)

\begin{table}[ht]
\tiny
\begin{center}
\renewcommand{\arraystretch}{1.25}
\begin{tabular}{|r||*{10}{c|}} 
  \cline{6-7}
  \multicolumn{5}{c}{} & \multicolumn{2}{|c|}{Density of Type 1 primes} \\
  \multicolumn{5}{c}{} & \multicolumn{2}{|c|}{$\Ncal_k^{[1]}(X)/\Ncal_k(X)$} \\
  \hline
  \hfill\hfill$k$\hfill\hfill & $\Qcal_k(X)$ & $\Ncal_k^{[1]}(X)$ & $\Ncal_k(X)$ 
   & $\Qcal/\Ncal^{[1]}$ & experiment & conjecture \\
  \hline\hline
\text{5 (b.2)}
 &29340&58594&175703&0.251&0.3335&$\frac{1}{3} = 0.3333$ \\ \hline
\text{7 (d.1)}
 &43992&87825&168743&0.251&0.5205&$\frac{13}{25} = 0.5200$ \\ \hline
\text{11 (d.2)} 
 &33721&66698&169062&0.253&0.3945&$\frac{47}{119} = 0.3950$ \\ \hline
\text{13 (b.1)}
 &28036&55766&167333&0.252&0.3333&$\frac{1}{3} = 0.3333$ \\ \hline
\text{17 (a.2)}
 &32008&63810&169226&0.251&0.3771&$\frac{3}{8} = 0.3750$ \\ \hline
\text{19 (c.1)} 
 &31729&63066&168196&0.252&0.3750&$\frac{3}{8} = 0.3750$ \\ \hline
\text{23 (d.2)}
 &30480&61210&168512&0.249&0.3632&$\frac{191}{527} = 0.3624$ \\ \hline
\text{29 (b.2)}
 &28085&56286&168642&0.249&0.3338&$\frac{1}{3} = 0.3333$ \\ \hline
\text{31 (d.1)}
 &30301&60349&168344&0.251&0.3585&$\frac{301}{841} = 0.3579$ \\ \hline
\text{37 (a.1)}
 &29728&59430&168471&0.250&0.3528&$\frac{6}{17} = 0.3529$ \\ \hline
\text{41 (b.2)}
 &28050&56381&168567&0.249&0.3345&$\frac{1}{3} = 0.3333$ \\ \hline
\text{43 (d.1)}
 &29619&58807&168410&0.252&0.3492&$\frac{589}{1681} = 0.3504$ \\ \hline
\text{47 (d.2)}
 &29220&58400&168365&0.250&0.3469&$\frac{767}{2207} = 0.3475$ \\ \hline
\text{53 (a.2)}
 &29278&58257&168353&0.252&0.3460&$\frac{9}{26} = 0.3462$ \\ \hline
\text{59 (d.2)}
 &29378&58422&168783&0.252&0.3461&$\frac{1199}{3479} = 0.3446$ \\ \hline
\text{61 (b.1)}
 &28027&55816&168197&0.251&0.3318&$\frac{1}{3} = 0.3333$ \\\hline
\text{67 (d.1)}
 &29242&57944&168239&0.253&0.3444&$\frac{1453}{4225} = 0.3439$ \\ \hline
\text{71 (c.2)}
 &28789&57661&168508&0.249&0.3422&$\frac{12}{35} = 0.3429$ \\ \hline
\text{73 (a.1)}
 &28975&57828&168614&0.251&0.3430&$\frac{12}{35} = 0.3429$ \\\hline
\text{79 (d.1)}
 &29127&57937&168690&0.252&0.3435&$\frac{2029}{5929} = 0.3422$ \\ \hline
\text{83 (d.2)}
 &29032&57871&168435&0.251&0.3436&$\frac{2351}{6887} = 0.3414$ \\ \hline
\text{89 (a.2)}
 &28909&57634&168737&0.251&0.3416&$\frac{15}{44} = 0.3409$ \\\hline
\text{97 (b.1)}
 &28014&55880&168457&0.251&0.3317&$\frac{1}{3} = 0.3333$ \\\hline
%% Primes up to 10^{8}
 \end{tabular}
\end{center}
\caption{Density of Amicable and  Type 1 primes with
 $p\le X= 10^8$ for the curve $y^2=x^3+k$, prime $k$.}
\label{tableCM9}
%% PARI routine Type1PrimeTable
\end{table}

We also checked Conjecture~\ref{conjecture:N1kN1M1kM1} experimentally
for composite values of~$k$. The results are listed in
Table~\ref{tableCM10}, where the conjectural limiting ratio is
obtained by explicitly counting the size of the sets~$\Mcal_k$
and~$\Mcal_k^{[1]}$. The top eight $k$ entries in this table are
products of two primes covering the usual eight cases; the final four
entries include two values that are not square-free ($175=5\cdot7^2$
and $245=5\cdot7^2$) and two values that are products of three distinct
primes ($385=5\cdot7\cdot11$ and $455=5\cdot7\cdot13 $).
\par
In order to further test the validity of
Conjecture~\ref{conjecture:N1kN1M1kM1} , we recomputed the final entry
in the table with $X=10^9$ and obtained
\[
  \Ncal_{455}^{[1]}(10^9)/\Ncal_{455}(10^9) = 0.3380.
\]
This is in excellent agreement with the theoretical value
of $\frac{4699}{13915} = 0.3377$.

\begin{table}[ht]
\tiny
\begin{center}
\renewcommand{\arraystretch}{1.25}
\begin{tabular}{|r||*{10}{c|}} 
  \cline{6-7}
  \multicolumn{5}{c}{} & \multicolumn{2}{|c|}{Density of Type 1 primes} \\
  \multicolumn{5}{c}{} & \multicolumn{2}{|c|}{$\Ncal_k^{[1]}(X)/\Ncal_k(X)$} \\
  \hline
  \hfill\hfill$k$\hfill\hfill
   & $\Qcal_k(X)$ & $\Ncal_k^{[1]}(X)$ & $\Ncal_k(X)$ 
   & $\Qcal/\Ncal^{[1]}$ & experiment & conjecture \\
  \hline\hline
  \text{35 (d.2)}
    & 31423 & 63169 & 168666 & 0.248 & 0.3745 & $\frac{43}{115} = 0.3739$ 
      \\ \hline
  \text{55 (d.1)}
    & 29645 & 58718 & 168870 & 0.253 & 0.3477 & $\frac{949}{2737} = 0.3467$ 
      \\ \hline
  \text{77 (b.2)}
    & 28170 & 56251 & 168921 & 0.251 & 0.3330 & $\frac{1}{3} = 0.3333$ \\ \hline
  \text{85 (b.1)}
    & 28187 & 56142 & 168767 & 0.251 & 0.3327 & $\frac{1}{3} = 0.3333$ \\ \hline
  \text{323 (c.2)}
    & 28396 & 56609 & 168585 & 0.251 & 0.3358 & $\frac{43}{128} = 0.3359$ 
      \\ \hline
  \text{629 (a.2)}
    & 28210 & 56269 & 168042 & 0.251 & 0.3349 & $\frac{3267}{9766} = 0.3345$ 
      \\ \hline
  \text{703 (c.1)}
    & 28558 & 56754 & 168817 & 0.252 & 0.3362 & $\frac{1097}{3278} = 0.3347$ 
      \\ \hline
  \text{901 (a.1)}
    & 28341 & 56384 & 168411 & 0.252 & 0.3348 & $\frac{3738}{11189} = 0.3341$
       \\ \hline \hline
  \text{175 (d.1)}
    & 31543 & 63177 & 168840 & 0.250 & 0.3742 & $\frac{43}{115} = 0.3739$ 
        \\ \hline
  \text{245 (b.2)}
    & 29722 & 58848 & 175934 & 0.253 & 0.3345 & $\frac{1}{3} = 0.3333$ 
       \\ \hline
  \text{385 (b.1)}
    & 28070 & 56158 & 168393 & 0.250 & 0.3335 & $\frac{1}{3} = 0.3333$ 
       \\ \hline
  \text{455 (d.2)}
    & 28346 & 56627 & 168342 & 0.250 & 0.3364 & $\frac{4699}{13915} = 0.3377$
         \\ \hline
 %% Primes up to 10^{8}
 \end{tabular}
\end{center}
\caption{Density of Amicable and  Type 1 primes with
 $p\le X= 10^8$ for the curve $y^2=x^3+k$, composite $k$.}
\label{tableCM10}
%% PARI routine Type1PrimeTable
\end{table}

Finally, we consider Conjecture~\ref{conjecture:aliquotcount}, which
deals with non-CM curves. This conjecture is much harder to check
numerically because the function~$\sqrt{X}/(\log X)^2$ grows quite
slowly.  We performed an extended search for amicable pairs on the
elliptic curve
\begin{equation}
  \label{eqn:Cond43curve}
  E:y^2+y=x^3+x^2
\end{equation}
of conductor~$43$ that we studied in Example~\ref{example:cond3743}.
Appendix~\ref{appendix:pairsoncond43} lists all normalized amicable
pairs~$(p,q)$ with $p<10^{11}$.
Conjectures~\ref{conj:amicablepair}(a)
and~\ref{conjecture:aliquotcount} says that~$\Qcal_E(X)$, the number
of amicable pairs up to~$X$, should grow like a multiple of
$\sqrt{X}/(\log X)^2$.  Table~\ref{table:amicpair43} tests this
conjecture by computing the ratios
\[
  \frac{\Qcal_E(X) }{\sqrt{X}/2(\log X)^2}
  \qquad\text{and}\qquad
  \frac{\log \Qcal_E(X)}{\log X}
\]
for various values of~$X$. The computations for
Table~\ref{table:amicpair43}, i.e., the computation of
$\Qcal_E(10^{11})$, took approximately five days running in parallel
on a cluster of ten machines.  The third column of
Table~\ref{table:amicpair43} provides some small support for the
conjecture that~$\Qcal(X)$ grows like a multiple of $\sqrt{X}/(\log
X)^2$. On the other hand, although the fourth column of the table
suggests that~$\Qcal(X)$ grows like~$X^\d$ for some $\d>0$, it is far
from clear that~$\d$ is as large as~$\frac12$. We suspect the problem
is that we are only able to compute~$\Qcal(X)$ up to $X=10^{11}$, and
although~$10^{11}$ is a moderately large number in terms of
computation time, it is comparatively small compared to the
likely error terms in any putative asymptotic formula for~$\Qcal(X)$.

\begin{table}[ht]
\begin{center}
\small
\renewcommand{\arraystretch}{1.25}
\begin{tabular}{|c|c|c|c|c|c|c|} \hline
  $X$ & $\Qcal(X)$ & $\Qcal(X)\big/\frac{\sqrt{X}}{(\log X)^2}$
      & $\frac{\log \Qcal(X)}{\log X} $ \\ \hline\hline
  $10^{6}$ &   2  & 0.382 & 0.050 \\ \hline
  $10^{7}$ &   4  & 0.329 & 0.086 \\ \hline
  $10^{8}$ &   5  & 0.170 & 0.087 \\ \hline
  $10^{9}$ &   10 & 0.136 & 0.111 \\ \hline
  $10^{10}$ &  21 & 0.111 & 0.132 \\ \hline
  $2\cdot 10^{10}$ &  32 & 0.127 & 0.146 \\ \hline
  $4\cdot 10^{10}$ &  37 & 0.110 & 0.148 \\ \hline
  $6\cdot 10^{10}$ &  44 & 0.111 & 0.152 \\ \hline
  $8\cdot 10^{10}$ &  53 & 0.118 & 0.158 \\ \hline
  $ 10^{11}$ &  55 & 0.112 & 0.158 \\ \hline
\end{tabular}
\end{center}
\caption{Counting amicable pairs for $y^2+y=x^3+x^2$}
\label{table:amicpair43}
%% The pari routine Cond43AmicablePairs() prints the data
\end{table}

Finally, we searched for normalized aliquot triples~$(p,q,r)$ on the
curve~\eqref{eqn:Cond43curve}. We found no examples with $p<10^8$.
To compare this to Conjecture~\ref{conjecture:aliquotcount}, we consider
the counting functions 
\[
   \Pcal_E^{[\ell]}(X) 
    = \#\left\{ (p_1,\ldots,p_\ell) : 
    \begin{tabular}{@{}l@{}}
      $p_i$ distinct primes with $p_1\le X$ and\\
        $\#\Etilde_{p_i}(\FF_{p_i})=p_{i+1}$ for $1\le i\le \ell-1$\\
    \end{tabular}
 \right\}.
\]
Note that $\Pcal_E^{[\ell]}(X)$ is not counting aliquot cycles,
because we do not require that $\#\Etilde_{p_\ell}(\FF_{p_\ell})$
equal~$p_1$. A natural generalization of the Koblitz--Zywina
conjecture (Conjecture~\ref{conj:kobzy}) is that
\[
   \Pcal_E^{[\ell]}(X) \sim C_{E/\QQ}^{[\ell]} \frac{X}{(\log X)^{\ell}}.
\]
We computed $\Pcal_E^{[2]}(X)$ and $\Pcal_E^{[3]}(X)$
for the curve~\eqref{eqn:Cond43curve} and used it to estimate
the values of the constants~$C_{E/\QQ}^{[2]}$ and~$C_{E/\QQ}^{[3]}$.
The results are listed in Table~\ref{table:N2N3C2C3}.

\begin{table}[ht]
\small
\[
  \begin{array}{|c|c|c|c|c|c|} \hline
    X & \Pcal_E^{[2]}(X) & \Pcal_E^{[3]}(X) & C_{E/\QQ}^{[2]} & C_{E/\QQ}^{[3]}
      & \frac12\Pcal_E^{[3]}(X)X^{-1/2} \\
    \hline
    10^5 & 485 & 21 & 0.643 & 0.320 & 0.033  \\ \hline
    10^6 & 3099 & 116 & 0.592 & 0.306 & 0.058  \\ \hline
    10^7 & 22328 & 741 & 0.580 & 0.310 & 0.117  \\ \hline
    10^8 & 168611 & 4995 & 0.572 & 0.312 & 0.250  \\ \hline
  \end{array}
\]
\caption{An aliquot triple estimate for $y^2+y=x^3+x^2$}
\label{table:N2N3C2C3}
\end{table}

Our heuristic argument from Section~\ref{section:nonCM} suggests that
if~$(p,q,r)$ is a triple counted in~$\Pcal_E^{[3]}(X)$, then the
probability that~$\#\Etilde_r(\FF_r)$ equals~$p$ is~$O(r^{-1/2})$, and
more precisely, we expect it to be between~$\frac12r^{-1/2}$
and~$\frac14r^{-1/2}$. Thus the number of aliquot triples less
than~$X$ should roughly be somewhere between
\begin{equation}
  \label{eqn:12NE3}
  \frac12 \Pcal_E^{[3]}(X) X^{-1/2}
  \qquad\text{and}\qquad
  \frac14 \Pcal_E^{[3]}(X) X^{-1/2}.
\end{equation}
The last column of Table~\ref{table:N2N3C2C3} gives our heuristic
upper bound for the number of aliquot triples up to~$X$.  It is thus
not surprising that we found no examples up to~$10^8$.  Using the
estimate~$C_{E/\QQ}^{[3]}\approx0.312$, setting $\Pcal_E^{[3]}(X)
\approx C_{E/\QQ}^{[3]} \frac{X}{(\log X)^{3}}$, and taking the upper
bound in~\eqref{eqn:12NE3}, it would require at least
$X\approx5\cdot10^9$ in order to expect find an aliquot triple.  And
even for~$X=10^{11}$ we wouldn't expect more than two or three.
%% Let U(X) = 0.312 * (1/2) * sqrt(X)/(log X)^3. Then
%% U(5.13444*10^9) = 1.00000052 and U(10^11) = 3.036.

%%%%%%%%%%%%%%%%%%%%%%%%%%%%%%%%%%%%%%%%%%%%%%%%%%%%%%%%%%%%%%%%%%%%%%
\section{Motivation and generalizations}
\label{section:tangentialremarks}
%%%%%%%%%%%%%%%%%%%%%%%%%%%%%%%%%%%%%%%%%%%%%%%%%%%%%%%%%%%%%%%%%%%%%%

\begin{remark}
\label{remark:motivation}
Elliptic amicable pairs and aliquot cycles appeared in a natural
fashion when the authors were generalizing to elliptic divisibility
sequences Smyth's results~\cite{Smyth} on index divisibility of
Lucas sequences. Let~$(W_n)_{n\ge1}$ be
a normalized nonsingular nonperiodic elliptic divisibility sequence
associated to an elliptic curve~$E/\QQ$, and consider the set
\[
  \Scal=\{n\ge1 : n\mid W_n\}. 
\]
An element of~$\Scal$ is called \emph{basic} if $n/p\notin\Scal$ for
all primes$p$ dividing~$n$.  
It turns out that basic elements can be created using
aliquot cycles, a phenomenon that does not occur in Smyth's work.
More precisely, given any aliquot cycle~$(p_1,\ldots,p_\ell)$
for~$E/\QQ$ of length $\ell\ge2$, the product~$p_1p_2\cdots p_\ell$ is
a basic element for~$(W_n)$, and more generally, any product of such
products is basic.  See~\cite{SilvStang} for details.
\end{remark}

\begin{remark}
\label{remark:genECaliq}
As we have defined them, aliquot cycles for elliptic curves differ in
a significant way from classical aliquot cycles associated to the sum
of divisors function. In the classical case, every integer~$n$ leads
to a possibly non-repeating aliquot sequence
$\bigl(n,\stilde(n),\stilde^2(n),\stilde^3(n),\ldots\bigr)$, and it is
an aliquot cycle if some iterate~$\stilde^k(n)$ eventually returns
to~$n$. (A major open problem is whether there are starting values for
which the sequence is unbounded.) But for elliptic curves, if we
arrive at a prime~$p$ such that~$\#\Etilde_p(\FF_p)$ is not prime,
then the sequence cannot be continued.  We propose here two
alternative definitions of elliptic aliquot sequences that more
closely resemble the classical definition.  We leave the investigation
of these generalized sequences to a future paper.
\end{remark}

\begin{definition}
Let~$E/\QQ$ be an elliptic curve, let $L(E/\QQ,s)=\sum_{n\ge1}a_n/n^s$
be the $L$-series of~$E$, and define a function
\[
  F_E : \NN \longrightarrow\NN,\qquad
  F_E(n) = n+1-a_n.
\]
A \emph{Type L aliquot sequence} for~$E/\QQ$ is a sequence
obtained by starting at some $n\in\NN$ and repeatedly applying the
map~$F_E$. A \emph{Type L aliquot cycle} is a Type~$L$ aliquot sequence that
repeats at its starting value. 
\end{definition}

\begin{definition}
Let~$E/\QQ$ be an elliptic curve, let~$\Ecal^0/\ZZ$ be the open subset
of the N\'eron model for~$E/\QQ$ consisting of the connected
components of each fiber, and define a function
\[
  G_E : \NN \longrightarrow\NN,\qquad
  G_E(n) = \#\Ecal^0(\ZZ/n\ZZ).
\]
A \emph{Type N aliquot sequence} for~$E/\QQ$ is a sequence obtained by
starting at some $n\in\NN$ and repeatedly applying the 
map~$G_E$. A \emph{Type N aliquot cycle} is a Type~$N$ aliquot
sequence that repeats at its starting value. 
\end{definition}

\begin{remark}
\label{remark:numberfield}
There is a natural way to generalize the notion of elliptic amicable
pairs and aliquot cycles to elliptic curves defined over number
fields. Thus let~$F/\QQ$ be a number field and~$E/F$ an elliptic
curve. We will say that a sequence of distinct degree one prime
ideals~$\gp_1,\gp_2,\ldots,\gp_\ell$ is an \emph{aliquot cycle} of
length~$\ell$ for~$E/F$ if~$E$ has good reduction at every~$\gp_i$ and
\begin{multline*}
  \#\Etilde_{\gp_1}(\FF_{\gp_1}) = \Norm_{K/\QQ}(\gp_2),\quad
  \#\Etilde_{\gp_2}(\FF_{\gp_2}) = \Norm_{K/\QQ}(\gp_3),\quad \ldots \\
  \#\Etilde_{\gp_{\ell-1}}(\FF_{\gp_{\ell-1}}) = \Norm_{K/\QQ}(\gp_\ell),\quad
  \#\Etilde_{\gp_\ell}(\FF_{\gp_\ell}) = \Norm_{K/\QQ}(\gp_1).
\end{multline*}
Many of the methods and results in this paper carry over in a
straightforward manner to the number field case.  For example, the
following analogue of Theorem~\ref{thm:amicableCMconj}
holds.

\begin{theorem}
Let $F/\QQ$ be a number field, and let $E/F$ be an elliptic curve with
complex multiplication by an order in the quadratic imaginary
field~$K$. Suppose that $\gp$ and $\gq$ are degree one primes of~$F$
at which~$E$ has good reduction,  that $\Norm_{F/\QQ}\gp\ge5$,
and that
\[
  \#\Etilde_{\gp}(\FF_{\gp}) = \Norm_{F/\QQ}\gq.
\]
Assume further that $j(E)\ne0$. Then
\[
  \#\Etilde_{\gq}(\FF_{\gq}) = \Norm_{F/\QQ}\gp
  \qquad\text{or}\qquad
  \#\Etilde_{\gq}(\FF_{\gq}) = 2\Norm_{F/\QQ}\gq+2-\Norm_{F/\QQ}\gp.
\]
\end{theorem}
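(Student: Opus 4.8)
The plan is to reduce the number-field statement to the situation of Theorem~\ref{thm:amicableCMconj} by exploiting the degree-one hypothesis, and then to run the Frobenius-and-trace computation essentially verbatim. Write $p=\Norm_{F/\QQ}\gp$ and $q=\Norm_{F/\QQ}\gq$; since $\gp$ and $\gq$ are degree-one primes, these are rational primes, the residue fields are the prime fields $\FF_\gp\cong\FF_p$ and $\FF_\gq\cong\FF_q$, and $\Etilde_\gp$, $\Etilde_\gq$ are elliptic curves over $\FF_p$, $\FF_q$. This is the crucial point: over a non-prime residue field the Frobenius trace relates to $\Norm\gp$ rather than to a rational prime, and the finite-field CM theory no longer applies unchanged. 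With degree-one primes the hypotheses read: $p\ge5$ is prime, $q$ is prime, and $\#\Etilde_\gp(\FF_p)=q$. Let $\phi_\gp$ denote the $p$-power Frobenius of $\Etilde_\gp$ and $a_\gp=\Tr(\phi_\gp)=p+1-q$ its trace, so the problem is to pin down $a_\gq=\Tr(\phi_\gq)=q+1-\#\Etilde_\gq(\FF_q)$.

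First I would establish the splitting and integrality statements. Assuming $q$ is odd, the number-field analogue of Lemma~\ref{lemma:psplits} applies with no change to its proof: $a_\gp$ is odd, hence nonzero, hence (by Hasse and $p\ge5$) the reduction $\Etilde_\gp$ is ordinary, so $\End(\Etilde_\gp)\otimes\QQ\cong K=\QQ(\sqrt{a_\gp^2-4p})$ and $p$ splits in $K$. Ordinarity guarantees that the reduced CM endomorphisms are defined over $\FF_p$ and that $\phi_\gp\in\Ocal_K$, with $\Norm_{K/\QQ}(\phi_\gp)=p$ and $\Norm_{K/\QQ}\bigl(1-\phi_\gp\bigr)=\#\Etilde_\gp(\FF_p)=q$. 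A short elementary argument then rules out ramification of $q$ in $K$: ramification would force $q\mid a_\gp^2-4p\equiv(p-1)^2\pmod{q}$, hence $q\mid p-1$, and combining this with the Hasse bound $q=p+1-a_\gp\ge(\sqrt p-1)^2$ leaves only the case $(p,q)=(7,3)$, which has $j(E)=0$ and is excluded by hypothesis. Consequently $q$ splits in $K$, the reduction $\Etilde_\gq$ is ordinary as well, and $\phi_\gq\in\Ocal_K$ with $\Norm_{K/\QQ}(\phi_\gq)=q$.

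The key new wrinkle relative to the $\QQ$ case is the unit group. Over $\QQ$ the hypothesis $j(E)\ne0$, combined with the explicit list of thirteen CM curves, gave $\Ocal^*=\{\pm1\}$; over a general $F$ this implication fails, since the order $\ZZ[i]$ (with $j=1728\ne0$) has $\Ocal^*=\bfmu_4$. I would dispose of this case directly: if $\End(E)=\ZZ[i]$ then $j(E)=1728$, so $E$ has a model $y^2=x^3+ax$ with the rational two-torsion point $(0,0)$, forcing $\#\Etilde_\gp(\FF_p)$ to be even and hence $q=2$. Thus, apart from the degenerate situation $q=2$ (which by Hasse forces $p=5$ and can be checked by hand), the extra units are excluded and $\Ocal^*=\{\pm1\}$.

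With $\Ocal^*=\{\pm1\}$ in hand, the conclusion follows exactly as in Theorem~\ref{thm:amicableCMconj}. Both $1-\phi_\gp$ and $\phi_\gq$ lie in $\Ocal_K$ and generate prime ideals of norm $q$ above the split prime $q$, so by uniqueness of factorization there is a unit $u\in\Ocal^*=\{\pm1\}$ with $\phi_\gq=u\bigl(1-\phi_\gp\bigr)$ or $\phi_\gq=u\overline{\bigl(1-\phi_\gp\bigr)}$. Taking traces and using $\Tr\bigl(1-\phi_\gp\bigr)=2-a_\gp=q+1-p$ gives $\Tr(\phi_\gq)=\pm(q+1-p)$, whence
\[
  \#\Etilde_\gq(\FF_q)=q+1-\Tr(\phi_\gq)=q+1\mp(q+1-p)\in\{\,p,\;2q+2-p\,\}.
\]
I expect the main obstacle to be the bookkeeping around rationality of endomorphisms and the unit group rather than the trace computation, which is purely formal. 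Specifically, one must be careful that over $F$ the CM endomorphisms need not be $F$-rational (this is repaired by ordinarity, which descends them to $\FF_p$) and that $j\ne0$ no longer suffices to kill the extra units; and one must separately dispose of supersingular or ramified reduction at $\gq$ together with the degenerate case $q=2$. All of these are handled by the short arguments indicated above.
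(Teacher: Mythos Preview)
Your overall strategy is the one the paper intends: reduce to rational primes $p,q$ via the degree-one hypothesis and rerun the proof of Theorem~\ref{thm:amicableCMconj}. The trace computation at the end is correct. But there is a genuine gap in the unit step, and it is precisely the point where the paper's Gr\"ossencharacter formulation does real work.

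You place $\phi_\gp$ and $\phi_\gq$ in $\Ocal_K$ and then assert that the resulting unit $u$ lies in $\Ocal^*=\{\pm1\}$. From elements of $\Ocal_K$ one only gets $u\in\Ocal_K^*$, not $u\in\Ocal^*$. Your two exclusions are aimed at $\Ocal^*$: the hypothesis $j(E)\ne0$ rules out $\End(E)=\ZZ[\o]$, and your $j=1728$ two-torsion argument rules out $\End(E)=\ZZ[i]$. Neither rules out the \emph{field} $K$ being $\QQ(\sqrt{-3})$ or $\QQ(i)$ with $\End(E)$ a non-maximal order. In those cases $\Ocal^*=\{\pm1\}$ but $\Ocal_K^*=\bfmu_6$ or $\bfmu_4$, and your argument as written does not pin $u$ down to $\pm1$. (Concretely: take $E/F$ with CM by $\ZZ[\sqrt{-3}]$, so $j(E)=54000\ne0$; nothing you have said prevents $u\in\bfmu_6\setminus\{\pm1\}$.) A related slip: the edge case $(p,q)=(7,3)$ does not force $j(E)=0$; it only forces $K=\QQ(\sqrt{-3})$, since the discriminant $a_\gp^2-4p=-3$ determines the field, not the order.

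The paper's device closes this gap cleanly. The Gr\"ossencharacter satisfies $\psi_E(\gp)\in\Ocal$ (not merely $\Ocal_K$), and its reduction is the Frobenius; hence $1-\psi_E(\gp)$ and $\psi_E(\gq)$ both lie in $\Ocal$. Since their norms equal the prime $q$, which one may take coprime to the conductor of $\Ocal$, they generate the same proper $\Ocal$-ideal up to conjugation, and the unit $u$ genuinely lies in $\Ocal^*$. Now your exclusions suffice: $j\ne0$ kills $\ZZ[\o]$, your $j=1728$ two-torsion argument kills $\ZZ[i]$, and every non-maximal order already has $\Ocal^*=\{\pm1\}$. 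If you prefer to stay with Frobenius, you can instead invoke the standard fact that for good ordinary reduction at a prime not dividing the conductor of $\Ocal$ the reduction map $\Ocal=\End(E)\to\End(\Etilde_\gp)$ is an isomorphism, so $\phi_\gp\in\Ocal$; but this is exactly the content you are currently missing.
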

%% *** Should we include the proof? As another appendix? Maybe
%% just in the arxiv version? The proof isn't hard, but probably need to
%% consider the cases $K\not\subset F$ and $K\subset F$ separately

It would be interesting to see to what extent the other results in
this paper are valid over number fields, including especially the
analysis of amicable pairs on curves with $j(E)=0$.

\end{remark}

%$%%%%%%%%%%%%%%%%%%%%%%%%%%%%%%%%%%%%%%%%%%%%%%%%%%%%%%%%%%%%%%%%%%%%%

\begin{acknowledgement}
The authors would like to thank Franz Lemmermeyer, Jonathan Wise, and
Soroosh Yazdani for their assistance.  The research in this note was
performed while the first author was a long-term visiting researcher
at Microsoft Research New England and included a short visit by the
second author. Both authors thank MSR for its hospitality during
their visits.
\end{acknowledgement}

%%%%%%%%%%%%%%%%%%%%%%%%%%%%%%%%%%%%%%%%%%%%%%%%%%%%%%%%%%%%%%%%%%%%%%

%% \begin{thebibliography}{99}
%% \itemsep=\smallskipamount
%% \end{thebibliography}

%% \bibliographystyle{abbrv}
%% \bibliography{ECAliquot}

%%%%%%%%%%%%%%%%%%%%%%%%%%%%%%%%%%%%%%%%%%%%%%%%%%%%%%%%%%%%%%%%%%%%%%
\appendix
%%%%%%%%%%%%%%%%%%%%%%%%%%%%%%%%%%%%%%%%%%%%%%%%%%%%%%%%%%%%%%%%%%%%%%

%%%%%%%%%%%%%%%%%%%%%%%%%%%%%%%%%%%%%%%%%%%%%%%%%%%%%%%%%%%%%%%%%%%%%%
\section{Curves with $j=0$ have no aliquot triples}
\label{appendix:j0aliqtriples}
In this section we use Corollary~\ref{corollary:j0explicit} and a
detailed case-by-case analysis to show that an elliptic curve with
$j=0$ has no normalized aliquot triples~$(p,q,r)$ with $p>7$. The
details are sufficiently intricate that it seems likely a different
argument would be needed to prove that there are no aliqout cycles of
length greater than three.

\begin{proposition}
Let $E/\QQ$ be an elliptic curve with $j(E)=0$. Then~$E$ has
no normalized aliquot triples $(p,q,r)$ with $p > 7$.
\end{proposition}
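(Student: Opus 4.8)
The plan is to translate the three aliquot relations into the language of the Gr\"ossencharacter and then exploit the rigidity forced by the sixth roots of unity in~$\Ocal_K$. Fix a prime $\gp_1\mid p$ and set $\a_i=\psi_E(\gp_i)$, where $\gp_{i+1}=\bigl(1-\psi_E(\gp_i)\bigr)\Ocal_K$ is the prime produced by Theorem~\ref{theorem:j0charctr}(b). Exactly as in the proof of Corollary~\ref{corollary:j0explicit}(b), each step has the shape $\a_{i+1}=\z_i(1-\a_i)$ with $\z_i\in\bfmu_6$ governed by sextic residue symbols, and for a genuine triple the cycle must close: either $1-\a_3=\eta\,\a_1$ (if $\#\Etilde_r(\FF_r)=p$ returns us to~$\gp_1$) or $1-\a_3=\eta\,\bar\a_1$ (if it returns to~$\bar\gp_1$), for some $\eta\in\bfmu_6$.

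First I would extract two structural constraints on the units. Theorem~\ref{theorem:j0charctr}(a) forces each $\a_i$ to reduce to an odd power of~$\o$ modulo~$3$, and since the reduction $\bfmu_6\xrightarrow{\sim}(\Ocal_K/3\Ocal_K)^*$ is injective, combining the congruences across a step gives $\z_i\in\bfmu_3$, and likewise $\eta\in\bfmu_3$. Moreover $\z_i=1$ would mean $\a_{i+1}=1-\a_i$, i.e.\ an amicable (length-two) step, which collapses the triple by forcing two of $p,q,r$ to coincide; hence $\z_1,\z_2\in\{\o^2,\o^4\}$.

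Next, substituting $\a_2=\z_1(1-\a_1)$ and $\a_3=\z_2(1-\a_2)$ into the closing relation treats the two closing cases separately. In the case $1-\a_3=\eta\,\a_1$ one obtains $1-\z_2+\z_1\z_2=(\eta+\z_1\z_2)\a_1$. Here $\z_1\z_2\in\bfmu_3$ and $\eta\in\bfmu_3$, so $\eta+\z_1\z_2\ne0$ (a sum of two cube roots of unity vanishes only when one is $-1$ times the other, impossible inside~$\bfmu_3$); thus $\a_1$ is a ratio of two short sums of sixth roots of unity, whence $p=\Norm_{K/\QQ}(\a_1)\le 9$, contradicting $p>7$. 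So this case is disposed of cleanly, and the entire difficulty concentrates in the conjugate-closing case.

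The hard part will be the case $1-\a_3=\eta\,\bar\a_1$. Writing $\a_1=\tfrac12(a_p+A\sqrt{-3})$, the closing relation becomes the single identity $1-\z_2+\z_1\z_2=\tfrac{a_p}{2}(\z_1\z_2+\eta)+\tfrac{A\sqrt{-3}}{2}(\z_1\z_2-\eta)$ in~$K$. The two vectors $\z_1\z_2+\eta$ and $\sqrt{-3}(\z_1\z_2-\eta)$ are always $\QQ$-parallel, so this equation has rank one over the reals: it cannot pin down $(a_p,A)$ and hence does \emph{not} bound~$p$ on its own. My plan is therefore to run through the finitely many admissible unit triples $(\z_1,\z_2,\eta)\in\{\o^2,\o^4\}^2\times\bfmu_3$; for each one the identity is either unsolvable, or solvable only along a line on which a direct trace computation shows that one of $a_p,a_q,a_r$ equals~$1$---equivalently $q=p$, $r=q$, or $r=p$---again violating distinctness. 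I expect this finite but delicate bookkeeping, reconciling the rank-one relation with the mod-$3$ congruences and the distinctness of the primes, to be the main obstacle, and the few small residual configurations that survive the unit analysis are precisely the ones cleared by the hypothesis $p>7$.
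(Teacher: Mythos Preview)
Your approach is sound and genuinely different from the paper's, and it does lead to a complete proof once the promised case check is carried out.

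The paper works entirely with the six explicit values from Corollary~\ref{corollary:j0explicit}: it writes $r$ as one of six expressions in $p,q,A_{p,q}$, iterates to write $s$ as one of six expressions in $q,r,A_{q,r}$, sets $s=p$, and eliminates $A_{p,q}$ and $A_{q,r}$ by squaring. This produces, in the worst case (Case~2B), four quartic polynomial equations in $p$ and $q$ whose leading forms are checked to be positive definite, forcing $p\le7$. No Gr\"ossencharacter values or units appear after Corollary~\ref{corollary:j0explicit}.

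Your route exploits more structure. The observation that the mod-$3$ congruence of Theorem~\ref{theorem:j0charctr}(a) forces $\z_1,\z_2,\eta\in\bfmu_3$ (and $\z_i\ne1$) is correct and cuts the branching substantially; the paper never isolates this constraint. Your non-conjugate closing case is clean: $\eta+\z_1\z_2\ne0$ since $-1\notin\bfmu_3$, the numerator $1-\z_2+\z_1\z_2$ has norm $4$ or $7$ and the denominator norm $1$ or $4$, so $p\le7$. In the conjugate case your parallelism observation is also correct (for $\z,\eta\in\bfmu_3$ with $\z\ne\eta$ one has $\sqrt{-3}(\z-\eta)=\pm3(\z+\eta)$), and the finite check you propose does succeed: of the twelve triples $(\z_1,\z_2,\eta)$, those with $\z_1\z_2=\eta$ are unsolvable because $1-\z_2$ has norm~$3$; those with $\z_1\ne\z_2$ give a non-real value for $a_p\pm3A$; and the two surviving triples $(\r,\r,1)$ and $(\r^2,\r^2,1)$ give $a_p\pm3A=4$, from which a one-line trace computation yields $a_q=\Tr\bigl(\z_1(1-\a_1)\bigr)=-1+\tfrac12(a_p\pm3A)=1$, hence $r=q$. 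So your expectation that ``one of $a_p,a_q,a_r$ equals~$1$'' is exactly right, and in fact it is always $a_q$; no residual small-$p$ configurations arise here---the hypothesis $p>7$ is used only in the non-conjugate branch.

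The trade-off: the paper's argument is more elementary (pure polynomial manipulation after Corollary~\ref{corollary:j0explicit}) but produces opaque quartics; yours requires the Gr\"ossencharacter formalism throughout but is structurally cleaner and makes visible \emph{why} the cases collapse. Your proposal as written is a correct plan rather than a finished proof---you should actually record the twelve-case table, since the conjugate branch is where all the content lies.
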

\begin{proof}
We use Corollary~\ref{corollary:j0explicit}, which says that if~$p$
and $q=\#\Etilde_p(\FF_p)$ are prime, then $r=\#\Etilde_q(\FF_q)$
takes one of six possible values. One of these six possible values
is~$p$, which is not allowed since we are assuming that~$p,q,r$ are
distinct. Hence~$r$ has one of the following forms,
\begin{align*}
  r &= 2q+2-p, &&\text{(Case 1)} \\
  r &= \frac{\pm(q+1-p)\pm3A_{p,q}}{2}, &&\text{(Case 2)} 
\end{align*}
where~$A_{x,y}$ satisfies
\[
  A_{x,y}^2 = \frac{4xy-(x+y-1)^2}{3}.
\]
(Of course, Case~2 is really four cases, depending on the choice of
signs.)
\par
For the moment letting $s=\#\Etilde_r(\FF_r)$, we can apply the same
reasoning to $(q,r,s)$ to deduce that
\begin{align*}
  s &= 2r+2-q, &&\text{(Case A)} \\
  s &= \frac{\pm(r+1-q)\pm3A_{q,r}}{2}. &&\text{(Case B)} 
\end{align*}
To ease notation, we let
\[
  F(x,y) = \frac{\pm(y+1-x)\pm3A_{x,y}}{2}.
\]
Then the two cases for~$r$ followed by the two cases for~$s$ give
four possibilities for~$s$ in terms of~$p$ and~$q$:
\begin{align*}
  s &= 3q+6-2p, &&\text{(Case 1A)} \\
  s &= 2F(p,q)+2-q, &&\text{(Case 2A)} \\
  s &= F(q,2q+2-p), &&\text{(Case 1B)} \\
  s &= F\bigl(q,F(p,q)\bigr). &&\text{(Case 2B)}
\end{align*}
(Of course, each case is really several cases depending on the choice of
signs for each occurrence of~$F$.)
\par
The assumption that~$(p,q,r)$ is an aliquot triple is equivalent to saying
that~$s=p$. Suppose first we are in Case~1A. Then $s=p$ is equivalent
to
\[
  3q+6-2p = p,
\]
so $p=q+2$. This contradicts our assumption that the triple is normalized,
i.e., that~$p$ is the smallest element of the triple. Hence Case~1A is 
not possible.
\par
Next we consider Case~2A. Then the assumption $s=p$ implies
that $2F(p,q)=p+q-2$. Using the definition of~$F$, this can be written as
\[
  \pm(q+1-p)\pm3A_{p,q} = p+q-2,
\]
which (using the definition of~$A$) implies that
\begin{equation}
  \label{eqn:2Apm}
  \bigl((p+q-2)\pm(q+1-p)\bigr)^2 = 9A_{p,q}^2 = 3\bigl(4pq-(p+q-1)^2\bigr).
\end{equation}
This gives two subcases, which we denote by~2A$^+$ and~2A$^-$ according
to the choice of sign. A little bit of algebra yields
\begin{align*}
 28p^2 -24pq +12q^2 - 72p  - 24q + 48&=0, &&\text{(Case 2A$^+$)} \\
 12p^2 - 24pq + 28q^2 - 24p - 40q + 16&=0. &&\text{(Case 2A$^-$)}
\end{align*}
Both of the functions on the left-hand sides
have leading quadratic forms that are  positive definite, so there are
only finitely many integral solutions~$(p,q)$. A more careful analysis
shows that the first is positive for $p>5$ and the second is positive
for $p>7$.
\par
Next comes Case~1B, where the assumption that~$s=p$ leads to
the formula
\[
  F(q,2q+2-p)=p.
\]
Writing this out in terms of $A_{q,2q+2-p}$, moving all the other terms
to the other side, squaring, and simplifying, we again get two cases
depending a choice of sign. Thus
\begin{align*}
 12p^2-12pq+4q^2-24p+12&=0, &&\text{(Case 1B$^+$)} \\
 4p^2-4pq+4q^2+12&=0. &&\text{(Case 1B$^-$)} 
\end{align*}
The quadratic function for Case 1B$^+$ is positive for $p>7$
and the  quadratic function for Case 1B$^-$ is positive for $p>0$.
\par
Finally we turn to Case~2B, which is somewhat more complicated because
it is given by the formula
\[
  F\bigl(q,F(p,q)\bigr) = p,
\]
which involves two iterations of the function~$F$. The signs on
the~$A_{x,y}$ terms are irrelevant since we square them, but the other
signs in the definition of~$F$ do affect the eventual equation.
After a bunch of algebra, we find that the~$p$ and~$q$ values for an
amicable triple coming from Case~2B must satisfy one of the following
equations.
\begin{align*}
   4p^4 + 2p^3q + 3p^2q^2 - pq^3 + q^4 - 6p^3 - 15p^2q  \\
    {}  - 15pq^2 + 3p^2 +  3pq + 3q^2 &=0
  &&\text{(Case 2B$^{++}$)}  \\
  9p^2q^2 - 9pq^3 + 9q^4  + 9p^2q - 27pq^2  + 3p^2 - 21pq \\
    {} - 3q^2    - 6p + 6q + 4 &=0
  &&\text{(Case 2B$^{+-}$)} \\
 3p^2q^2 - 3pq^3 + q^4  + 9p^2q - 9pq^2  + 9p^2 - 9pq + 3q^2 &=0
  &&\text{(Case 2B$^{-+}$)} \\
 4p^4 - 18p^3q + 33p^2q^2 - 27pq^3 + 9q^4 - 10p^3  + 33p^2q   \\
  {} - 21pq^2 + 21p^2 - 21pq - 3q^2 - 10p + 6q + 4 &= 0
  &&\text{(Case 2B$^{--}$)}
\end{align*}
All of these quartic functions are positive if $0<p<q$ with~$p$
sufficiently large. More precisely, it suffices to take $p>3$ for
Cases 2B$^{++}$ and 2B$^{+-}$, $p>4$ for Case 2B$^{-+}$ and $p>2$ for
Case 2B$^{--}$.
\par
This completes the proof that~$E$ has no aliquot triples.
\end{proof}

%%%%%%%%%%%%%%%%%%%%%%%%%%%%%%%%%%%%%%%%%%%%%%%%%%%%%%%%%%%%%%%%%%%%%%
\section{Proof of independence of the maps 
in Proposition~\ref{proposition:ECDcurves}}
\label{appendix:indepofmaps}
We give a proof for $\operatorname{char}(\FF)=0$.  Independence of
maps is geometric, so it suffices to prove independence for~$\g=\d=1$.
Let~$\psi_1,\ldots,\psi_4$ be the four maps in
Proposition~\ref{proposition:ECDcurves}(b), so
\begin{align*}
  \psi_1:C_6^{(1,1)} &\longrightarrow E^{(16)},
  &\psi_2:C_6^{(1,1)} &\longrightarrow E^{(4)},\\
  \psi_3:C_6^{(1,1)} &\longrightarrow E^{(1)},
  &\psi_4:C_6^{(1,1)} &\longrightarrow E^{(-1)}.
\end{align*}
We compose these maps with untwisting maps~$E^{(\k)}\to E^{(1)}$, so we get
four maps
\[
  \begin{CD}
    \f_1 : C_6^{(1,1)} @>{\psi_1}>> E^{(16)}
       @>{(x,y)\to
   \smash[t]{\left(\frac{1}{2\sqrt[3]{2}} x,\frac14y\right)}}>> E^{(1)}, \\
    \f_2 : C_6^{(1,1)} @>{\psi_2}>> E^{(4)}
       @>{(x,y)\to\left(\frac{1}{\sqrt[3]{4}} x,\frac12y\right)}>> E^{(1)}, \\
    \f_3 : C_6^{(1,1)} @>{\psi_3}>> E^{(1)}
       @>{(x,y)\to(x,y)}>> E^{(1)}, \\
    \f_4 : C_6^{(1,1)} @>{\psi_4}>> E^{(-1)}
       @>{(x,y)\to(-x,iy)}>> E^{(1)}.
  \end{CD}
\]
The maps~$\psi_1,\ldots,\psi_4$ are defined over~$\QQ$, but the
maps~$\f_1,\ldots,\f_4$ are only defined over~$\Qbar$, not~$\QQ$. We
consider the action of $\Gal(\Qbar/\QQ)$ on these maps. To do this, we
choose elements $\s,\t\in\Gal(\Qbar/\QQ)$ satisfying
\begin{align*}
  \s(\sqrt[3]{2})&=\rho\sqrt[3]{2},
  &\s(i)&=i,\\
  \t(\sqrt[3]{2})&=\sqrt[3]{2},
  &\t(i)&=-i.
\end{align*}
Here $\rho=\frac12(-1+\sqrt{-3})$ is a fixed primitive cube root of
unity.  We also note that~$\bfmu_3$ acts on~$E^{(1)}$ via $[\r](x,y) =
(\r x,y)$. Looking at the explicit formulas for~$\f_1,\ldots,\f_4$, we
find that
\begin{align*}
  \f_1^\s &= [\r^2]\circ\f_1, 
  &\f_2^\s &= [\r]\circ\f_2, 
  &\f_3^\s &= \f_3, 
  &\f_4^\s &= \f_4, \\*
  \f_1^\t &= \f_1, 
  & \f_2^\t &= \f_2, 
  & \f_3^\t &= \f_3, 
  & \f_4^\t &= [-1]\circ\f_4.
\end{align*}
Now suppose that we have a relation
\begin{equation}
  \label{eqn:n1f1n4f40}
  [n_1]\circ\f_1 + [n_2]\circ\f_2 + [n_3]\circ\f_3 + [n_4]\circ\f_4 = 0.
\end{equation}
Applying the transformation~$\t$ to~\eqref{eqn:n1f1n4f40} has the
effect of replacing~$\f_4$ by~$[-1]\circ\f_4$, so subtracting the two
equations yields $[2n_4]\circ\f_4=0$. Since the map
$\f_4:C_6^{(1,1)}\to E^{(1)}$ is a finite map, it follows
that~$n_4=0$.
\par 
Applying~$\s$ and~$\s^2$ to~\eqref{eqn:n1f1n4f40}, we end
up with three equations
\begin{align}
  \label{eqn:n1f1n4f40a}
  [n_1]\circ\f_1 + [n_2]\circ\f_2 + [n_3]\circ\f_3  &= 0, \\
  \label{eqn:n1f1n4f40b}
  [n_1]\circ[\r^2]\circ\f_1 + [n_2]\circ[\r]\circ\f_2 + [n_3]\circ\f_3  &= 0, \\
  \label{eqn:n1f1n4f40c}
  [n_1]\circ[\r]\circ\f_1 + [n_2]\circ[\r^2]\circ\f_2 + [n_3]\circ\f_3  &= 0.
\end{align}
Adding~\eqref{eqn:n1f1n4f40a},~\eqref{eqn:n1f1n4f40b},
and\eqref{eqn:n1f1n4f40c} and using $1+\r+\r^2=0$
gives~$[3n_3]\circ\f_3=0$, which implies that $n_3=0$.  Similarly,
adding~\eqref{eqn:n1f1n4f40a} to~$[\r]$ times~\eqref{eqn:n1f1n4f40b}
to~$[\r^2]$ times~\eqref{eqn:n1f1n4f40c} gives~$[3n_1]\circ\f_1=0$,
so~$n_1=0$. Finally, since $n_1=n_3=0$, the
equation~\eqref{eqn:n1f1n4f40a} gives~$n_2=0$. This completes the
proof that~$\f_1,\ldots,\f_4$ are independent.

%%%%%%%%%%%%%%%%%%%%%%%%%%%%%%%%%%%%%%%%%%%%%%%%%%%%%%%%%%%%%%%%%%%%%%
\section{Evaluating the formula for $M_k^{[1]}$ when $k\equiv1\pmod3$}
\label{appendix:pariprogram}
In this appendix we give the PARI~\cite{PARI} script that we used to
compute $M_k^{[1]}(S,1)$ via the formulas~\eqref{eqn:splitct1},
\eqref{eqn:splitct2}, \eqref{eqn:splitct3}, and~\eqref{eqn:splitct4}.
In these formulas we treat~$k$,~$\pi$, and~$\e$ as indeterminates and
formally set $\bar\pi=k/\pi$ and $\bar\e=1/\e$.  The value of
$M_k^{[1]}(S,1)$ turns out to be independent of $k\bmod4$ and is
always a quadratic polynomial in~$k$.  The output from the routine
\texttt{TestFormulaForSplitPrimes} is given in
Table~\ref{table:parioutput}, which was calculated using the following
PARI program.

\begin{table}[ht]
{\small
\begin{align*}
  k\equiv 1 \pmod{4} \\
  \#M_k^{[1]}(\o^0,1) &= (1/18)(k^2 + 4k + 13) \\
  \#M_k^{[1]}(\o^1,1) &= (1/18)(k^2 - 2k + 1) \\
  \#M_k^{[1]}(\o^2,1) &= (1/18)(k^2 - 2k + 1) \\
  \#M_k^{[1]}(\o^3,1) &= (1/18)(k^2 - 8k + 7) \\
  \#M_k^{[1]}(\o^4,1) &= (1/18)(k^2 - 2k + 1) \\
  \#M_k^{[1]}(\o^5,1) &= (1/18)(k^2 - 2k + 1) \\
  \\
  k\equiv 3 \pmod{4} \\
  \#M_k^{[1]}(\o^0,1) &= (1/18)(k^2 + 4k + 13) \\
  \#M_k^{[1]}(\o^1,1) &= (1/18)(k^2 - 2k + 1) \\
  \#M_k^{[1]}(\o^2,1) &= (1/18)(k^2 - 2k + 1) \\
  \#M_k^{[1]}(\o^3,1) &= (1/18)(k^2 - 8k + 7) \\
  \#M_k^{[1]}(\o^4,1) &= (1/18)(k^2 - 2k + 1) \\
  \#M_k^{[1]}(\o^5,1) &= (1/18)(k^2 - 2k + 1)
\end{align*}
\begin{align*}
  \#M_k^{[1]}\bigl(\{\o^1,\o^5\},1\bigr)
     &= (1/9)(k^2 - 2k + 1) \\
  \#M_k^{[1]}\bigl(\{\o^1,\o^3,\o^5\},1\bigr)
     &= (1/6)(k^2 - 4k + 3) \\
  \#M_k^{[1]}\bigl(\{\o^1,\o^2,\o^4,\o^5\},1\bigr)
     &= (2/9)(k^2 - 2k + 1) \\
  \#M_k^{[1]}\bigl(\{\o^0,\o^1,\o^2,\o^3,\o^4,\o^5\},1\bigr)
     &= (1/3)(k^2 - 2k + 4)
\end{align*}
\par}
\caption{Results of computing of $M_k^{[1]}(S,1)$ using PARI}
\label{table:parioutput}
\end{table}

{\tiny
\begin{verbatim}
/* The function TestFormulaForSplitPrimes computes M_k^{[1]} 
   for k = 1 (mod 3), using the formula as a sum of products of #C - e terms.
   The following global variables must be left as indeterminates:
   k, pi, e2
   Here e2 represents the cubic residue of 2 modulo pi.
   The conjugates of these are given by
   pibar = k/pi  and   e2bar = 1/e2.
   Further, w is assigned the value quadgen(-3), and wbar = conj(w) = 1/w
*/

{
TestFormulaForSplitPrimes() =
  local(m,ISets,IndexSet);
  print("\\begin{align*}");
  forstep(kmod4 = 1, 3, 2,
    print("  k\\equiv ",kmod4," \\pmod{4} \\\\");
    for (i = 0, 5,
      m = MMSum([i],kmod4);
      print1("  \\#M_k^{[1]}(\\o^",i,",1) &= ");
      print1("(",content(content(m)),")(",m/content(content(m)),")");
      if (i < 5 || kmod4 == 1, print(" \\\\"), print);
    );
    if (kmod4 == 1, print("  \\\\"));
  );
  print("\\end{align*}");
  print("\\begin{align*}");
  ISets = [[1,5], [1,3,5], [1,2,4,5], [0,1,2,3,4,5]];
  for (j = 1, #ISets,
    IndexSet = ISets[j];
    m = MMSum(IndexSet,1);
    print1("  \\#M_k^{[1]}\\bigl(\\{");
    for (i = 1, #IndexSet,
      print1("\\o^",IndexSet[i]);
      if (i < #IndexSet, print1(","));
    );
    print1("\\},1\\bigr)\n     &= (",content(content(m)),")");
    print1("(",m/content(content(m)),")");
    if (j < #ISets, print(" \\\\"), print);
  );
  print("\\end{align*}");
}

w = quadgen(-3);
wbar = conj(w);
pibar = k/pi;
e2bar = 1/e2;

{
CC(u,v,kmod4) =
  if (kmod4 == 0, error("k mod 4 must be 1 or 3"));
  k + 1
     + w^(2*v)*pibar + wbar^(2*v)*pi
     + e2^2*w^(3*u+4*v)*pibar + e2bar^2*wbar^(3*u+4*v)*pi
     + e2*w^(5*u+2*v)*pibar + e2bar*wbar^(5*u+2*v)*pi
     + (-1)^((kmod4-1)/2)*e2*w^(u+2*v)*pibar 
     + (-1)^((kmod4-1)/2)*e2bar*wbar^(u+2*v)*pi;
}

{
ee(u,v) =
  if (u % 6 == 0, 6, 0)
    + if((u-v) % 3 == 0, 3, 0)  + if((2*u-v) % 3 == 0, 3, 0);
}

MM(u,v,kmod4) = (1/18) * (CC(u,v,kmod4) - ee(u,v));

{
MMSum(IndexSet,kmod4) =
  local(i,s);
  s = 0;
  for (j = 1, #IndexSet,
    i = IndexSet[j];
    for (u = 0, 5,
    for (v = 0, 2,
      s = s + MM(u,v,kmod4)*MM(u-i,v,kmod4);
    );
    );
  );
  return(s);
}
\end{verbatim}
}

%%%%%%%%%%%%%%%%%%%%%%%%%%%%%%%%%%%%%%%%%%%%%%%%%%%%%%%%%%%%%%%%%%%%%%
\section{Amicable pairs for $y^2+y=x^3+x^2$ up to $10^{11}$}
\label{appendix:pairsoncond43}
We used PARI-GP~\cite{PARI} to compute all normalized amicable pairs
$(p,q)$ on the curve $y^2+y=x^3+x^2$ with $p<10^{11}$.
The list is given in Table~\ref{table:amicpair43list}.

\begin{table}[ht]
\begin{center}
\small
\begin{tabular}{|r|r|} \hline
 (853,883) & (77761,77999) \\
 (1147339,1148359) & (1447429,1447561) \\
 (82459561,82471789) & (109165543,109180121) \\
 (253185307,253194619) & (320064601,320079131) \\
 (794563993,794571803) & (797046407,797057473) \\
 (2185447367,2185504261) & (2382994403,2383029443) \\
 (4101180511,4101190039) & (4686466159,4686510971) \\
 (5293671709,5293749623) & (6677602471,6677694539) \\
 (7074693823,7074704971) & (7806306133,7806380963) \\
 (9395537549,9395559011) & (9771430993,9771433303) \\
 (9849225103,9849306373) & (10574564857,10574619851) \\
 (12657210407,12657303353) & (13003880317,13003900901) \\
 (13789895011,13790023199) & (14436076927,14436180091) \\
 (14976551207,14976590371) & (15597047659,15597075937) \\
 (15679549877,15679688491) & (16322301811,16322366867) \\
 (17725049203,17725142719) & (17841395323,17841406601) \\
 (31615097957,31615194739) & (33266376239,33266419807) \\
 (33963999907,33964128017) & (34525477799,34525684639) \\
 (39287748091,39287808559) & (40136806357,40137038941) \\
 (46438194193,46438453213) & (51838270219,51838493561) \\
 (51881025571,51881167549) & (52011956957,52012184953) \\
 (55823622193,55823919169) & (57920520199,57920640709) \\
 (62765305697,62765625749) & (62995853671,62996152237) \\
 (66252308051,66252349753) & (67177409329,67177631771) \\
 (69449506103,69449741239) & (75002612911,75002660263) \\
 (77264683829,77264993327) & (77635421531,77635670141) \\
 (79067605783,79067881429) & (81263083703,81263204563) \\
 (94248260597,94248586591) &  \\
  \hline
\end{tabular}
\end{center}
\caption{Amicable pairs for $y^2+y=x^3+x^2$ up to $10^{11}$}
\label{table:amicpair43list}
%% The pari routine Cond43AmicablePairs() prints the data
\end{table}

\end{document}